\date{\today}
\newtheorem{theorem}{Theorem}[section]
\newtheorem{lemma}[theorem]{Lemma}
\newtheorem{proposition}[theorem]{Proposition}
\newtheorem{corollary}[theorem]{Corollary}
\theoremstyle{definition}
\newtheorem{definition}[theorem]{Definition}
\theoremstyle{remark}
\newtheorem{remark}[theorem]{Remark}
\numberwithin{equation}{section}
\begin{document}

\title[On a locally compact monoid of cofinite partial isometries of $\mathbb{N}$ with adjoined zero]{On a locally compact monoid of cofinite partial isometries of $\mathbb{N}$ with adjoined zero}

\author[O.~Gutik and P.~Khylynskyi]{Oleg~Gutik and Pavlo~Khylynskyi}
\address{Faculty of Mathematics, Ivan Franko University of Lviv, Universytetska 1, Lviv, 79000, Ukraine}
\email{oleg.gutik@lnu.edu.ua, ogutik@gmail.com, pavlo.khylynskyi@lnu.edu.ua}

\keywords{Partial isometry, inverse semigroup, partial bijection, bicyclic monoid, discrete, locally compact, topological semigroup, semitopological semigroup}

\subjclass[2020]{20M18, 20M20, 20M30, 22A15, 54A10, 54D45}

\begin{abstract}
Let $\mathscr{C}_\mathbb{N}$ be a monoid which is generated by the partial shift $\alpha\colon n\mapsto n+1$ of the set of positive integers $\mathbb{N}$ and its inverse partial shift $\beta\colon n+1\mapsto n$.
In this paper  we prove that if $S$ is a submonoid of the monoid $\mathbf{I}\mathbb{N}_{\infty}$ of all partial cofinite isometries of positive integers which contains $\mathscr{C}_\mathbb{N}$ as a submonoid then every Hausdorff locally compact shift-continuous topology on $S$ with adjoined zero is either compact or discrete. Also we show that the similar statement holds for a locally compact semitopological semigroup $S$ with  an adjoined compact ideal.
\end{abstract}

\maketitle


\section{Introduction and preliminaries}

In this paper we shall follow the terminology of \cite{Carruth-Hildebrant-Koch-1983, Carruth-Hildebrant-Koch-1986, Clifford-Preston-1961, Clifford-Preston-1967, Engelking-1989, Lawson-1998, Ruppert-1984}. The cardinality of a set $X$ is denoted by $|X|$.
By $\mathbb{N}$ and $\mathbb{Z}$ we denote the sets of positive integers and the set of all integers. Also we identify $\omega$ with the set $\mathbb{N}\cup\{0\}$.

For any subset $A$ of $\mathbb{N}$ and any $i\in\mathbb{N}$ we denote $i+A=\{i+k\colon k\in A\}$ when $A\neq\varnothing$ and $i+\varnothing=\varnothing$.

If $S$ is a semigroup, then we shall denote the subset of all
idempotents in $S$ by $E(S)$. If $S$ is an inverse semigroup, then
$E(S)$ is closed under multiplication and we shall refer to $E(S)$ as a
\emph{semilattice} (or the \emph{semilattice of} $S$).

A semigroup $S$ is called {\it inverse} if for any
element $x\in S$ there exists a unique $x^{-1}\in S$ such that
$xx^{-1}x=x$ and $x^{-1}xx^{-1}=x^{-1}$. The element $x^{-1}$ is
called the {\it inverse of} $x\in S$. If $S$ is an inverse
semigroup, then the function $\operatorname{inv}\colon S\to S$
which assigns to every element $x$ of $S$ its inverse element
$x^{-1}$ is called the {\it inversion}.
On an inverse semigroup $S$ the semigroup operation  determines the following partial order $\preccurlyeq$: $s\preccurlyeq t$ if and only if there exists $e\in E(S)$ such that $s=te$. This order is
called the {\em natural partial order} on $S$ and it induces the {\em natural partial order} on the semilattice $E(S)$ \cite{Wagner-1952}.
An inverse subsemigroup $T$ of an inverse semigroup $S$ is called \emph{full} if $E(T)=E(S)$.

A congruence $\mathfrak{C}$ on a semigroup $S$ is called a \emph{group congruence} if the quotient semigroup $S/\mathfrak{C}$ is a group. Any inverse semigroup $S$ admits the \emph{minimum group congruence} $\mathfrak{C}_{\mathbf{mg}}$:
\begin{equation*}
  a\mathfrak{C}_{\mathbf{mg}}b \quad \hbox{if and only if} \quad \hbox{there exists} \quad e\in E(S) \quad \hbox{such that} \quad ea=eb.
\end{equation*}
Also, we say that a semigroup homomorphism $\mathfrak{h}\colon S\to T$ is a \emph{group homomorphism} if the image $(S)\mathfrak{h}$ is a group, and $\mathfrak{h}\colon S\to T$ is \emph{trivial} if it is either an isomorphism or annihilating.

The bicyclic monoid ${\mathscr{C}}(p,q)$ is the semigroup with the identity $1$ generated by two elements $p$ and $q$ subjected only to the condition $pq=1$. The semigroup operation on ${\mathscr{C}}(p,q)$ is determined as
follows:
\begin{equation*}
    q^kp^l\cdot q^mp^n=q^{k+m-\min\{l,m\}}p^{l+n-\min\{l,m\}}.
\end{equation*}
It is well known that the bicyclic monoid ${\mathscr{C}}(p,q)$ is a bisimple (and hence simple) combinatorial $E$-unitary inverse semigroup and every non-trivial congruence on ${\mathscr{C}}(p,q)$ is a group congruence \cite{Clifford-Preston-1961}.

If $\alpha\colon X\rightharpoonup Y$ is a partial map, then we shall denote the domain and the range of $\alpha$ by $\operatorname{dom}\alpha$ and $\operatorname{ran}\alpha$, respectively. A partial map $\alpha\colon X\rightharpoonup Y$ is called \emph{cofinite} if both sets $X\setminus\operatorname{dom}\alpha$ and $Y\setminus\operatorname{ran}\alpha$ are finite.

The \emph{symmetric inverse} (\emph{monoid}) \emph{semigroup} $\mathscr{I}_\lambda$ over a cardinal $\lambda$ is the set of all partial one-to-one transformations of a non-zero  cardinal $\lambda$ endowed with the semigroup operation of composition of relations \cite{Clifford-Preston-1961}. The symmetric inverse semigroup was introduced by Wagner~\cite{Wagner-1952} and it plays a major role in the theory of semigroups. By $\mathscr{I}^{\mathrm{cf}}_\lambda$ is denoted a subsemigroup of injective partial selfmaps of $\lambda$ with
cofinite domains and ranges. Obviously, $\mathscr{I}^{\mathrm{cf}}_\lambda$ is an inverse
submonoid of the semigroup $\mathscr{I}_\lambda$. The
semigroup $\mathscr{I}^{\mathrm{cf}}_\lambda$  is called the \emph{monoid of
injective partial cofinite selfmaps} of $\lambda$ \cite{Gutik-Repovs-2015}.


A partial transformation $\alpha\colon (X,d)\rightharpoonup (X,d)$ of a metric space $(X,d)$ is called \emph{isometric} or a \emph{partial isometry}, if $d((x)\alpha,(y)\alpha)=d(x,y)$ for all $x,y\in \operatorname{dom}\alpha$. It is obvious that the set of partial cofinite isometries of a metric space $(X,d)$ with the operation of the composition of partial isometries is an inverse submonoid of the monoid of injective partial cofinite selfmaps of the cardinal $|X|$.


We endow the sets $\mathbb{N}$ and $\mathbb{Z}$ with the standard linear order.

The semigroup $\mathbf{ID}_{\infty}$ of all partial cofinite isometries of the set of integers $\mathbb{Z}$ with the usual metric $d(n,m)=|n-m|$, $n,m\in \mathbb{Z}$, was studied in  \cite{Bezushchak-2004, Bezushchak-2008, Gutik-Savchuk-2017}.

Let $\mathbf{I}\mathbb{N}_{\infty}$ be the set of all partial cofinite isometries of the set of positive integers $\mathbb{N}$ with the usual metric $d(n,m)=|n-m|$, $n,m\in \mathbb{N}$. Then $\mathbf{I}\mathbb{N}_{\infty}$ with the operation of composition of partial isometries is an inverse submonoid of $\mathscr{I}^{\mathrm{cf}}_\omega$. The semigroup $\mathbf{I}\mathbb{N}_{\infty}$ of all partial cofinite isometries of positive integers is studied in \cite{Gutik-Savchuk-2018}. There we described the Green relations on the semigroup $\mathbf{I}\mathbb{N}_{\infty}$, its band and proved that $\mathbf{I}\mathbb{N}_{\infty}$ is a simple $E$-unitary $F$-inverse semigroup \cite{Lawson-1998}. Also in \cite{Gutik-Savchuk-2018}, the least group congruence $\mathfrak{C}_{\mathbf{mg}}$ on $\mathbf{I}\mathbb{N}_{\infty}$ is described and there it is proved that the quotient-semigroup  $\mathbf{I}\mathbb{N}_{\infty}/\mathfrak{C}_{\mathbf{mg}}$ is isomorphic to the additive group of integers $\mathbb{Z}(+)$. In \cite{Gutik-Savchuk-2018} an example of a non-group congruence on the semigroup $\mathbf{I}\mathbb{N}_{\infty}$ is presented. Also it is  proved that a congruence on the semigroup $\mathbf{I}\mathbb{N}_{\infty}$ is a group congruence if and only if its restriction onto any isomorphic  copy of the bicyclic semigroup in $\mathbf{I}\mathbb{N}_{\infty}$ is a group congruence.
In \cite{Gutik-Savchuk-2020} it was shown that  every non-annihilating homomorphism $\mathfrak{h}\colon \mathbf{I}\mathbb{N}_{\infty}\to\mathbf{ID}_{\infty}$ has the following property:   the image $(\mathbf{I}\mathbb{N}_{\infty})\mathfrak{h}$ is isomorphic  either to the cyclic group $\mathbb{Z}_2$ of  order $2$ or the additive group of integers $\mathbb{Z}(+)$. Also it is proved that  $\mathbf{I}\mathbb{N}_{\infty}$ does not have a finite set of generators, and moreover it does not contain a minimal generating set.

Later by $\mathbb{I}$ we denote the unit element of $\mathbf{I}\mathbb{N}_{\infty}$.

\begin{remark}\label{remark-1.1}
We observe that the bicyclic semigroup is isomorphic to the
semigroup $\mathscr{C}_{\mathbb{N}}$ which is
generated by partial transformations $\alpha$ and $\beta$ of the set
of positive integers $\mathbb{N}$, defined as follows:
\begin{equation*}
\operatorname{dom}\alpha=\mathbb{N}, \qquad \operatorname{ran}\alpha=\mathbb{N}\setminus\{1\},  \qquad (n)\alpha=n+1
\end{equation*}
and
\begin{equation*}
\operatorname{dom}\beta=\mathbb{N}\setminus\{1\}, \qquad \operatorname{ran}\beta=\mathbb{N},  \qquad (n)\beta=n-1
\end{equation*}
(see Exercise~IV.1.11$(ii)$ in \cite{Petrich-1984}). It is obvious that $\mathbb{I}=\alpha\beta=\alpha^0=\beta^0$ and $\mathscr{C}_{\mathbb{N}}$ is a submonoid  of $\mathbf{I}\mathbb{N}_{\infty}$.
\end{remark}

The \emph{semigroup of cofinite monotone} (\emph{order preserving}) \emph{bijective partial transformations of} $\mathbb{N}$   was introduced in \cite{Gutik-Repovs-2011} and there it was denoted by $\mathscr{I}_{\infty}^{\!\nearrow}(\mathbb{N})$. Obviously, $\mathscr{I}_{\infty}^{\!\nearrow}(\mathbb{N})$ is an inverse subsemigroup of the semigroup $\mathscr{I}^{\mathrm{cf}}_\omega$.  In \cite{Gutik-Repovs-2011} Gutik and Repov\v{s} studied properties of the semigroup $\mathscr{I}_{\infty}^{\!\nearrow}(\mathbb{N})$. In particular, they showed that $\mathscr{I}_{\infty}^{\!\nearrow}(\mathbb{N})$ is an inverse bisimple semigroup and all of its non-trivial group homomorphisms are either isomorphisms or group homomorphisms. It is obvious that $\mathbf{I}\mathbb{N}_{\infty}$ is an inverse submonoid of $\mathscr{I}_{\infty}^{\!\nearrow}(\mathbb{N})$ \cite{Gutik-Savchuk-2018}.

A partial map $\alpha\colon \mathbb{N}\rightharpoonup \mathbb{N}$ is
called \emph{almost monotone} if there exists a finite subset $A$ of
$\mathbb{N}$ such that the restriction
$\alpha\mid_{\mathbb{N}\setminus A}\colon \mathbb{N}\setminus
A\rightharpoonup \mathbb{N}$ is a monotone partial map.
By $\mathscr{I}_{\infty}^{\,\Rsh\!\!\!\nearrow}(\mathbb{N})$ we
shall denote the \emph{semigroup of cofinite almost monotone
injective partial transformations of} $\mathbb{N}$.
Obviously, $\mathscr{I}_{\infty}^{\,\Rsh\!\!\!\nearrow}(\mathbb{N})$
is an inverse subsemigroup of the semigroup $\mathscr{I}^{\mathrm{cf}}_\omega$ and
the semigroup $\mathscr{I}_{\infty}^{\!\nearrow}(\mathbb{N})$ is an
inverse subsemigroup of
$\mathscr{I}_{\infty}^{\,\Rsh\!\!\!\nearrow}(\mathbb{N})$, too.
The semigroup $\mathscr{I}_{\infty}^{\,\Rsh\!\!\!\nearrow}(\mathbb{N})$ is studied in \cite{Chuchman-Gutik-2010}. In particular, it is shown that the semigroup
$\mathscr{I}_{\infty}^{\,\Rsh\!\!\!\nearrow}(\mathbb{N})$ is inverse,
bisimple and all of its non-trivial group homomorphisms are either
isomorphisms or group homomorphisms.  In the paper \cite{Gutik-Savchuk-2019} we showed that every automorphism of a full inverse subsemigroup of $\mathscr{I}_{\infty}^{\!\nearrow}(\mathbb{N})$ which contains the semigroup $\mathscr{C}_{\mathbb{N}}$ is the identity map. Also there we  constructed a submonoid $\mathbf{I}\mathbb{N}_{\infty}^{[\underline{1}]}$ of $\mathscr{I}_{\infty}^{\,\Rsh\!\!\!\nearrow}(\mathbb{N})$ with the following property: if $S$ be an inverse subsemigroup of $\mathscr{I}_{\infty}^{\,\Rsh\!\!\!\nearrow}(\mathbb{N})$ such that $S$ contains $\mathbf{I}\mathbb{N}_{\infty}^{[\underline{1}]}$ as a submonoid, then every non-identity congruence $\mathfrak{C}$ on $S$ is a group congruence. We show that if $S$ is an inverse submonoid of $\mathscr{I}_{\infty}^{\,\Rsh\!\!\!\nearrow}(\mathbb{N})$ such that $S$ contains $\mathscr{C}_{\mathbb{N}}$ as a submonoid then $S$ is simple and the quotient semigroup $S/\mathfrak{C}_{\mathbf{mg}}$  is isomorphic to the additive group of integers. Topologizations of inverse submonoids of $\mathscr{I}_{\infty}^{\,\Rsh\!\!\!\nearrow}(\mathbb{N})$  and embeddings of  such semigroups into compact-like topological semigroups are investigated in \cite{Chuchman-Gutik-2010, Gutik-Savchuk-2019}. Similar results for semigroups of cofinite almost monotone partial
bijections and cofinite almost monotone partial bijections of $\mathbb{Z}$ were obtained in \cite{Gutik-Repovs-2012}.

For an arbitrary positive integer $n_0$ we denote $[n_0)=\left\{n\in\mathbb{N}\colon n\geqslant n_0\right\}$. Since the set of all positive integers is well-ordered, the definition of the semigroup $\mathscr{I}_{\infty}^{\,\Rsh\!\!\!\nearrow}(\mathbb{N})$ implies that for every $\gamma\in\mathscr{I}_{\infty}^{\,\Rsh\!\!\!\nearrow}(\mathbb{N})$ there exists the smallest positive integer $n_{\gamma}^{\mathbf{d}}\in\operatorname{dom}\gamma$ such that the restriction $\gamma|_{\left[n_{\gamma}^{\mathbf{d}}\right)}$ of the partial map $\gamma\colon \mathbb{N}\rightharpoonup \mathbb{N}$ onto the set $\left[n_{\gamma}^{\mathbf{d}}\right)$ is an element of the semigroup $\mathscr{C}_{\mathbb{N}}$, i.e., $\gamma|_{\left[n_{\gamma}^{\mathbf{d}}\right)}$ is a shift of $\left[n_{\gamma}^{\mathbf{d}}\right)$. For every $\gamma\in\mathscr{I}_{\infty}^{\,\Rsh\!\!\!\nearrow}(\mathbb{N})$ we put $\overrightarrow{\gamma}=\gamma|_{\left[n_{\gamma}^{\mathbf{d}}\right)}$, i.e.
\begin{equation*}
\operatorname{dom}\overrightarrow{\gamma}=\big[n_{\gamma}^{\mathbf{d}}\big), \quad (x)\overrightarrow{\gamma}=(x)\gamma \quad \hbox{for all} \; x\in \operatorname{dom}\overrightarrow{\gamma} \quad \hbox{and} \quad \operatorname{ran}\overrightarrow{\gamma}=\left(\operatorname{dom}\overrightarrow{\gamma}\right)\gamma.
\end{equation*}
Also, we put
\begin{equation*}
\underline{n}_{\gamma}^{\mathbf{d}}=\min\operatorname{dom}\gamma \qquad \hbox{for} \quad \gamma\in\mathscr{I}_{\infty}^{\,\Rsh\!\!\!\nearrow}(\mathbb{N}).
\end{equation*}
It is obvious that $\underline{n}_{\gamma}^{\mathbf{d}}= n_{\gamma}^{\mathbf{d}}$ when $\gamma\in\mathscr{C}_{\mathbb{N}}$, and $\underline{n}_{\gamma}^{\mathbf{d}}< n_{\gamma}^{\mathbf{d}}$ when $\gamma\in\mathscr{I}_{\infty}^{\,\Rsh\!\!\!\nearrow}(\mathbb{N})\setminus\mathscr{C}_{\mathbb{N}}$. Also for any $\gamma\in\mathbf{I}\mathbb{N}_{\infty}$ we denote
\begin{equation*}
  \underline{n}_{\gamma}^{\mathbf{r}}=(\underline{n}_{\gamma}^{\mathbf{d}})\gamma=\min\operatorname{ran}\gamma \qquad \hbox{and} \qquad n_{\gamma}^{\mathbf{r}}=(n_{\gamma}^{\mathbf{d}})\gamma.
\end{equation*}

The results of Section~3 of \cite{Gutik-Savchuk-2020} imply that $n_{\gamma}^{\mathbf{r}}-\underline{n}_{\gamma}^{\mathbf{r}}=n_{\gamma}^{\mathbf{d}}-\underline{n}_{\gamma}^{\mathbf{d}}$ for any $\gamma\in\mathbf{I}\mathbb{N}_{\infty}$, and moreover for any non-negative integer $k$ we have that
\begin{equation*}
  \mathbf{I}\mathbb{N}_{\infty}^{[k]}=\big\{\gamma\in \mathbf{I}\mathbb{N}_{\infty}\colon n_{\gamma}^{\mathbf{d}}-\underline{n}_{\gamma}^{\mathbf{d}}\leqslant k\big\}
\end{equation*}
is a simple  inverse subsemigroup of $\mathbf{I}\mathbb{N}_{\infty}$ such that $\mathbf{I}\mathbb{N}_{\infty}$ admits the following infinite semigroup series
\begin{equation*}
\mathscr{C}_{\mathbb{N}}=\mathbf{I}\mathbb{N}_{\infty}^{[0]}= \mathbf{I}\mathbb{N}_{\infty}^{[1]}\subsetneqq \mathbf{I}\mathbb{N}_{\infty}^{[2]}\subsetneqq \mathbf{I}\mathbb{N}_{\infty}^{[3]}\subsetneqq \cdots \subsetneqq \mathbf{I}\mathbb{N}_{\infty}^{[k]}\subsetneqq \cdots \subset \mathbf{I}\mathbb{N}_{\infty}.
\end{equation*}

For any positive integer $k$ the semigroup $\mathbf{I}\mathbb{N}_{\infty}^{[k]}$ is called the \emph{monoid of cofinite isometries of positive integers with the noise} $k$.

A topological space $X$ is called:
\begin{itemize}
  \item \emph{Baire} if for each sequence $A_1, A_2,\ldots, A_i,\ldots$ of nowhere dense subsets of $X$ the union $\displaystyle\bigcup_{i=1}^\infty A_i$ is a co-dense subset of $X$~\cite{Engelking-1989, Haworth-McCoy-1977};
  \item \emph{locally compact} if every point $x$ of $X$ has an open neighbourhood with the compact closure.
\end{itemize}

A (\emph{semi})\emph{topological} \emph{semigroup} is a topological space endowed with a (separately) continuous semigroup operation. An inverse topological semigroup with continuous inversion is called a \emph{topological inverse semigroup}.

A topology $\tau$ on a semigroup $S$ is called:
\begin{itemize}
  \item a \emph{semigroup} topology if $(S,\tau)$ is a topological semigroup;
   \item an \emph{inverse semigroup} topology if $(S,\tau)$ is a topological inverse semigroup;
  \item a \emph{shift-continuous} topology if $(S,\tau)$ is a semitopological semigroup.
\end{itemize}

The bicyclic monoid admits only the discrete semigroup Hausdorff topology \cite{Eberhart-Selden-1969}. Bertman and  West in \cite{Bertman-West-1976} extended this result for the case of Hausdorff semitopological semigroups. Stable and $\Gamma$-compact topological semigroups do not contain the bicyclic monoid~\cite{Anderson-Hunter-Koch-1965, Hildebrant-Koch-1986, Koch-Wallace-1957}. The problem of embedding the bicyclic monoid into compact-like topological semigroups was studied in \cite{Banakh-Dimitrova-Gutik-2009, Banakh-Dimitrova-Gutik-2010, Bardyla-Ravsky-2019, Gutik-Repovs-2007}.

In the paper \cite{Gutik-P.Khylynskyi-2021} we studied algebraic properties of the monoid $\mathbf{I}\mathbb{N}_{\infty}^{[j]}$ and extend the Eberhard-Selden and Bertman-West results  \cite{Eberhart-Selden-1969, Bertman-West-1976} to the semigroups $\mathbf{I}\mathbb{N}_{\infty}^{[j]}$, $j\geqslant 0$. In particular it is shown that for any positive integer $j$ every Hausdorff shift-continuous  topology $\tau$ on $\mathbf{I}\mathbb{N}_{\infty}^{[j]}$ is discrete and and if $\mathbf{I}\mathbb{N}_{\infty}^{[j]}$ is a proper dense subsemigroup of a Hausdorff semitopological semigroup $S$, then $S\setminus \mathbf{I}\mathbb{N}_{\infty}^{[j]}$  is a closed ideal of $S$, and moreover if $S$ is a topological inverse semigroup then $S\setminus \mathbf{I}\mathbb{N}_{\infty}^{[j]}$ is a topological group. Also it is described the algebraic and topological structure of the closure of the monoid $\mathbf{I}\mathbb{N}_{\infty}^{[j]}$ in a locally compact topological inverse semigroup.

The well known A.~Weil Theorem states that \emph{every locally compact monothetic topological group $G$} (i.e., $G$ contains a cyclic dense subgroup) \emph{is either compact or discrete} (see \cite{Weil-1938}). A semitopological semigroup $S$ is called \emph{monothetic} if it contains a cyclic dense subsemigroup. Locally compact and compact monothetic topological semigroups were studied by Hewitt \cite{Hewitt-1956},  Hofmann \cite{Hofmann-1960}, Koch  \cite{Koch-1957}, Numakura \cite{Numakura-1952} and others (see more information on this topics in the books \cite{Carruth-Hildebrant-Koch-1986} and \cite{Hofmann-Mostert-1966}). Koch in \cite{Koch-1957} posed the following problem: ``\emph{If $S$ is a locally compact monothetic semigroup and $S$ has an identity, must $S$ be compact?}'' From the other side, Zelenyuk in \cite{Zelenyuk-1988} constructed a countable monothetic locally compact topological semigroup without unit which is neither compact nor discrete and in \cite{Zelenyuk-2019} he constructed a monothetic locally compact topological monoid with the same property. The topological properties of monothetic locally compact (semi)topological semigroups studied in \cite{Banakh-Bardyla-Guran-Gutik-Ravsky-2020, Guran-Kisil-2012, Zelenyuk-2020, Zelenyuk-Zelenyuk-2020}.

In the paper \cite{Gutik-2015} it is proved that every Hausdorff locally compact shift-continuous topology on the bicyclic monoid with adjoined zero is either compact or discrete. This result was extended by Bardyla onto the a polycyclic monoid \cite{Bardyla-2016} and graph inverse semigroups \cite{Bardyla-2018}, and by Mokrytskyi onto the monoid of order isomorphisms between principal filters of $\mathbb{N}^n$ with adjoined zero \cite{Mokrytskyi-2019}. Also, in \cite{Gutik-Maksymyk-2019} it is proved that the extended bicyclic semigroup $\mathscr{C}_\mathscr{\mathbb{Z}}^0$ with adjoined zero admits continuum many different  shift-continuous topologies, however every Hausdorff locally compact semigroup topology on $\mathscr{C}_\mathscr{\mathbb{Z}}^0$ is discrete. In \cite{Bardyla=2021??} Bardyla proved that a Hausdorff locally compact semitopological semigroup McAlister Semigroup $\mathcal{M}_1$ is either compact or discrete. However, this dichotomy does not hold for the McAlister Semigroup $\mathcal{M}_2$ and moreover, $\mathcal{M}_2$ admits continuum many different Hausdorff locally compact inverse semigroup topologies \cite{Bardyla=2021??}.

In this paper we extend the results of paper \cite{Gutik-2015} onto the monoid $\mathbf{I}\mathbb{N}_{\infty}$ of all partial cofinite isometries of positive integers with adjoined zero. In particular we prove that if $S$ is a submonoid of $\mathbf{I}\mathbb{N}_{\infty}$ which contains $\mathscr{C}_\mathbb{N}$ as a submonoid then every Hausdorff locally compact shift-continuous topology on $S$ with adjoined zero is either compact or discrete. Also we show that the similar statement holds for a locally compact semitopological semigroup $S$ with  an adjoined compact ideal.

\smallskip

\section{On submonoids of the monoid $\mathbf{I}\mathbb{N}_{\infty}$}\label{sec-2}

\begin{lemma}\label{lemma-2.1}
Let $S$ be a subsemigroup of $\mathbf{I}\mathbb{N}_{\infty}$ which contains $\mathscr{C}_{\mathbb{N}}$. Then $S$ is a simple inverse monoid.
\end{lemma}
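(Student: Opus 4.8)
The plan is to establish two things separately: that $S$ is an inverse monoid, and that $S$ is simple. The monoid part is immediate, since $\mathscr{C}_{\mathbb{N}}$ contains the identity $\mathbb{I}=\alpha\beta$ of $\mathbf{I}\mathbb{N}_{\infty}$, so $S$ inherits this identity. For the inverse property, I would use the fact that $\mathbf{I}\mathbb{N}_{\infty}$ is already an inverse semigroup, so it suffices to check that $S$ is closed under the inversion $\gamma\mapsto\gamma^{-1}$. The key observation here is that for any $\gamma\in\mathbf{I}\mathbb{N}_{\infty}$ we have $\gamma^{-1}=\beta^{k}\gamma^{\ast}\alpha^{m}$ for suitable powers, or more directly that $\gamma\gamma^{-1}$ and $\gamma^{-1}\gamma$ are idempotents lying in $\mathscr{C}_{\mathbb{N}}\subseteq S$; combined with a presentation of $\gamma^{-1}$ as a product of $\gamma$ with elements of $\mathscr{C}_{\mathbb{N}}$, this forces $\gamma^{-1}\in S$.

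Let me sketch the closure under inversion more carefully, since that is the crux. Given $\gamma\in S$, consider its inverse $\gamma^{-1}\in\mathbf{I}\mathbb{N}_{\infty}$, where $\operatorname{dom}\gamma^{-1}=\operatorname{ran}\gamma$ and $\operatorname{ran}\gamma^{-1}=\operatorname{dom}\gamma$. Using the notation from the excerpt, let $d=n_{\gamma}^{\mathbf{d}}-\underline{n}_{\gamma}^{\mathbf{d}}$. I would multiply $\gamma$ on appropriate sides by powers of $\alpha$ and $\beta$ to produce an element whose action agrees with $\gamma^{-1}$. Concretely, $\gamma^{-1}$ should be expressible as $\sigma_1\gamma^{r}\sigma_2$ for some $\sigma_1,\sigma_2\in\mathscr{C}_{\mathbb{N}}$ and integer $r$; alternatively, since $\mathscr{C}_{\mathbb{N}}$ is a full inverse subsemigroup containing all the relevant idempotents $\gamma\gamma^{-1},\gamma^{-1}\gamma$, one can realize $\gamma^{-1}$ within the inverse subsemigroup of $\mathbf{I}\mathbb{N}_{\infty}$ generated by $S$, which must coincide with $S$ once closure is verified. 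The cleanest route is probably to show directly that the shift $\overrightarrow{\gamma^{-1}}$ and the finite ``correction'' part of $\gamma^{-1}$ are both obtainable from $\gamma$ and $\mathscr{C}_{\mathbb{N}}$.

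For \textbf{simplicity}, I would prove that $S$ has no proper two-sided ideals by showing that for any $a,b\in S$ there exist $x,y\in S$ with $xay=b$; equivalently, every principal ideal $SaS$ equals $S$. Since $\mathscr{C}_{\mathbb{N}}\subseteq S$ and the bicyclic monoid $\mathscr{C}_{\mathbb{N}}$ is itself simple, I would exploit that multiplying by high powers $\beta^{n}$ and $\alpha^{m}$ from $\mathscr{C}_{\mathbb{N}}$ lets one ``translate'' and ``truncate'' domains and ranges at will. The idea is that given any $a\in S$, one can find $u,v\in\mathscr{C}_{\mathbb{N}}$ such that $uav=\mathbb{I}$ or at least produces an idempotent from which the whole of $S$ is reachable; then for arbitrary $b$, writing $b=b\cdot\mathbb{I}=b(uav)$ exhibits $b\in SaS$.

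\textbf{The main obstacle} I anticipate is the closure under inversion: one must track carefully how the finite non-monotone discrepancy of $\gamma$ (measured by the noise $n_{\gamma}^{\mathbf{d}}-\underline{n}_{\gamma}^{\mathbf{d}}$) behaves under multiplication by the shift generators $\alpha,\beta$, and verify that composing $\gamma$ with elements of $\mathscr{C}_{\mathbb{N}}$ can reproduce $\gamma^{-1}$ exactly rather than merely up to a restriction. Getting the bookkeeping of domains and ranges right here is where the real work lies; the simplicity argument, by contrast, should follow routinely once the translation trick via $\mathscr{C}_{\mathbb{N}}$ is set up.
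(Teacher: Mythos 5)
Your plan correctly isolates the two things to prove and even guesses the right form of the answer ($\gamma^{-1}=\sigma_1\gamma\sigma_2$ with $\sigma_1,\sigma_2\in\mathscr{C}_{\mathbb{N}}$), but the step you explicitly defer as ``the main obstacle'' --- actually producing $\gamma^{-1}$ inside $S$ --- is the entire content of the lemma, and the two shortcuts you propose in its place both fail. First, the claim that $\gamma\gamma^{-1}$ and $\gamma^{-1}\gamma$ lie in $\mathscr{C}_{\mathbb{N}}$ is false: these are the identity maps of $\operatorname{dom}\gamma$ and $\operatorname{ran}\gamma$, which are arbitrary cofinite subsets of $\mathbb{N}$, whereas the idempotents of $\mathscr{C}_{\mathbb{N}}$ are only the identity maps of the final segments $[n)$; for instance the identity map of $\{1\}\cup[4)$ is an idempotent of $\mathbf{I}\mathbb{N}_{\infty}$ that is not in $\mathscr{C}_{\mathbb{N}}$. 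For the same reason $\mathscr{C}_{\mathbb{N}}$ is \emph{not} a full inverse subsemigroup of $\mathbf{I}\mathbb{N}_{\infty}$ (fullness means $E(T)=E(S)$, and here $E(\mathscr{C}_{\mathbb{N}})\subsetneqq E(\mathbf{I}\mathbb{N}_{\infty})$), so that route collapses. Second, the fallback of realizing $\gamma^{-1}$ ``within the inverse subsemigroup of $\mathbf{I}\mathbb{N}_{\infty}$ generated by $S$, which must coincide with $S$ once closure is verified'' is circular: closure is precisely what has to be proved.

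The missing idea is a concrete witness, and it comes from the fact (Lemma~1 of \cite{Gutik-Savchuk-2018}, quoted in Section~\ref{sec-2}) that every $\gamma\in\mathbf{I}\mathbb{N}_{\infty}$ is a partial \emph{shift}: $(i)\gamma=i+k$ for all $i\in\operatorname{dom}\gamma$, where $k=\underline{n}_{\gamma}^{\mathbf{r}}-\underline{n}_{\gamma}^{\mathbf{d}}$. Hence a single element of $\mathscr{C}_{\mathbb{N}}$, namely $\gamma_0=\beta^{\underline{n}_{\gamma}^{\mathbf{r}}}\alpha^{\underline{n}_{\gamma}^{\mathbf{d}}}$, implements the opposite shift on a set large enough that $\operatorname{ran}\gamma\subseteq\operatorname{dom}\gamma_0$ and $\operatorname{dom}\gamma\subseteq\operatorname{ran}\gamma_0$; then $\gamma\gamma_0$ and $\gamma_0\gamma$ are the identity maps of $\operatorname{dom}\gamma$ and $\operatorname{ran}\gamma$, and the short computations $\gamma(\gamma_0\gamma\gamma_0)\gamma=\gamma$ and $(\gamma_0\gamma\gamma_0)\gamma(\gamma_0\gamma\gamma_0)=\gamma_0\gamma\gamma_0$ show that $\gamma_0\gamma\gamma_0\in S$ is an inverse of $\gamma$. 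Note that none of the delicate ``noise'' bookkeeping you worried about is needed: $\gamma_0$ does not have to match $\operatorname{dom}\gamma^{-1}$ exactly, because composition of partial maps restricts domains automatically. By contrast, your simplicity argument is sound and completable exactly as sketched: for $a\in S$ with shift $k$, choose $m$ with $[m+1)\subseteq\operatorname{dom}a$ and $m+k\geqslant 0$; then $\alpha^{m}a\beta^{m+k}=\mathbb{I}$, so $b=(b\alpha^{m})a\beta^{m+k}\in SaS$ for every $b\in S$. This is in fact more self-contained than the paper, which simply cites Theorem~5 of \cite{Gutik-Savchuk-2019} for simplicity.
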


\begin{proof}
The statement of the lemma is trivial when either $S=\mathbf{I}\mathbb{N}_{\infty}$ or $S=\mathscr{C}_{\mathbb{N}}$. Hence, we suppose that $S$ is a proper submonoid of $\mathbf{I}\mathbb{N}_{\infty}$ which contains $\mathscr{C}_{\mathbb{N}}$ as a proper submonoid. To prove that $S$ is an inverse semigroup, it is sufficient to show that every non-idempotent element of $S$ has inverse in $S$.

Let $\gamma$ be an arbitrary non-idempotent element of $S\setminus\mathscr{C}_{\mathbb{N}}$.
By Lemma~1 of \cite{Gutik-Savchuk-2018}, $\gamma$ is a shift of a subset $\operatorname{dom}\gamma$ of $\mathbb{N}$.
Put $\gamma_0=\beta^{\underline{n}_{\gamma}^{\mathbf{r}}}\alpha^{\underline{n}_{\gamma}^{\mathbf{d}}}$. Then $\operatorname{dom}\gamma\subseteq \operatorname{ran}\gamma_0$, $\operatorname{ran}\gamma\subseteq \operatorname{dom}\gamma_0$, and hence
$$
(i)\gamma_0^{-1}=(i)\beta^{\underline{n}_{\gamma}^{\mathbf{d}}}\alpha^{\underline{n}_{\gamma}^{\mathbf{r}}}=(i)\gamma
$$
for all $i\in \operatorname{dom}\gamma$. This implies that $\gamma\gamma_0$ is the identity map of the set $\operatorname{dom}\gamma$ and $\gamma_0\gamma$ is the identity map of the set $\operatorname{ran}\gamma$. Hence we get that the elements  $\gamma\gamma_0$ and $\gamma_0\gamma$ are idempotents in the semigroup $S$, and $\gamma\gamma_0\gamma=\gamma$. We claim that the element $\gamma_0\gamma\gamma_0$ is inverse of $\gamma$. Indeed, we have that
\begin{equation*}
  \gamma(\gamma_0\gamma\gamma_0)\gamma=(\gamma\gamma_0\gamma)\gamma_0\gamma=\gamma\gamma_0\gamma=\gamma
\end{equation*}
and
\begin{align*}
  (\gamma_0\gamma\gamma_0)\gamma(\gamma_0\gamma\gamma_0)&=\gamma_0(\gamma\gamma_0\gamma)(\gamma_0\gamma\gamma_0)=\\
     &=\gamma_0\gamma(\gamma_0\gamma\gamma_0)=\\
     &=\gamma_0(\gamma\gamma_0\gamma)\gamma_0=\\
     &=\gamma_0\gamma\gamma_0.
\end{align*}
By Theorem~5 of \cite{Gutik-Savchuk-2019} the monoid $S$ is simple.
\end{proof}

Fix an arbitrary non-empty finite subset $A$ of $\mathbb{N}$ such that $1\in A$. Let $n^0$ be a positive integer such  that $n^0\geqslant \max A+2$. We denote $A[n^0)=A\cup[n^0)$ and for any $i\in\omega$ we put
\begin{equation*}
  i+A[n^0)=\{i+k\colon k\in A[n^0)\}.
\end{equation*}

By Lemma~1 of \cite{Gutik-Savchuk-2018} every element of the semigroup $\mathbf{I}\mathbb{N}_{\infty}$ is a partial shift of a co-finite subset of $\mathbb{N}$. Hence in the above presented terminology we get the following descriptions of elements of the semigroup $\mathbf{I}\mathbb{N}_{\infty}$.

\begin{corollary}\label{corollary-2.3}
For every element $\gamma$ of the semigroup $\mathbf{I}\mathbb{N}_{\infty}$ one of the following conditions holds:
\begin{itemize}
  \item[$(i)$] $\gamma$ is a shift of the set $[n^0)$ for some positive integer $n^0$, i.e., $\mathrm{dom}\gamma=[n^0)$;
  \item[$(ii)$] $\gamma$ is a shift of the set $i+A[n^0)$ for some $i\in\omega$, $n^0\in\mathbb{N}$ and a non-empty finite subset $A\subset \mathbb{N}$ such that $\min A=1$ and $n^0\geqslant\max A+2$, i.e., $\mathrm{dom}\gamma=i+A[n^0)$.
\end{itemize}
\end{corollary}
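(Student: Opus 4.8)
The plan is to reduce the whole statement to a combinatorial description of the domain. By Lemma~1 of \cite{Gutik-Savchuk-2018} the element $\gamma$ is automatically a partial shift of its domain, so the genuine content is only to show that the cofinite set $D=\operatorname{dom}\gamma\subseteq\mathbb{N}$ has one of the two prescribed normal forms; once $D$ is written in the correct shape, ``$\gamma$ is a shift of $D$'' is inherited from that lemma with no further work. I would then split into two cases according to whether $D$ is a final segment of $(\mathbb{N},\leqslant)$, that is, whether $D=[\min D)$.

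If $D=[\min D)$, then setting $n^0=\min D$ puts $\gamma$ into case $(i)$ and nothing else is needed. If $D$ is not a final segment, I would put $i=\min D-1\in\omega$ and pass to the normalized set $D-i=\{d-i\colon d\in D\}$, which is again cofinite and has $\min(D-i)=1$. Since $D$ is not a final segment, $D-i\neq\mathbb{N}$, so the finite set $\mathbb{N}\setminus(D-i)$ is non-empty; let $m$ be its largest element and put $n^0=m+1$, so that $[n^0)\subseteq D-i$ while $m\notin D-i$. Taking $A=(D-i)\setminus[n^0)$ (the elements of $D-i$ below $n^0$), the verifications are routine: $A$ is finite and non-empty with $\min A=1$, because $1\in D-i$ and $1<n^0$; and since every element of $A$ is smaller than $n^0=m+1$ and different from $m$ (as $m\notin D-i\supseteq A$), one gets $\max A\leqslant m-1=n^0-2$, i.e.\ $n^0\geqslant\max A+2$. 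Then $D-i=A\cup[n^0)=A[n^0)$, whence $D=i+A[n^0)$ and $\gamma$ falls under case $(ii)$.

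There is no serious obstacle here: the statement is essentially a bookkeeping consequence of Lemma~1 of \cite{Gutik-Savchuk-2018} together with the elementary fact that a cofinite subset of $\mathbb{N}$ is determined by its minimum, its finitely many gaps, and the position of its largest gap. The only point requiring a moment's care is the \emph{sharp} inequality $n^0\geqslant\max A+2$ rather than merely $n^0\geqslant\max A+1$; this is exactly where one uses the choice of $n^0$ as one more than the largest gap $m$, so that $m=n^0-1$ is missing from $D-i$ and hence from $A$.
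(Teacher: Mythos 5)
Your proof is correct and follows essentially the same route as the paper: the paper deduces the corollary directly from Lemma~1 of \cite{Gutik-Savchuk-2018} (every element of $\mathbf{I}\mathbb{N}_{\infty}$ is a partial shift of its cofinite domain), leaving the normal-form description of a cofinite subset of $\mathbb{N}$ implicit, which is exactly the bookkeeping you spell out. Your explicit choice of $i=\min D-1$, of $n^0$ as one more than the largest gap, and of $A=(D-i)\setminus[n^0)$ is precisely the intended decomposition, including the point that the largest gap forces $n^0\geqslant\max A+2$.
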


For any non-empty finite subset $A\subset \mathbb{N}$ such that $\min A=1$, for any nonnegative integer $i$ and any positive integer $n^0\geqslant\max A+2$ by $\varepsilon^{n^0}_A[i)$ we denote the identity map of the set $i+A[n^0)$.

Corollary~\ref{corollary-2.3} implies

\begin{proposition}\label{proposition-2.4}
Let $\gamma$ be an element of the semigroup $\mathbf{I}\mathbb{N}_{\infty}$ such that $\mathrm{dom}\gamma=i+A[n^0)$ for some non-empty finite subset $A\subset \mathbb{N}$, a nonnegative integer $i$ and a positive integer $n^0\geqslant\max A+2$. Then there exists a nonnegative integer $j$ such that
\begin{equation}\label{eq-2.1}
  \gamma=\varepsilon^{n^0}_A[i)\cdot \beta^i\alpha^j=\beta^i\alpha^j\cdot\varepsilon^{n^0}_A[j).
\end{equation}
\end{proposition}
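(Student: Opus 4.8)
The plan is to use the fact, recalled just before Corollary~\ref{corollary-2.3}, that every element of $\mathbf{I}\mathbb{N}_{\infty}$ is a partial shift of a cofinite subset of $\mathbb{N}$ (Lemma~1 of \cite{Gutik-Savchuk-2018}). Applied to $\gamma$, this produces an integer $c$ with $(x)\gamma=x+c$ for every $x\in\operatorname{dom}\gamma=i+A[n^0)$. The first step is then to read off the range: $\operatorname{ran}\gamma=\{x+c\colon x\in i+A[n^0)\}=(i+c)+A[n^0)$. Setting $j=i+c$, and using $\min A=1$, we get $\min\operatorname{ran}\gamma=j+1\geqslant 1$, so $j$ is a nonnegative integer. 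This is the integer $j$ whose existence is claimed, and the computation simultaneously records $\operatorname{ran}\gamma=j+A[n^0)$.

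The remaining work is the direct verification of the two factorizations in \eqref{eq-2.1}, composing maps on the right as elsewhere in the paper. For the first, I would observe that $\min A=1$ forces $i+A[n^0)\subseteq[i+1)=\operatorname{dom}\beta^i$, so precomposing with the idempotent $\varepsilon^{n^0}_A[i)$ restricts the composite to exactly $i+A[n^0)$; on this set $\varepsilon^{n^0}_A[i)$ fixes $x$, then $\beta^i$ sends $x\mapsto x-i$, and then $\alpha^j$, which has full domain $\mathbb{N}$, sends $x-i\mapsto x-i+j=x+c$. Hence $\varepsilon^{n^0}_A[i)\cdot\beta^i\alpha^j$ and $\gamma$ have the same domain and the same action, so they coincide.

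For the second factorization I would first note that $\beta^i\alpha^j$ is the global shift $x\mapsto x-i+j=x+c$ with $\operatorname{dom}(\beta^i\alpha^j)=[i+1)$ and $\operatorname{ran}(\beta^i\alpha^j)=[j+1)\supseteq j+A[n^0)$. Postcomposing with $\varepsilon^{n^0}_A[j)$, the identity map of $j+A[n^0)$, restricts the domain to those $x$ with $x+c\in j+A[n^0)$, i.e.\ with $x\in i+A[n^0)$, and leaves the action $x\mapsto x+c$ unchanged; this again reproduces $\gamma$.

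The only genuinely delicate point is the domain bookkeeping: one must check that pre- or post-composing the global shift $\beta^i\alpha^j$ with the idempotents $\varepsilon^{n^0}_A[i)$ and $\varepsilon^{n^0}_A[j)$ yields precisely $\operatorname{dom}\gamma=i+A[n^0)$, with no loss or gain of points. This is exactly where the two standing hypotheses enter: $\min A=1$ guarantees $i+A[n^0)\subseteq\operatorname{dom}\beta^i$, so nothing is cut off on the left, while $\operatorname{dom}\alpha^j=\mathbb{N}$ together with $\operatorname{ran}\alpha^j=[j+1)\supseteq j+A[n^0)$ guarantees that the shift is everywhere defined and lands correctly on the right. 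Once these inclusions are in place, everything reduces to a routine composition of partial shifts.
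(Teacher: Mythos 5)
Your proof is correct and follows essentially the same route as the paper: the paper states Proposition~\ref{proposition-2.4} as an immediate consequence of Corollary~\ref{corollary-2.3} (i.e., of Lemma~1 of \cite{Gutik-Savchuk-2018}, that every element of $\mathbf{I}\mathbb{N}_{\infty}$ is a partial shift of its cofinite domain), and your argument is exactly that derivation with the details made explicit — identifying $j=i+c$ via the range, checking $j\geqslant 0$ from $\min A=1$, and verifying the two compositions by domain bookkeeping. Nothing is missing; the paper simply leaves these verifications to the reader.
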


\begin{remark}\label{remark-2.5}
We observe that  representation \eqref{eq-2.1} of a such element $\gamma$ is not unique. Moreover for any non-negative integer $k$ such that $i-k\geqslant 0$ and  $j-k\geqslant 0$ we have that
\begin{equation*}
  \gamma=\varepsilon^{n^0}_A[i)\cdot \beta^{i-k}\alpha^{j-k}=\beta^{i-k}\alpha^{j-k}\cdot\varepsilon^{n^0}_A[j).
\end{equation*}
Also, since in formula \eqref{eq-2.1} we have that $i=\underline{n}_{\gamma}^{\mathbf{d}}$ and $j=\underline{n}_{\gamma}^{\mathbf{r}}$, the equalities \eqref{eq-2.1} are called the \emph{canonical representations} of $\gamma\in \mathbf{I}\mathbb{N}_{\infty}$.
\end{remark}

On any submonoid $S$ of the semigroup $\mathbf{I}\mathbb{N}_{\infty}$ which contains $\mathscr{C}_\mathbb{N}$ we define a binary relation $\ll$ in the following way:
\begin{equation*}
  \gamma\ll\delta \quad \hbox{if and only if there exists a non-negative integer~} i  \hbox{~such that~}\gamma=\beta^i\delta\alpha^i,
\end{equation*}
for $\gamma,\delta\in S$.

\begin{proposition}\label{proposition-2.6}
The binary relation $\ll$ is a partial order on any submonoid $S$ of the semigroup $\mathbf{I}\mathbb{N}_{\infty}$ which contains $\mathscr{C}_\mathbb{N}$.
\end{proposition}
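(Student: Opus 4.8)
The plan is to verify directly the three defining properties of a partial order: reflexivity, transitivity, and antisymmetry. Reflexivity and transitivity reduce to elementary bookkeeping with the powers of $\alpha$ and $\beta$, so I expect them to be routine; antisymmetry is where the genuine content lies, and for it I would first isolate a short computation describing how the operator $\delta\mapsto\beta^i\delta\alpha^i$ acts on domains, and then exploit the fact that $\operatorname{dom}\delta$, being a subset of $\mathbb{N}$, is bounded below.

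For reflexivity I would take $i=0$: since $\mathbb{I}=\alpha^0=\beta^0$ by Remark~\ref{remark-1.1}, we have $\delta=\beta^0\delta\alpha^0$, hence $\delta\ll\delta$ for every $\delta\in S$. For transitivity, suppose $\gamma\ll\delta$ and $\delta\ll\eta$, witnessed by $\gamma=\beta^i\delta\alpha^i$ and $\delta=\beta^j\eta\alpha^j$ for some $i,j\in\omega$. Substituting the second equality into the first and using $\beta^i\beta^j=\beta^{i+j}$ and $\alpha^j\alpha^i=\alpha^{i+j}$ yields $\gamma=\beta^{i+j}\eta\alpha^{i+j}$, and since $i+j$ is again a non-negative integer we conclude $\gamma\ll\eta$.

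The main obstacle is antisymmetry, and the key step is to establish the formula
\[
\operatorname{dom}(\beta^i\delta\alpha^i)=i+\operatorname{dom}\delta,
\]
together with the observation that $\beta^i\delta\alpha^i$ shifts each point of its domain by the same constant as $\delta$. This follows by a direct computation with the composition convention, recalling that by Lemma~1 of \cite{Gutik-Savchuk-2018} every element of $\mathbf{I}\mathbb{N}_\infty$ is a partial shift and that $\operatorname{dom}\beta^i=[i+1)$ while $\operatorname{dom}\alpha^i=\mathbb{N}$, so that the only binding constraint is $x-i\in\operatorname{dom}\delta$. In particular $\underline{n}_\gamma^{\mathbf{d}}=i+\underline{n}_\delta^{\mathbf{d}}$ whenever $\gamma=\beta^i\delta\alpha^i$. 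Now assume $\gamma\ll\delta$ and $\delta\ll\gamma$, say $\gamma=\beta^i\delta\alpha^i$ and $\delta=\beta^j\gamma\alpha^j$ with $i,j\geqslant 0$. Then $\underline{n}_\gamma^{\mathbf{d}}=i+\underline{n}_\delta^{\mathbf{d}}$ and $\underline{n}_\delta^{\mathbf{d}}=j+\underline{n}_\gamma^{\mathbf{d}}$, whence $\underline{n}_\gamma^{\mathbf{d}}=(i+j)+\underline{n}_\gamma^{\mathbf{d}}$. Since $\operatorname{dom}\gamma\subseteq\mathbb{N}$ is bounded below, its least element $\underline{n}_\gamma^{\mathbf{d}}$ is finite, so this equality forces $i+j=0$ and hence $i=j=0$. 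Finally $i=0$ gives $\gamma=\beta^0\delta\alpha^0=\delta$, which proves antisymmetry and completes the argument. I expect the only delicate point to be the bookkeeping in the domain computation; once the identity $\operatorname{dom}(\beta^i\delta\alpha^i)=i+\operatorname{dom}\delta$ is in hand, the strict monotonicity of the minimum under a positive shift of a subset of $\mathbb{N}$ finishes everything.
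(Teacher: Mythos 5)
Your proposal is correct and follows essentially the same route as the paper: reflexivity via $i=0$, transitivity by composing the two witnesses, and antisymmetry by combining $\gamma=\beta^i\delta\alpha^i$ and $\delta=\beta^j\gamma\alpha^j$ and using the fact that conjugation by $\beta^{i+j}\cdot\alpha^{i+j}$ shifts the minimum of the domain upward, which forces $i+j=0$. Your explicit identity $\operatorname{dom}(\beta^i\delta\alpha^i)=i+\operatorname{dom}\delta$ merely spells out the step the paper compresses into ``the definitions of the elements $\alpha$ and $\beta$ imply $\underline{n}_{\gamma}^{\mathbf{d}}\leqslant \underline{n}_{\beta^{i+j}\gamma\alpha^{j+i}}^{\mathbf{d}}$.''
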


\begin{proof}
Obviously that  $\gamma=\beta^0\gamma\alpha^0$ for any $\gamma\in S$, and hence $\ll$ is reflexive.

Suppose that $\gamma\ll\delta$ and $\delta\ll\gamma$ for some $\gamma,\delta\in S$. Then $\gamma=\beta^i\delta\alpha^i$ and $\delta=\beta^j\gamma\alpha^j$ for some $i,j\in\omega$. This implies that $\gamma=\beta^i\beta^j\gamma\alpha^j\alpha^i=\beta^{i+j}\gamma\alpha^{j+i}$. Then the definitions of elements $\alpha$ and $\beta$ imply that $\underline{n}_{\gamma}^{\mathbf{d}}\leqslant \underline{n}_{\beta^{i+j}\gamma\alpha^{j+i}}^{\mathbf{d}}$, and hence we get that $i+j=0$. Then $\gamma=\delta$, and hence the binary relation $\ll$ is antisymmetric.

Suppose that $\gamma\ll\delta$ and $\delta\ll\eta$ for some $\gamma,\delta,\eta\in S$, i.e., $\gamma=\beta^i\delta\alpha^i$ and $\delta=\beta^j\eta\alpha^j$ for some $i,j\in\omega$. This implies that $\gamma=\beta^i\beta^j\eta\alpha^j\alpha^i=\beta^{i+j}\gamma\alpha^{j+i}$. Thus $\gamma\ll\eta$ and hence the binary relation $\ll$ is transitive.
\end{proof}

Some interesting property of the relation $\ll$ on the semigroup $\mathbf{I}\mathbb{N}_{\infty}$ presents the following proposition.

\begin{proposition}\label{proposition-2.7}
The restriction of the binary relation $\ll$ on the subsemigroup $\mathscr{C}_\mathbb{N}$ of $\mathbf{I}\mathbb{N}_{\infty}$ is the natural partial order on $\mathscr{C}_\mathbb{N}$.
\end{proposition}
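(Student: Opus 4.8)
The plan is to exploit the unique normal-form representation of elements of $\mathscr{C}_\mathbb{N}$ together with the defining identity $\alpha\beta=\mathbb{I}$. By Remark~\ref{remark-1.1}, $\mathscr{C}_\mathbb{N}$ is isomorphic to the bicyclic monoid, so every element can be written uniquely as $\beta^k\alpha^l$ with $k,l\in\omega$, the idempotents are precisely the elements $\beta^i\alpha^i$ with $i\in\omega$, and by the definition recalled in the introduction the natural partial order is $\gamma\preccurlyeq\delta$ if and only if $\gamma=\delta e$ for some $e\in E(\mathscr{C}_\mathbb{N})$. The computational engine throughout will be the reduction rules $\alpha^n\beta^j=\beta^{j-n}$ when $j\geqslant n$ and $\alpha^n\beta^j=\alpha^{n-j}$ when $n\geqslant j$, both of which follow at once from $\alpha^i\beta^i=\mathbb{I}$.

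For the inclusion $\ll\,\subseteq\,\preccurlyeq$, I would take $\gamma\ll\delta$ with $\delta=\beta^m\alpha^n$ and witnessing exponent $i$, so that $\gamma=\beta^i\delta\alpha^i=\beta^{m+i}\alpha^{n+i}$. The goal is to produce an idempotent $f$ with $\gamma=\delta f$. The natural candidate is $f=\beta^{n+i}\alpha^{n+i}$; applying $\alpha^n\beta^{n+i}=\beta^i$ one computes $\delta f=\beta^m\alpha^n\beta^{n+i}\alpha^{n+i}=\beta^{m+i}\alpha^{n+i}=\gamma$, and hence $\gamma\preccurlyeq\delta$.

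For the reverse inclusion $\preccurlyeq\,\subseteq\,\ll$, I would start from $\gamma\preccurlyeq\delta$, that is $\gamma=\delta\,\beta^j\alpha^j$ for some $j\in\omega$, with $\delta=\beta^m\alpha^n$, and split on the sign of $j-n$. If $j\geqslant n$, the reduction $\alpha^n\beta^j=\beta^{j-n}$ yields $\gamma=\beta^{m+(j-n)}\alpha^{j}=\beta^{m+i}\alpha^{n+i}$ with $i=j-n\geqslant 0$, which is exactly $\gamma=\beta^i\delta\alpha^i$, so $\gamma\ll\delta$. If $j<n$, the reduction $\alpha^n\beta^j=\alpha^{n-j}$ gives $\gamma=\beta^m\alpha^n=\delta=\beta^0\delta\alpha^0$, again $\gamma\ll\delta$.

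Combining the two inclusions proves the proposition. I do not anticipate a genuine obstacle: everything reduces to the bicyclic multiplication, and the only point requiring care is the bookkeeping in the two reduction rules for $\alpha^n\beta^j$ and the matching of the exponent $j$ of the idempotent $\beta^{j}\alpha^{j}$ in the definition of $\preccurlyeq$ with the conjugating power $i$ in the definition of $\ll$. One could alternatively bypass the case analysis entirely by translating both relations into the single arithmetic condition ``$\gamma=\beta^k\alpha^l$ and $\delta=\beta^m\alpha^n$ with $k-m=l-n\geqslant 0$'' and verifying that each of $\ll$ and $\preccurlyeq$ is described by it; this is the cleaner route if a uniform closed-form statement is preferred.
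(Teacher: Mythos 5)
Your proof is correct and takes essentially the same approach as the paper: the forward direction is the identical computation with the idempotent $\beta^{n+i}\alpha^{n+i}$, and the reverse direction is the same bicyclic normal-form calculation, except that the paper first normalizes the witnessing idempotent to $\gamma^{-1}\gamma$ via Lemma~1.4.6 of \cite{Lawson-1998} while you work with an arbitrary idempotent $\beta^j\alpha^j$ and case-split on $j\geqslant n$ versus $j<n$ (the latter collapsing to $\gamma=\delta$). The difference is purely tactical, and if anything your version is slightly more self-contained.
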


\begin{proof}
Suppose that $\beta^{i_1}\alpha^{j_1}\ll\beta^{i_2}\alpha^{j_2}$ for some $\beta^{i_1}\alpha^{j_1},\beta^{i_2}\alpha^{j_2}\in \mathscr{C}_\mathbb{N}$. The definition of the relation $\ll$ implies that there exists $i\in\omega$ such  that $i_1=i_2+i$ and $j_1=j_2+i$. Then we have that
\begin{align*}
  \beta^{i_2}\alpha^{j_2}\cdot \beta^{j_2+i}\alpha^{j_2+i}&= \beta^{i_2}\alpha^{j_2}(\beta^{j_2} \beta^{i})\alpha^{j_2+i}=\\
    &=\beta^{i_2}(\alpha^{j_2}\beta^{j_2})\beta^{i}\alpha^{j_2+i}=\\
    &=\beta^{i_2}\beta^{i}\alpha^{j_2+i}= \\
    &=\beta^{i_2+i}\alpha^{j_2+i}=\\
    &=\beta^{i_1}\alpha^{j_1},
\end{align*}
and hence $\beta^{i_1}\alpha^{j_1}\preccurlyeq\beta^{i_2}\alpha^{j_2}$ in $\mathscr{C}_\mathbb{N}$.

Suppose that $\beta^{i_1}\alpha^{j_1}\preccurlyeq\beta^{i_2}\alpha^{j_2}$ for some $\beta^{i_1}\alpha^{j_1},\beta^{i_2}\alpha^{j_2}\in \mathscr{C}_\mathbb{N}$. By Lemma~1.4.6 of \cite{Lawson-1998} we have that
\begin{align*}
\beta^{i_1}\alpha^{j_1}
  &=\beta^{i_2}\alpha^{j_2}\cdot (\beta^{i_1}\alpha^{j_1})^{-1}\cdot \beta^{i_1}\alpha^{j_1}= \\
  &=\beta^{i_2}\alpha^{j_2}\cdot \beta^{j_1}\alpha^{i_1} \beta^{i_1}\alpha^{j_1}= \\
  &=\beta^{i_2}\alpha^{j_2}\cdot \beta^{j_1}\alpha^{j_1}= \\
  &=\left\{
      \begin{array}{ll}
        \beta^{i_2-j_2+j_1}\alpha^{j_1}, & \hbox{if~} j_2\leqslant j_1;\\
        \beta^{i_2}\alpha^{j_2}, & \hbox{if~} j_2>j_1.
      \end{array}
    \right.
\end{align*}
This implies $j_2\leqslant j_1$ and $i_2-j_2+j_1=i_1$, and hence $k=i_1-i_2=j_1-j_2\geqslant 0$. Then
\begin{equation*}
  \beta^k\cdot\beta^{i_1}\alpha^{j_1}\cdot\alpha^k=\beta^{i_1-i_2}\cdot\beta^{i_2}\alpha^{j_2}\cdot\alpha^{j_1-j_2}=
  \beta^{i_1-i_2+i_2}\alpha^{j_2+j_1-j_2}=\beta^{i_1}\alpha^{j_2},
\end{equation*}
which implies that $\beta^{i_1}\alpha^{j_1}\ll\beta^{i_2}\alpha^{j_2}$ in $\mathbf{I}\mathbb{N}_{\infty}$.
\end{proof}

For any $\gamma\in\mathbf{I}\mathbb{N}_{\infty}$ and $M\subseteq \mathbf{I}\mathbb{N}_{\infty}$ we denote
\begin{equation*}
{\downarrow_{\ll}}\gamma=\left\{\delta\in\mathbf{I}\mathbb{N}_{\infty}\colon \delta\ll\gamma\right\} \qquad \hbox{and} \qquad {\downarrow_{\ll}}M=\displaystyle\bigcup_{\delta\in M}{\downarrow_{\ll}}\delta.
\end{equation*}

A linearly ordered set $(X,\eqslantless)$ is called an \emph{$\omega$-chain} if it is order isomorphic to $(\omega,\geqslant)$, where $\geqslant$ is the dual order to the usual linear order $\leqslant$ on $\omega$ \cite{Reilly-1966}.

\begin{lemma}\label{lemma-2.8}
Let $S$ be a subsemigroup of $\mathbf{I}\mathbb{N}_{\infty}$ which contains $\mathscr{C}_\mathbb{N}$. Then for any  $\gamma\in S$, ${\downarrow_{\ll}}\gamma\subset $S and the poset $({\downarrow_{\ll}}\gamma,\ll)$ is an $\omega$-chain.
\end{lemma}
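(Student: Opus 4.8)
The plan is to make the set ${\downarrow_{\ll}}\gamma$ completely explicit and then read off its order type directly. By the very definition of $\ll$, an element $\delta$ satisfies $\delta\ll\gamma$ precisely when $\delta=\beta^i\gamma\alpha^i$ for some $i\in\omega$; hence
\[
{\downarrow_{\ll}}\gamma=\{\gamma_i\colon i\in\omega\}, \qquad \text{where}\quad \gamma_i:=\beta^i\gamma\alpha^i.
\]
Since $\alpha,\beta\in\mathscr{C}_\mathbb{N}\subseteq S$ and $\gamma\in S$, and $S$ is a subsemigroup, each $\gamma_i$ lies in $S$. This already gives the inclusion ${\downarrow_{\ll}}\gamma\subset S$, so the remaining task is purely to identify the poset $({\downarrow_{\ll}}\gamma,\ll)$ as an $\omega$-chain.

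The key computation is the effect of conjugation by $\beta^i,\alpha^i$ on the bottom of the domain. By Lemma~1 of \cite{Gutik-Savchuk-2018}, $\gamma$ is a partial shift, say $(x)\gamma=x+c$ on $\operatorname{dom}\gamma$ for a fixed integer $c$. Using $\operatorname{dom}\beta^i=[i+1)$, $(x)\beta^i=x-i$, $\operatorname{dom}\alpha^i=\mathbb{N}$, and the fact that $\operatorname{dom}\gamma\subseteq[1)$ forces $i+\operatorname{dom}\gamma\subseteq[i+1)$, one verifies directly that $\operatorname{dom}\gamma_i=i+\operatorname{dom}\gamma$ and $(x)\gamma_i=x+c$ there. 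In particular
\[
\underline{n}_{\gamma_i}^{\mathbf{d}}=i+\underline{n}_{\gamma}^{\mathbf{d}},
\]
which is strictly increasing in $i$. Consequently the elements $\gamma_i$ are pairwise distinct, so ${\downarrow_{\ll}}\gamma$ is infinite.

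Next I would establish that $\gamma_j\ll\gamma_i$ holds if and only if $j\geqslant i$. The ``if'' direction is immediate from $\gamma_j=\beta^{j-i}\gamma_i\alpha^{j-i}$. For ``only if'', suppose $\gamma_j=\beta^k\gamma_i\alpha^k$ for some $k\in\omega$; then the domain computation above gives $\underline{n}_{\gamma_j}^{\mathbf{d}}=k+\underline{n}_{\gamma_i}^{\mathbf{d}}$, that is, $j+\underline{n}_{\gamma}^{\mathbf{d}}=k+i+\underline{n}_{\gamma}^{\mathbf{d}}$, whence $j=i+k\geqslant i$. Therefore the bijection $i\mapsto\gamma_i$ satisfies $\gamma_a\ll\gamma_b$ if and only if $a\geqslant b$, which is exactly an order isomorphism from $(\omega,\geqslant)$ onto $({\downarrow_{\ll}}\gamma,\ll)$. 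Thus $({\downarrow_{\ll}}\gamma,\ll)$ is an $\omega$-chain.

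The only genuinely delicate step is the domain identity $\operatorname{dom}(\beta^i\gamma\alpha^i)=i+\operatorname{dom}\gamma$: one must confirm both that precomposition by $\beta^i$ does not truncate the domain (guaranteed by $i+\operatorname{dom}\gamma\subseteq\operatorname{dom}\beta^i$) and that postcomposition by the injective map $\alpha^i$ discards nothing. Everything else---the inclusion ${\downarrow_{\ll}}\gamma\subset S$, distinctness of the $\gamma_i$, linearity of the order, and the explicit order isomorphism---then follows formally, so I expect this domain computation to be where the real care is needed.
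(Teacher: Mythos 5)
Your proof is correct and takes essentially the same route as the paper: the paper's own proof simply defines the map $\mathfrak{f}\colon\omega\to{\downarrow_{\ll}}\gamma$, $\mathfrak{f}(i)=\beta^i\gamma\alpha^i$, states that ``simple verifications'' show it is an order isomorphism of $(\omega,\geqslant)$ onto $({\downarrow_{\ll}}\gamma,\ll)$, and deduces ${\downarrow_{\ll}}\gamma\subset S$. You have just spelled out those verifications explicitly (the domain identity $\operatorname{dom}(\beta^i\gamma\alpha^i)=i+\operatorname{dom}\gamma$, pairwise distinctness via $\underline{n}_{\gamma_i}^{\mathbf{d}}=i+\underline{n}_{\gamma}^{\mathbf{d}}$, and order preservation in both directions), together with the subsemigroup argument for membership in $S$.
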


\begin{proof}
We define a mapping $\mathfrak{f}\colon \omega\to {\downarrow_{\ll}}\gamma$ by the formula $\mathfrak{f}(i)=\beta^i\gamma\alpha^i$. Simple verifications show $\mathfrak{f}$ is an order isomorphism of $(\omega,\geqslant)$ onto $({\downarrow_{\ll}}\gamma,\ll)$.

This implies that ${\downarrow_{\ll}}\gamma=\left\{\delta\in\mathbf{I}\mathbb{N}_{\infty}\colon \delta\ll\gamma\right\}$ is a subset of $S$.
\end{proof}

The proof of Lemma~\ref{lemma-2.9} is obvious and it follows from the definition of the set ${\downarrow_{\ll}}\gamma$ for any $\gamma\in\mathbf{I}\mathbb{N}_{\infty}$.

\begin{lemma}\label{lemma-2.9}
Let $S$ be a subsemigroup of $\mathbf{I}\mathbb{N}_{\infty}$ which contains $\mathscr{C}_\mathbb{N}$. Then for every $\gamma\in S$ the following statements hold:
\begin{enumerate}
  \item\label{lemma-2.9-1} $\beta^i\cdot{\downarrow_{\ll}}\gamma\cdot\alpha^i\subseteq {\downarrow_{\ll}}\gamma$ for any $i\in\omega$;
  \item\label{lemma-2.9-2} ${\downarrow_{\ll}}\gamma\subseteq {\downarrow_{\ll}}(\alpha^i\gamma\beta^i)$ if and only if $\underline{n}_{\gamma}^{\mathbf{d}}-i\geqslant 0$ and $\underline{n}_{\gamma}^{\mathbf{r}}-i\geqslant 0$.
\end{enumerate}
\end{lemma}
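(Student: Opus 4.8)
The plan is to read everything off the explicit description of the down-set given by Lemma~\ref{lemma-2.8}, namely ${\downarrow_{\ll}}\gamma=\{\mathfrak{f}(j)\colon j\in\omega\}=\{\beta^j\gamma\alpha^j\colon j\in\omega\}$, together with the two elementary facts that $\alpha^i\beta^i=\mathbb{I}$ and that $\beta^i\alpha^i$ is the identity map of $[i+1)$, both of which are immediate from Remark~\ref{remark-1.1}. For item~\ref{lemma-2.9-1} I would fix $\delta\in{\downarrow_{\ll}}\gamma$, write $\delta=\beta^j\gamma\alpha^j$ for some $j\in\omega$ by Lemma~\ref{lemma-2.8}, and compute
\begin{equation*}
\beta^i\delta\alpha^i=\beta^i\beta^j\gamma\alpha^j\alpha^i=\beta^{i+j}\gamma\alpha^{i+j}=\mathfrak{f}(i+j)\in{\downarrow_{\ll}}\gamma .
\end{equation*}
Since ${\downarrow_{\ll}}\gamma\subseteq S$ is already contained in Lemma~\ref{lemma-2.8}, this settles the first statement with no case distinction.

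For item~\ref{lemma-2.9-2} the first move is a purely order-theoretic reduction. Because $\ll$ is a partial order (Proposition~\ref{proposition-2.6}) and $\gamma=\mathfrak{f}(0)$ is the greatest element of the $\omega$-chain ${\downarrow_{\ll}}\gamma$, transitivity yields
\begin{equation*}
{\downarrow_{\ll}}\gamma\subseteq{\downarrow_{\ll}}(\alpha^i\gamma\beta^i)\quad\Longleftrightarrow\quad\gamma\ll\alpha^i\gamma\beta^i ;
\end{equation*}
indeed ``$\Leftarrow$'' is transitivity of $\ll$ and ``$\Rightarrow$'' follows from $\gamma\in{\downarrow_{\ll}}\gamma$. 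Thus the whole item reduces to deciding when $\gamma\ll\alpha^i\gamma\beta^i$.

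For the sufficiency I would test the witness $k=i$ and use that $\beta^i\alpha^i$ is the identity of $[i+1)$ to obtain
\begin{equation*}
\beta^i(\alpha^i\gamma\beta^i)\alpha^i=(\beta^i\alpha^i)\,\gamma\,(\beta^i\alpha^i),
\end{equation*}
which is exactly the restriction of $\gamma$ to those arguments and values lying in $[i+1)$. This product equals $\gamma$ precisely when $\operatorname{dom}\gamma\subseteq[i+1)$ and $\operatorname{ran}\gamma\subseteq[i+1)$, which by the normalisation of Remark~\ref{remark-2.5} is exactly the condition $\underline{n}_\gamma^{\mathbf{d}}-i\geqslant0$ and $\underline{n}_\gamma^{\mathbf{r}}-i\geqslant0$; under these hypotheses $\gamma=\beta^i(\alpha^i\gamma\beta^i)\alpha^i$, whence $\gamma\ll\alpha^i\gamma\beta^i$ and the ``if'' part follows.

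It is the converse that I expect to be the real work. Here I would first compute $\alpha^i\gamma\beta^i$ explicitly: invoking the canonical form of $\gamma$ (Proposition~\ref{proposition-2.4} and Remark~\ref{remark-2.5}) one sees that $\alpha^i\gamma\beta^i$ is again a partial shift with the same displacement $\underline{n}_\gamma^{\mathbf{r}}-\underline{n}_\gamma^{\mathbf{d}}$, whose domain is
\begin{equation*}
\operatorname{dom}(\alpha^i\gamma\beta^i)=\{x\in\mathbb{N}\colon x+i\in\operatorname{dom}\gamma\ \text{and}\ (x+i)\gamma\geqslant i+1\}.
\end{equation*}
The lower cut-off $i+1$ is precisely what deletes the least points of $\operatorname{dom}\gamma$ and of $\operatorname{ran}\gamma$ as soon as one of the inequalities $\underline{n}_\gamma^{\mathbf{d}}-i\geqslant0$, $\underline{n}_\gamma^{\mathbf{r}}-i\geqslant0$ fails. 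A hypothetical equality $\gamma=\beta^k(\alpha^i\gamma\beta^i)\alpha^k$ forces $\operatorname{dom}\gamma=k+\operatorname{dom}(\alpha^i\gamma\beta^i)$, and I would then have to show that this truncation cannot be undone by \emph{any} $k\in\omega$, not merely by $k=i$. Controlling simultaneously the least element of the domain and of the range — and, more delicately, the whole finite defect of $\operatorname{dom}\gamma$ — under this truncating shift, so as to rule out all $k$ at once, is the step that requires genuine care; the surrounding manipulations are the same bicyclic computations already performed in Propositions~\ref{proposition-2.6} and~\ref{proposition-2.7}.
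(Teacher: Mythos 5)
Your proof of part~(\ref{lemma-2.9-1}), your reduction of part~(\ref{lemma-2.9-2}) to the single relation $\gamma\ll\alpha^i\gamma\beta^i$, and your ``if'' half are all fine, the last one provided $\underline{n}_{\gamma}^{\mathbf{d}}$ and $\underline{n}_{\gamma}^{\mathbf{r}}$ are read as the exponents in the canonical representation of Remark~\ref{remark-2.5}, i.e.\ as $\min\operatorname{dom}\gamma-1$ and $\min\operatorname{ran}\gamma-1$. Under the paper's literal definition $\underline{n}_{\gamma}^{\mathbf{d}}=\min\operatorname{dom}\gamma$ your asserted equivalence ``$\operatorname{dom}\gamma\subseteq[i+1)$ iff $\underline{n}_{\gamma}^{\mathbf{d}}-i\geqslant 0$'' is off by one, and with that reading the ``if'' implication itself fails: for $\gamma$ the identity map of $\{1,3,4,5,\ldots\}$ and $i=1$ the numerical condition holds, yet $\alpha\gamma\beta$ is the identity map of $[2)$, whose down-set consists of the identity maps of the sets $[k+2)$, $k\in\omega$, and so misses $\gamma$. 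This off-by-one ambiguity is the paper's own; note also that the paper offers no argument to compare against, declaring the lemma to follow ``obviously'' from the definition of ${\downarrow_{\ll}}\gamma$.

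The genuine gap is the ``only if'' half of part~(\ref{lemma-2.9-2}), which you explicitly postpone as ``the step that requires genuine care'' and never carry out. No amount of care can carry it out, because that implication is false as stated, with counterexamples inside $\mathscr{C}_\mathbb{N}$ itself and hence inside every $S$ the lemma covers. For $\gamma=\mathbb{I}$ and any $i\in\omega$ we have $\alpha^i\mathbb{I}\beta^i=\alpha^i\beta^i=\mathbb{I}$ (since $\alpha\beta=\mathbb{I}$), so ${\downarrow_{\ll}}\mathbb{I}\subseteq{\downarrow_{\ll}}(\alpha^i\mathbb{I}\beta^i)$ for every $i$, while $\underline{n}_{\mathbb{I}}^{\mathbf{d}}-i<0$ for all $i\geqslant 2$ under either reading of $\underline{n}$. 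More generally, for $\gamma=\beta^m\alpha^n$ and $i>\min\{m,n\}$ one computes $\alpha^i\gamma\beta^i=\beta^{m-\min\{m,n\}}\alpha^{n-\min\{m,n\}}$ and $\gamma=\beta^{\min\{m,n\}}\cdot(\alpha^i\gamma\beta^i)\cdot\alpha^{\min\{m,n\}}$, so again $\gamma\ll\alpha^i\gamma\beta^i$. The conceptual point is that the obstruction you plan to exploit --- that the truncation effected by $\gamma\mapsto\alpha^i\gamma\beta^i$ cannot be undone by any upward shift $\delta\mapsto\beta^k\delta\alpha^k$ --- simply does not exist when $\operatorname{dom}\gamma$ and $\operatorname{ran}\gamma$ are final segments of $\mathbb{N}$: truncating a final segment yields another final segment, i.e.\ a translate of the original domain. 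The obstruction exists exactly when $\gamma\notin\mathscr{C}_\mathbb{N}$, for then $\operatorname{dom}\gamma$ has a gap, a translate must reproduce the same finite pattern of missing points above its minimum, and deleting the least points of the domain destroys that pattern. So the ``only if'' direction is provable only after restricting to $\gamma\in S\setminus\mathscr{C}_\mathbb{N}$ (and with the canonical-exponent reading of $\underline{n}$); as submitted, your attempt establishes part~(\ref{lemma-2.9-1}) and one implication of part~(\ref{lemma-2.9-2}), and the remaining implication requires amending the statement, not merely finishing your computation.
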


It is easy to check that the elements $\gamma\in\mathbf{I}\mathbb{N}_{\infty}$ such that $\underline{n}_{\gamma}^{\mathbf{d}}=0$ or $\underline{n}_{\gamma}^{\mathbf{r}}=0$ are maximal in the poset $(\mathbf{I}\mathbb{N}_{\infty},\ll)$.

If  $A=\varnothing$ then we put $n^0=0$ and $\left\langle A[n^0)\right\rangle=\mathscr{C}_\mathbb{N}$.
Also, for any non-empty finite subset $A\subset \mathbb{N}$ such that $\min A=1$ and for any positive integer $n^0\geqslant 2+\max A$ we denote
\begin{equation*}
  \left\langle A[n^0)\right\rangle=\left\{\varepsilon^{n^0}_A[i)\cdot \beta^i\alpha^j\colon i,j\in\omega\right\}.
\end{equation*}

The proof of Lemma~\ref{lemma-2.1} and Proposition~\ref{proposition-2.4} imply the following proposition.

\begin{proposition}\label{proposition-2.11}
Let $S$ be a subsemigroup of $\mathbf{I}\mathbb{N}_{\infty}$ which contains $\mathscr{C}_\mathbb{N}$. Let $A$ be a non-empty finite subset of $\mathbb{N}$ such that $\min A=1$ and let $n^0$ be a positive integer such that $n^0\geqslant 2+\max A$. If $\left\langle A[n^0)\right\rangle\cap S\neq\varnothing$ then $\left\langle A[n^0)\right\rangle\subseteq S$.
\end{proposition}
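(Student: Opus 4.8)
The plan is to extract from any single element of $\left\langle A[n^0)\right\rangle\cap S$ the idempotent generating its domain, and then to manufacture every element of $\left\langle A[n^0)\right\rangle$ as a product of such an idempotent with an element of $\mathscr{C}_\mathbb{N}$; since $\mathscr{C}_\mathbb{N}\subseteq S$ and $S$ is a semigroup, membership in $S$ will follow at once.

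First I would fix $\gamma\in\left\langle A[n^0)\right\rangle\cap S$ and, by Proposition~\ref{proposition-2.4}, write it in canonical form $\gamma=\varepsilon^{n^0}_A[i_0)\cdot\beta^{i_0}\alpha^{j_0}$ for some $i_0,j_0\in\omega$. Following the construction in the proof of Lemma~\ref{lemma-2.1}, I would put $\gamma_0=\beta^{j_0}\alpha^{i_0}\in\mathscr{C}_\mathbb{N}\subseteq S$, so that $\gamma\gamma_0\in S$. Using $\alpha^{j_0}\beta^{j_0}=\mathbb{I}$ together with the fact that $\beta^{i_0}\alpha^{i_0}$ is the identity map of $[i_0+1)\supseteq i_0+A[n^0)$, a direct computation gives $\gamma\gamma_0=\varepsilon^{n^0}_A[i_0)$; hence the idempotent $\varepsilon^{n^0}_A[i_0)$ belongs to $S$.

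The next step is to obtain all the idempotents $\varepsilon^{n^0}_A[i)$, $i\in\omega$, from $\varepsilon^{n^0}_A[i_0)$. The point is that conjugation by powers of $\alpha$ and $\beta$ only translates the base point of the generating set:
\begin{equation*}
\beta^{k}\cdot\varepsilon^{n^0}_A[i)\cdot\alpha^{k}=\varepsilon^{n^0}_A[i+k) \qquad\text{and}\qquad \alpha^{k}\cdot\varepsilon^{n^0}_A[i)\cdot\beta^{k}=\varepsilon^{n^0}_A[i-k)\quad(k\leqslant i).
\end{equation*}
Since $\alpha,\beta\in\mathscr{C}_\mathbb{N}\subseteq S$ and $S$ is closed under multiplication, applying the first identity to $\varepsilon^{n^0}_A[i_0)$ with $k=i-i_0$ when $i\geqslant i_0$, and the second with $k=i_0-i$ when $i<i_0$, yields $\varepsilon^{n^0}_A[i)\in S$ for every $i\in\omega$. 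Finally, for arbitrary $i,j\in\omega$ the element $\varepsilon^{n^0}_A[i)\cdot\beta^i\alpha^j$ is the product of $\varepsilon^{n^0}_A[i)\in S$ and $\beta^i\alpha^j\in\mathscr{C}_\mathbb{N}\subseteq S$, hence it lies in $S$; as these elements are exactly the members of $\left\langle A[n^0)\right\rangle$, we conclude that $\left\langle A[n^0)\right\rangle\subseteq S$.

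I expect the only genuinely delicate point to be the verification of the two displayed conjugation identities, which reduces to checking that translating the set $i+A[n^0)$ by $k$ (downward only when $k\leqslant i$, so that no point drops below $1$) yields again a set of the canonical form, with no points lost or gained; this is precisely where the standing hypotheses $\min A=1$ and $n^0\geqslant\max A+2$ keep $A[n^0)$ a rigid translate. The remainder is purely formal, resting only on $\mathscr{C}_\mathbb{N}\subseteq S$ and the closure of $S$ under the semigroup operation.
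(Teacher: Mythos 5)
Your proof is correct and follows essentially the route the paper intends: the paper derives Proposition~\ref{proposition-2.11} from the proof of Lemma~\ref{lemma-2.1} (multiplying an element of $\langle A[n^0)\rangle\cap S$ by suitable elements of $\mathscr{C}_\mathbb{N}$ to recover the idempotent $\varepsilon^{n^0}_A[i_0)$) combined with the decomposition of Proposition~\ref{proposition-2.4}, which is exactly your argument made explicit. The conjugation identities you verify are the paper's Lemma~\ref{lemma-2.12}, so your write-up uses no tools beyond those the paper itself invokes.
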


The proof of the following lemma is a routine verification which follows from the definition of an idempotent $\varepsilon^{n^0}_A[i)\in \mathbf{I}\mathbb{N}_{\infty}$.

\begin{lemma}\label{lemma-2.12}
For an arbitrary idempotent $\varepsilon^{n^0}_A[i)\in\mathbf{I}\mathbb{N}_{\infty}{\setminus}\mathscr{C}_\mathbb{N}$ the following conditions hold:
\begin{enumerate}
  \item\label{lemma-2.12-1} $\alpha^p\cdot\varepsilon^{n^0}_A[i)=\varepsilon^{n^0}_A[i-p)\cdot\alpha^p$ for any non-negative integer $p\leqslant i$;
  \item\label{lemma-2.12-2} $\beta^q\cdot\varepsilon^{n^0}_A[i)=\varepsilon^{n^0}_A[i+q)\cdot\beta^q$ for any non-negative integer $q$;
  \item\label{lemma-2.12-3} $\varepsilon^{n^0}_A[i)\cdot\alpha^p=\alpha^p\cdot\varepsilon^{n^0}_A[i+p)$ for any non-negative integer $p$;
  \item\label{lemma-2.12-4} $\varepsilon^{n^0}_A[i)\cdot\beta^q=\beta^q\cdot\varepsilon^{n^0}_A[i-q)$ for any non-negative integer $q\leqslant i$
\end{enumerate}
\end{lemma}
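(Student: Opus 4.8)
The plan is to read all four equalities as identities of partial bijections of $\mathbb{N}$ inside the inverse semigroup $\mathbf{I}\mathbb{N}_{\infty}$, where two elements coincide precisely when they share the same domain and act by the same rule on it. In the right-action convention of the paper, $\alpha^p$ is the shift $x\mapsto x+p$ with $\operatorname{dom}\alpha^p=\mathbb{N}$, $\beta^q$ is the shift $x\mapsto x-q$ with $\operatorname{dom}\beta^q=[q+1)$, and $\varepsilon^{n^0}_A[i)$ is the restriction of the identity map to the set $i+A[n^0)$; since $\min A=1$, the least element of $i+A[n^0)$ is $i+1$. The domain of a composite $\gamma\delta$ is $\{x\in\operatorname{dom}\gamma\colon (x)\gamma\in\operatorname{dom}\delta\}$, so in each case it suffices to chase this set through the translation induced by the shift factor and to check that both sides act identically on it.

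First I would settle the two equalities that involve $\alpha$. For \eqref{lemma-2.12-3} both composites are partial shifts by $p$: the left side $\varepsilon^{n^0}_A[i)\cdot\alpha^p$ has domain $i+A[n^0)$, while the right side $\alpha^p\cdot\varepsilon^{n^0}_A[i+p)$ has domain $\{x\in\mathbb{N}\colon x+p\in(i+p)+A[n^0)\}=i+A[n^0)$, and both send $x\mapsto x+p$ there. The computation for \eqref{lemma-2.12-1} is the mirror image: the left side $\alpha^p\cdot\varepsilon^{n^0}_A[i)$ has domain $\{x\in\mathbb{N}\colon x+p\in i+A[n^0)\}=(i-p)+A[n^0)$, which is a genuine subset of $\mathbb{N}$ exactly because $p\leqslant i$ forces the base point $i-p$ to lie in $\omega$, and this matches the domain of $\varepsilon^{n^0}_A[i-p)\cdot\alpha^p$.

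The equalities \eqref{lemma-2.12-2} and \eqref{lemma-2.12-4} for $\beta$ go through identically, except for one bookkeeping point that I regard as the only thing requiring attention. Because $\operatorname{dom}\beta^q=[q+1)$, any composite with $\beta^q$ carries the side condition $x\geqslant q+1$, and I must confirm this condition is vacuous on the set that is actually produced: for \eqref{lemma-2.12-2} that set is $(i+q)+A[n^0)$, whose least element $i+q+1$ is automatically at least $q+1$, while for \eqref{lemma-2.12-4} it is $i+A[n^0)$, whose least element $i+1$ is at least $q+1$ precisely because $q\leqslant i$. Once the $\beta^q$-constraint is seen to be inactive, both sides of each equality have the same domain and act as the shift by $-q$, and the hypothesis $q\leqslant i$ in \eqref{lemma-2.12-4} is simultaneously what keeps the reindexed base point $i-q$ in $\omega$. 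The verification is thus entirely routine; the identities hold because applying $\alpha^p$ or $\beta^q$ to the generating set $A[n^0)$ merely reindexes the base point $i$, and the order hypotheses $p\leqslant i$, $q\leqslant i$ serve only to keep that base point non-negative and to absorb the partial domain of $\beta^q$. I expect no real obstacle beyond this careful tracking of endpoints.
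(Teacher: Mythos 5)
Your verification is correct, and it is exactly the argument the paper has in mind: the paper dismisses this lemma as ``a routine verification which follows from the definition of an idempotent $\varepsilon^{n^0}_A[i)$,'' and your proposal simply carries out that verification, correctly tracking domains and actions of the composites (including the two genuine points of care: the constraint $\operatorname{dom}\beta^q=[q+1)$ being inactive, and $p\leqslant i$, $q\leqslant i$ keeping the shifted base point non-negative).
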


\begin{proposition}\label{proposition-2.13}
Let $A$ be a non-empty finite subset of $\mathbb{N}$ such that $\min A=1$ and let $n^0$ be a positive integer such that $n^0\geqslant 2+\max A$. Then ${\downarrow_{\ll}}\left\langle A[n^0)\right\rangle=\left\langle A[n^0)\right\rangle$.
\end{proposition}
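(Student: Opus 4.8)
The plan is to establish the two set inclusions separately. Since $\ll$ is reflexive by Proposition~\ref{proposition-2.6}, every $\gamma\in\left\langle A[n^0)\right\rangle$ lies in ${\downarrow_{\ll}}\gamma\subseteq{\downarrow_{\ll}}\left\langle A[n^0)\right\rangle$, which gives the inclusion $\left\langle A[n^0)\right\rangle\subseteq{\downarrow_{\ll}}\left\langle A[n^0)\right\rangle$ for free. The real content of the statement is therefore the reverse inclusion ${\downarrow_{\ll}}\left\langle A[n^0)\right\rangle\subseteq\left\langle A[n^0)\right\rangle$, i.e.\ that passing to the $\ll$-down-set of $\left\langle A[n^0)\right\rangle$ adds no new elements.

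For the nontrivial direction I would take an arbitrary $\delta\in{\downarrow_{\ll}}\left\langle A[n^0)\right\rangle$, so that $\delta\ll\gamma$ for some $\gamma\in\left\langle A[n^0)\right\rangle$. By the definition of $\ll$ (equivalently, by Lemma~\ref{lemma-2.8}) this means $\delta=\beta^k\gamma\alpha^k$ for some $k\in\omega$. Writing $\gamma$ in its defining form $\gamma=\varepsilon^{n^0}_A[i)\cdot\beta^i\alpha^j$, I substitute and commute $\beta^k$ past the idempotent using Lemma~\ref{lemma-2.12}\eqref{lemma-2.12-2}, namely $\beta^k\cdot\varepsilon^{n^0}_A[i)=\varepsilon^{n^0}_A[i+k)\cdot\beta^k$. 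This yields
\begin{equation*}
\delta=\beta^k\cdot\varepsilon^{n^0}_A[i)\cdot\beta^i\alpha^j\cdot\alpha^k=\varepsilon^{n^0}_A[i+k)\cdot\beta^{k+i}\alpha^{j+k}.
\end{equation*}

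The key observation, and essentially the only point that requires care, is that the resulting expression is already in the canonical shape defining $\left\langle A[n^0)\right\rangle$: the exponent $i+k$ of $\beta$ coincides with the shift-index of the idempotent $\varepsilon^{n^0}_A[i+k)$, while the exponent of $\alpha$ is $j+k\in\omega$. Hence $\delta=\varepsilon^{n^0}_A[i+k)\cdot\beta^{i+k}\alpha^{j+k}\in\left\langle A[n^0)\right\rangle$, which establishes ${\downarrow_{\ll}}\left\langle A[n^0)\right\rangle\subseteq\left\langle A[n^0)\right\rangle$ and completes the proof. I expect no genuine obstacle here: the whole argument collapses to the single commutation identity of Lemma~\ref{lemma-2.12}\eqref{lemma-2.12-2}, and the reason conjugation by $\beta^k$ and $\alpha^k$ keeps us inside $\left\langle A[n^0)\right\rangle$ is precisely that it increments the idempotent index and the $\beta$-exponent by the \emph{same} amount $k$, thereby preserving the matching of the two that characterizes membership in $\left\langle A[n^0)\right\rangle$.
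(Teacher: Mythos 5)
Your proof is correct and follows essentially the same route as the paper: write the element of $\left\langle A[n^0)\right\rangle$ in the form $\varepsilon^{n^0}_A[i)\cdot\beta^i\alpha^j$, conjugate by $\beta^k$ and $\alpha^k$, and use the commutation identity of Lemma~\ref{lemma-2.12} to land back in $\left\langle A[n^0)\right\rangle$ as $\varepsilon^{n^0}_A[i+k)\cdot\beta^{i+k}\alpha^{j+k}$. The only (cosmetic) differences are that you spell out the trivial reflexivity inclusion, which the paper leaves implicit, and that you apply Lemma~\ref{lemma-2.12}\eqref{lemma-2.12-2} directly instead of the equivalent conjugation form $\beta^k\cdot\varepsilon^{n^0}_A[i)\cdot\alpha^k=\varepsilon^{n^0}_A[i+k)$ used in the paper.
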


\begin{proof}
Fix an arbitrary $\delta\in \left\langle A[n^0)\right\rangle$. Then $\delta=\varepsilon^{n^0}_A[i)\cdot \beta^i\alpha^j$ for some $i,j\in\omega$. Lemma~\ref{lemma-2.12} implies that $\beta^k\cdot\varepsilon^{n^0}_A[i)\cdot\alpha^k=\varepsilon^{n^0}_A[i+k)$ for any $k\in\omega$. This implies that for any $k\in\omega$ we have that
\begin{align*}
  \beta^k\cdot\delta\cdot\alpha^k&= \beta^k\cdot\varepsilon^{n^0}_A[i)\cdot \beta^i\alpha^j\cdot\alpha^k=\\
  &= \beta^k\cdot\varepsilon^{n^0}_A[i)\cdot \alpha^k\beta^k\cdot \beta^i\alpha^j\cdot\alpha^k=\\
  &= \varepsilon^{n^0}_A[i+k)\cdot \beta^{i+k}\alpha^{j+k}\in \left\langle A[n^0)\right\rangle,
\end{align*}
and hence ${\downarrow_{\ll}}\delta\in \left\langle A[n^0)\right\rangle$. This implies the equality ${\downarrow_{\ll}}\left\langle A[n^0)\right\rangle=\left\langle A[n^0)\right\rangle$.
\end{proof}

\begin{lemma}\label{lemma-2.14}
Let $\varepsilon^{n^0_1}_{A_1}[i_1)$ and $\varepsilon^{n^0_2}_{A_2}[i_2)$ be idempotents of the monoid $\mathbf{I}\mathbb{N}_{\infty}$. Then:
\begin{enumerate}
  \item\label{lemma-2.14-1} $\varepsilon^{n^0_1}_{A_1}[i_1)\beta^{i_1}\alpha^{i_1}$ is the canonical representation of $\varepsilon^{n^0_1}_{A_1}[i_1)$;
  \item\label{lemma-2.14-2} $\varepsilon^{n^0_1}_{A_1}[i_1)\ll\varepsilon^{n^0_2}_{A_2}[i_2)$ if and only if $A_1=A_2$, $n^0_1=n^0_2$ and $i_1\geqslant i_2$;
  \item\label{lemma-2.14-3} if $\delta_1=\varepsilon^{n^0_1}_{A_1}[i_1)\beta^{i_1}\alpha^{j_1}$ and $\delta_2=\varepsilon^{n^0_2}_{A_1}[i_2)\beta^{i_2}\alpha^{j_2}$ are canonical representations, then  $\delta_1\ll\delta_2$ if and only if $A_1=A_2$, $n^0_1=n^0_2$ and $i_1-i_2=j_1-j_2=k$ for some $k\in\omega$.
\end{enumerate}
\end{lemma}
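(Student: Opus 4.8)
The statement splits into three independent claims about idempotents and canonical representations, and the natural strategy is to attack them in order, using the commutation relations from Lemma~\ref{lemma-2.12} as the main computational engine and Proposition~\ref{proposition-2.7} together with the definition of $\ll$ as the conceptual backbone. First I would establish claim~\eqref{lemma-2.14-1}. By Proposition~\ref{proposition-2.4} applied to the idempotent $\gamma=\varepsilon^{n^0_1}_{A_1}[i_1)$, whose domain equals its range $i_1+A_1[n^0_1)$, the canonical representation has $i=\underline{n}_{\gamma}^{\mathbf{d}}=i_1$ and $j=\underline{n}_{\gamma}^{\mathbf{r}}=i_1$, since the identity map fixes its domain pointwise so that $\min\operatorname{dom}\gamma=\min\operatorname{ran}\gamma=i_1$. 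Hence the canonical representation is $\varepsilon^{n^0_1}_{A_1}[i_1)\beta^{i_1}\alpha^{i_1}$, as asserted; this is essentially a bookkeeping verification from Remark~\ref{remark-2.5}.

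For claim~\eqref{lemma-2.14-2} I would argue both directions directly from the definition of $\ll$. For sufficiency, assuming $A_1=A_2=A$, $n^0_1=n^0_2=n^0$ and $i_1\geqslant i_2$, set $k=i_1-i_2\geqslant 0$; then Lemma~\ref{lemma-2.12}\eqref{lemma-2.12-2} and \eqref{lemma-2.12-1} give $\beta^k\cdot\varepsilon^{n^0}_A[i_2)\cdot\alpha^k=\varepsilon^{n^0}_A[i_2+k)=\varepsilon^{n^0}_A[i_1)$ (this is exactly the identity already computed inside the proof of Proposition~\ref{proposition-2.13}), which by definition says $\varepsilon^{n^0}_A[i_1)\ll\varepsilon^{n^0}_A[i_2)$. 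For necessity, suppose $\varepsilon^{n^0_1}_{A_1}[i_1)\ll\varepsilon^{n^0_2}_{A_2}[i_2)$, so there is $k\in\omega$ with $\varepsilon^{n^0_1}_{A_1}[i_1)=\beta^k\,\varepsilon^{n^0_2}_{A_2}[i_2)\,\alpha^k$; applying Lemma~\ref{lemma-2.12} to rewrite the right-hand side as $\varepsilon^{n^0_2}_{A_2}[i_2+k)$ forces the two idempotents to have equal domains, whence $i_1+A_1[n^0_1)=(i_2+k)+A_2[n^0_2)$. Matching the finite "low" parts and the terminal rays of these cofinite sets then yields $A_1=A_2$, $n^0_1=n^0_2$ and $i_1=i_2+k\geqslant i_2$.

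Claim~\eqref{lemma-2.14-3} is the substantive one and I would reduce it to the previous parts. Given canonical representations $\delta_s=\varepsilon^{n^0_s}_{A_s}[i_s)\beta^{i_s}\alpha^{j_s}$, note that $\delta_s$ is the shift $x\mapsto x-(i_s-j_s)$ on $\operatorname{dom}\delta_s=i_s+A_s[n^0_s)$, so that $\delta_s\delta_s^{-1}=\varepsilon^{n^0_s}_{A_s}[i_s)$ and $\delta_s^{-1}\delta_s=\varepsilon^{n^0_s}_{A_s}[j_s)$ are the domain and range idempotents. The key observation is that $\gamma\ll\delta$ should be compatible with the idempotent order: if $\delta_1\ll\delta_2$ via $\delta_1=\beta^k\delta_2\alpha^k$, then conjugating by $\beta^k,\alpha^k$ carries the idempotent $\delta_2\delta_2^{-1}$ to $\delta_1\delta_1^{-1}$, so $\varepsilon^{n^0_1}_{A_1}[i_1)\ll\varepsilon^{n^0_2}_{A_2}[i_2)$, and part~\eqref{lemma-2.14-2} immediately gives $A_1=A_2$, $n^0_1=n^0_2$ and $i_1=i_2+k$; a symmetric argument with the range idempotents gives $j_1=j_2+k$, so $i_1-i_2=j_1-j_2=k$. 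Conversely, given $A_1=A_2$, $n^0_1=n^0_2$ and $i_1-i_2=j_1-j_2=k\geqslant 0$, a direct computation using Lemma~\ref{lemma-2.12} to slide the $\beta^k$ past $\varepsilon^{n^0_2}_{A_2}[i_2)$ shows $\beta^k\delta_2\alpha^k=\varepsilon^{n^0_2}_{A_2}[i_2+k)\beta^{i_2+k}\alpha^{j_2+k}=\delta_1$, i.e.\ $\delta_1\ll\delta_2$. The main obstacle I anticipate is the necessity direction of part~\eqref{lemma-2.14-3}: one must be careful that the conjugation $\delta\mapsto\beta^k\delta\alpha^k$ genuinely intertwines with taking domain and range idempotents (so that $\ll$ on $\delta$'s forces $\ll$ on the associated $\varepsilon$'s with the \emph{same} shift $k$), and that the canonicality hypothesis — which pins $i_s=\underline{n}^{\mathbf d}_{\delta_s}$ and $j_s=\underline{n}^{\mathbf r}_{\delta_s}$ via Remark~\ref{remark-2.5} — is what prevents the exponents from being shifted independently.
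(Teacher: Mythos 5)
Your proof is correct, but it takes a genuinely different route from the paper's, most notably in part~\eqref{lemma-2.14-3}. For the necessity direction there, the paper rewrites each canonical form as $\beta^{i_s}\varepsilon^{n^0_s}_{A_s}[0)\alpha^{j_s}$ via Lemma~\ref{lemma-2.12}\eqref{lemma-2.12-4} and then appeals to uniqueness of this normalized representation to read off $A_1=A_2$, $n^0_1=n^0_2$, $i_1=i_2+k$, $j_1=j_2+k$ from the single equality $\beta^{i_1}\varepsilon^{n^0_1}_{A_1}[0)\alpha^{j_1}=\beta^{i_2+k}\varepsilon^{n^0_2}_{A_2}[0)\alpha^{j_2+k}$. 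You instead reduce to the idempotent case: from $\delta_1=\beta^k\delta_2\alpha^k$ and the identity $\alpha^k\beta^k=\mathbb{I}$ you get $\delta_1\delta_1^{-1}=\beta^k(\delta_2\delta_2^{-1})\alpha^k$ and $\delta_1^{-1}\delta_1=\beta^k(\delta_2^{-1}\delta_2)\alpha^k$, and then invoke part~\eqref{lemma-2.14-2} twice. Similarly, in part~\eqref{lemma-2.14-2} the paper again leans on uniqueness of canonical representations, whereas you match the domains as subsets of $\mathbb{N}$ (the minimum pins down $i$, the largest element of the finite complement pins down $n^0$, hence $A$). What your route buys is self-containment: the paper's phrase ``since every element has the unique canonical representation'' is asserted rather than proved, while your set-matching argument effectively supplies that uniqueness, and your reduction of (3) to (2) makes the logical dependence explicit and exploits the inverse-semigroup structure cleanly. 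One point to make explicit if you write this up: the bare statement of part~\eqref{lemma-2.14-2} only gives $i_1\geqslant i_2$, whereas your argument for (3) needs the sharper fact that the conjugating exponent is forced, i.e.\ $\varepsilon^{n^0_1}_{A_1}[i_1)=\beta^k\varepsilon^{n^0_2}_{A_2}[i_2)\alpha^k$ implies $i_1=i_2+k$ for that same $k$; this does follow from your proof of (2) (since $\beta^k\varepsilon^{n^0_2}_{A_2}[i_2)\alpha^k=\varepsilon^{n^0_2}_{A_2}[i_2+k)$ and domains must coincide), but it should be recorded as the statement you actually apply.
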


\begin{proof}
Statement \eqref{lemma-2.14-1} is trivial because $i_1=\min \mathrm{dom}\varepsilon^{n^0_1}_{A_1}[i_1)=\min \mathrm{ran}\varepsilon^{n^0_1}_{A_1}[i_1)$.

\eqref{lemma-2.14-2}
By \eqref{lemma-2.14-1}, $\varepsilon^{n^0_1}_{A_1}[i_1)\beta^{i_1}\alpha^{i_1}$ and $\varepsilon^{n^0_2}_{A_2}[i_2)\beta^{i_2}\alpha^{i_2}$ are the canonical representations of idempotents  $\varepsilon^{n^0_1}_{A_1}[i_1)$ and $\varepsilon^{n^0_2}_{A_2}[i_2)$, respectively.

The relation $\varepsilon^{n^0_1}_{A_1}[i_1)\ll\varepsilon^{n^0_2}_{A_2}[i_2)$ holds if and only if $\varepsilon^{n^0_1}_{A_1}[i_1)=\beta^k\cdot\varepsilon^{n^0_2}_{A_2}[i_2)\cdot\alpha^k$ for some $k\in\omega$. By Lemma~\ref{lemma-2.12}\eqref{lemma-2.12-2} we have that
\begin{align*}
  \varepsilon^{n^0_1}_{A_1}[i_1)&=\varepsilon^{n^0_1}_{A_1}[i_1)\beta^{i_1}\alpha^{i_1}=\\
  &=\beta^k\cdot\varepsilon^{n^0_2}_{A_2}[i_2)\beta^{i_2}\alpha^{i_2}\cdot\alpha^k=\\
  &=\varepsilon^{n^0_2}_{A_2}[i_2+k)\beta^{i_2+k}\alpha^{i_2+k},
\end{align*}
and hence by \eqref{lemma-2.14-1}, $\varepsilon^{n^0_1}_{A_1}[i_1)\ll\varepsilon^{n^0_2}_{A_2}[i_2)$ if and only if  $A_1=A_2$, $n^0_1=n^0_2$ and $i_1\geqslant i_2$.

\eqref{lemma-2.14-3}
Implication $(\Leftarrow)$ is obvious.

$(\Rightarrow)$
If $\delta_1\ll\delta_2$ then there exists $k\in\omega$ such that $\delta_1=\beta^k\delta_2\alpha^k$ for some $k\in\omega$. By Lemma~\ref{lemma-2.12}\eqref{lemma-2.12-4} we have that
\begin{align*}
  \delta_1&=\varepsilon^{n^0_1}_{A_1}[i_1)\beta^{i_1}\alpha^{j_1}=\beta^{i_1}\varepsilon^{n^0_1}_{A_1}[0)\alpha^{j_1}; \\
  \delta_2&=\varepsilon^{n^0_2}_{A_2}[i_2)\beta^{i_2}\alpha^{j_2}=\beta^{i_2}\varepsilon^{n^0_2}_{A_2}[0)\alpha^{j_2},
\end{align*}
and since every element $\gamma$ of the monoid $\mathbf{I}\mathbb{N}_{\infty}$ has the unique canonical representation $\gamma=\varepsilon^{n^0}_{A}[i)\beta^i\alpha^j$, the equalities
\begin{align*}
  \beta^{i_1}\varepsilon^{n^0_1}_{A_1}[0)\alpha^{j_1}&=\delta_1=\\
    &=\beta^k\delta_2\alpha^k=\\
    &=\beta^k\beta^{i_2}\varepsilon^{n^0_2}_{A_2}[0)\alpha^{j_2}\alpha^k=\\
    &=\beta^{i_2+k}\varepsilon^{n^0_2}_{A_2}[0)\alpha^{j_2+k},
\end{align*}
imply that $A_1=A_2$, $n^0_1=n^0_2$ and $i_1-i_2=j_1-j_2=k$.
\end{proof}

\begin{corollary}\label{corollary-2.15}
If $\gamma$ and $\delta$ are comparable elements of the monoid $\mathbf{I}\mathbb{N}_{\infty}$ with respect to the partial order $\ll$ then either there exist a non-empty finite subset $A\subset \mathbb{N}$ such that $\min A=1$ and a positive integer $n^0\geqslant 2+\max A$ such that $\gamma,\delta\in \left\langle A[n^0)\right\rangle$ or $\gamma,\delta\in \mathscr{C}_\mathbb{N}$.
\end{corollary}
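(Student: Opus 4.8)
The plan is to reduce the comparability of $\gamma$ and $\delta$ to membership in a single $\omega$-chain of the form ${\downarrow_{\ll}}\delta$, and then to use the fact that such a chain is trapped inside one of the blocks $\left\langle A[n^0)\right\rangle$ or inside $\mathscr{C}_\mathbb{N}$. Since the conclusion is symmetric in $\gamma$ and $\delta$, I would first assume without loss of generality that $\gamma\ll\delta$; then $\gamma\in{\downarrow_{\ll}}\delta$, and by the proof of Lemma~\ref{lemma-2.8} we have the explicit description ${\downarrow_{\ll}}\delta=\left\{\beta^i\delta\alpha^i\colon i\in\omega\right\}$.

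Next I would invoke Corollary~\ref{corollary-2.3} together with the convention that $\left\langle A[n^0)\right\rangle=\mathscr{C}_\mathbb{N}$ when $A=\varnothing$, obtaining the dichotomy: every element of $\mathbf{I}\mathbb{N}_{\infty}$ lies either in $\mathscr{C}_\mathbb{N}$ or in exactly one block $\left\langle A[n^0)\right\rangle$ with $A$ a non-empty finite subset of $\mathbb{N}$, $\min A=1$ and $n^0\geqslant 2+\max A$ (the latter via the canonical representation furnished by Proposition~\ref{proposition-2.4}). I would then apply this dichotomy to the larger element $\delta$ and split into two cases.

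If $\delta\in\mathscr{C}_\mathbb{N}$, then each $\beta^i\delta\alpha^i$ again lies in $\mathscr{C}_\mathbb{N}$, since $\mathscr{C}_\mathbb{N}$ is a submonoid containing $\alpha$ and $\beta$; hence ${\downarrow_{\ll}}\delta\subseteq\mathscr{C}_\mathbb{N}$ and in particular $\gamma\in\mathscr{C}_\mathbb{N}$, so both elements lie in $\mathscr{C}_\mathbb{N}$. If instead $\delta\notin\mathscr{C}_\mathbb{N}$, then by Corollary~\ref{corollary-2.3}$(ii)$ and Proposition~\ref{proposition-2.4} there is a unique block $\left\langle A[n^0)\right\rangle$ with $\delta\in\left\langle A[n^0)\right\rangle$. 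Now Proposition~\ref{proposition-2.13} asserts that this block is downward closed with respect to $\ll$, i.e. ${\downarrow_{\ll}}\left\langle A[n^0)\right\rangle=\left\langle A[n^0)\right\rangle$; since ${\downarrow_{\ll}}\delta\subseteq{\downarrow_{\ll}}\left\langle A[n^0)\right\rangle$, I conclude $\gamma\in\left\langle A[n^0)\right\rangle$, so both elements share the same block.

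The argument is essentially bookkeeping once the downward-closure facts are in place, so I do not expect a serious obstacle; the only points requiring care are the symmetry reduction (to treat $\delta\ll\gamma$ one simply interchanges the roles of $\gamma$ and $\delta$, as the statement is symmetric) and the correct use of the $A=\varnothing$ convention, which is what makes the two clauses of the conclusion exhaust both outcomes. As an alternative route, the whole corollary can be read off directly from Lemma~\ref{lemma-2.14}\eqref{lemma-2.14-3}: writing both canonical representations, comparability forces $A_1=A_2$ and $n^0_1=n^0_2$, which is precisely the assertion that $\gamma$ and $\delta$ lie in a common block.
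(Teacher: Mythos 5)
Your proof is correct, but it takes a slightly different route from the paper's. The paper states Corollary~\ref{corollary-2.15} without proof, immediately after Lemma~\ref{lemma-2.14}: the intended argument is your ``alternative route,'' namely writing both canonical representations and reading off from Lemma~\ref{lemma-2.14}\eqref{lemma-2.14-3} that comparability forces $A_1=A_2$ and $n^0_1=n^0_2$, hence a common block. Your main argument instead combines the explicit chain description ${\downarrow_{\ll}}\delta=\{\beta^i\delta\alpha^i\colon i\in\omega\}$ from the proof of Lemma~\ref{lemma-2.8} with the downward-closure of blocks (Proposition~\ref{proposition-2.13}) and the fact that $\mathscr{C}_\mathbb{N}$ is a submonoid containing $\alpha$ and $\beta$. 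This buys you something the one-line read-off does not literally give: Lemma~\ref{lemma-2.14} is stated for canonical representations with idempotents $\varepsilon^{n^0}_A[i)$, i.e.\ for elements outside $\mathscr{C}_\mathbb{N}$, so the mixed case (one element in $\mathscr{C}_\mathbb{N}$, the other not) requires the extra observation that conjugation by $\beta^i,\alpha^i$ preserves the ``gap pattern'' of the domain; your case split on $\delta$ handles this uniformly and makes the exhaustiveness of the two clauses transparent. Conversely, the paper's route is shorter once Lemma~\ref{lemma-2.14} is available, at the cost of resting on the uniqueness of canonical representations. One minor remark: the uniqueness of the block containing $\delta$, which you assert, is true (the domain $i+A[n^0)$ determines $i$, $n^0$ and $A$ under the normalizations $\min A=1$, $n^0\geqslant 2+\max A$) but is not needed for your argument; existence of some block containing $\delta$, together with Proposition~\ref{proposition-2.13}, already suffices.
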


\begin{lemma}\label{lemma-2.16}
Let $S$ be a subsemigroup of $\mathbf{I}\mathbb{N}_{\infty}$ which contains $\mathscr{C}_\mathbb{N}$.
Let $\gamma_0\in S$ and  $\beta^{i_1}\alpha^{j_1},\beta^{i_2}\alpha^{j_2}\in S$ be such that $\gamma_0, \beta^{i_1}\alpha^{j_1}\cdot\gamma_0\cdot\beta^{i_2}\alpha^{j_2}\in\left\langle A[n^0)\right\rangle$, where $A$ is a non-empty finite subset of $\mathbb{N}$ such that $\min A=1$ and a positive integer $n^0\geqslant 2+\max A$ or $\left\langle A[n^0)\right\rangle=\mathscr{C}_\mathbb{N}$. Then
\begin{equation*}
\beta^{i_1}\alpha^{j_1}\cdot\eta\cdot\beta^{i_2}\alpha^{j_2}\in {\downarrow_{\ll}}(\beta^{i_1}\alpha^{j_1}\cdot\gamma_0\cdot\beta^{i_2}\alpha^{j_2}) \cap\left\langle A[n^0)\right\rangle
\end{equation*}
for all $\eta\in{\downarrow_{\ll}}\gamma_0$.
\end{lemma}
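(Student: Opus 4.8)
The plan is to isolate a single order comparison and then deduce the whole statement from it. Write $\delta=\beta^{i_1}\alpha^{j_1}\cdot\gamma_0\cdot\beta^{i_2}\alpha^{j_2}$ and, for a fixed $\eta\in{\downarrow_{\ll}}\gamma_0$, put $\zeta=\beta^{i_1}\alpha^{j_1}\cdot\eta\cdot\beta^{i_2}\alpha^{j_2}$. I claim it suffices to prove that $\zeta\ll\delta$. Indeed, by hypothesis $\delta\in\left\langle A[n^0)\right\rangle$, so Proposition~\ref{proposition-2.13} yields $\zeta\in{\downarrow_{\ll}}\delta\subseteq{\downarrow_{\ll}}\left\langle A[n^0)\right\rangle=\left\langle A[n^0)\right\rangle$; together with $\zeta\ll\delta$ this is exactly the asserted membership in ${\downarrow_{\ll}}\delta\cap\left\langle A[n^0)\right\rangle$. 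Thus the problem reduces to the single relation $\zeta\ll\delta$.

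Next I would set up the comparison. By Lemma~\ref{lemma-2.8} the element $\eta$ has the form $\eta=\beta^{k}\gamma_0\alpha^{k}$ for a unique $k\in\omega$; in particular $\eta$ is the partial shift with the same shift constant as $\gamma_0$ and with $\operatorname{dom}\eta=k+\operatorname{dom}\gamma_0$. Since each factor $\beta^{i}\alpha^{j}$ and $\gamma_0$ is a partial shift (see the discussion preceding Corollary~\ref{corollary-2.3}), both $\delta$ and $\zeta$ are partial shifts of $\mathbb{N}$, and because $\eta$ and $\gamma_0$ share their shift constant, $\delta$ and $\zeta$ share theirs as well. An element of $\mathbf{I}\mathbb{N}_{\infty}$ is determined by its shift constant together with its domain, and ${\downarrow_{\ll}}\delta=\{\beta^{m}\delta\alpha^{m}\colon m\in\omega\}$ consists precisely of the partial shifts with this common shift constant and domain $m+\operatorname{dom}\delta$. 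Hence $\zeta\ll\delta$ is equivalent to the single condition that $\operatorname{dom}\zeta=m+\operatorname{dom}\delta$ for some $m\in\omega$; that is, it remains to show that passing from $\gamma_0$ to its restriction $\eta$ translates the domain of the composite upward by a non-negative amount.

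To verify this I would pass to canonical representations. Writing $\gamma_0=\varepsilon^{n^0}_{A}[i_0)\beta^{i_0}\alpha^{j_0}$ canonically, Lemma~\ref{lemma-2.12}\eqref{lemma-2.12-2} gives $\eta=\varepsilon^{n^0}_{A}[i_0+k)\beta^{i_0+k}\alpha^{j_0+k}$. I would then compute the canonical forms of $\delta$ and $\zeta$ by transporting the idempotents $\varepsilon^{n^0}_{A}[\,\cdot\,)$ across the surrounding powers of $\alpha$ and $\beta$ by means of Lemma~\ref{lemma-2.12}, and finally compare the two results through the criterion of Lemma~\ref{lemma-2.14}\eqref{lemma-2.14-3}. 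The $\mathscr{C}_\mathbb{N}$-case ($A=\varnothing$) is handled in the same way with the trivial idempotent factor.

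The hard part will be this last computation, because it is genuinely case-dependent: the bicyclic reductions $\alpha^{j_1}\varepsilon^{n^0}_{A}[\,\cdot\,)$ and $\alpha^{j_0+k}\beta^{i_2}$ branch according to the relative sizes of the exponents and indices (for instance whether $j_1\leqslant i_0$, or whether $j_0+k$ exceeds $i_2$), and as a result the integer $m$ with $\zeta=\beta^{m}\delta\alpha^{m}$ is in general \emph{not} equal to $k$. The role of the hypothesis $\delta\in\left\langle A[n^0)\right\rangle$ is precisely to guarantee that, after all the reductions, the composite retains the head--tail shape $A[n^0)$; this forces the several cases to collapse to the uniform conclusion $\operatorname{dom}\zeta=m+\operatorname{dom}\delta$ with $m\geqslant 0$, which by the reduction above completes the proof.
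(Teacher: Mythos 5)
Your opening reduction is sound: once $\zeta=\beta^{i_1}\alpha^{j_1}\cdot\eta\cdot\beta^{i_2}\alpha^{j_2}\ll\delta=\beta^{i_1}\alpha^{j_1}\cdot\gamma_0\cdot\beta^{i_2}\alpha^{j_2}$ is established, Proposition~\ref{proposition-2.13} indeed gives $\zeta\in{\downarrow_{\ll}}\delta\subseteq{\downarrow_{\ll}}\left\langle A[n^0)\right\rangle=\left\langle A[n^0)\right\rangle$, and your reformulation of $\zeta\ll\delta$ as ``equal shift constants and $\operatorname{dom}\zeta=m+\operatorname{dom}\delta$ for some $m\in\omega$'' is a correct reading of $\ll$ on $\mathbf{I}\mathbb{N}_{\infty}$. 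The problem is that your proof stops exactly where the lemma's content begins. Everything from ``To verify this I would pass to canonical representations'' onward is a declaration of intent: the computation is never performed, and the sentence that is supposed to close the argument --- that the hypothesis $\delta\in\left\langle A[n^0)\right\rangle$ ``forces the several cases to collapse to the uniform conclusion'' --- is precisely the assertion to be proved, not an argument for it. Nowhere do you identify \emph{how} that hypothesis interacts with the bicyclic reductions.

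The missing mechanism is concrete, and it is where the paper's proof does its work. For $A\neq\varnothing$, write $\gamma_0=\varepsilon^{n^0}_A[i_0)\beta^{i_0}\alpha^{j_0}$ canonically. The hypothesis that \emph{both} $\gamma_0$ and $\delta$ lie in $\left\langle A[n^0)\right\rangle$ (same $A$, same $n^0$) forces $j_1\leqslant i_0$ and $i_2\leqslant j_0$: otherwise the outer factors truncate the head of $\operatorname{dom}\gamma_0$ from below, and a truncated copy of $i_0+A[n^0)$ is never a translate of the same $A[n^0)$ (if anything of the head survives, the tail still starts at $i_0+n^0$, forcing the head to be all of $i_0+A$; if nothing survives, the domain is a ray, which $A[n^0)$ is not). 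With these two inequalities there is \emph{no} case branching at all: Lemma~\ref{lemma-2.12} pushes the idempotent through and yields $\delta=\varepsilon^{n^0}_A[i_0-j_1+i_1)\beta^{i_0-j_1+i_1}\alpha^{j_0-i_2+j_2}$ and $\zeta=\varepsilon^{n^0}_A[i_0-j_1+i_1+k)\beta^{i_0-j_1+i_1+k}\alpha^{j_0-i_2+j_2+k}$, whence $\zeta\ll\delta$ by Lemma~\ref{lemma-2.14}\eqref{lemma-2.14-3} with $m=k$ exactly --- contrary to your remark that $m\neq k$ ``in general''. Branching (and $m\neq k$) can genuinely occur only when $\left\langle A[n^0)\right\rangle=\mathscr{C}_\mathbb{N}$, where the hypothesis is vacuous because a truncated ray is again a ray; but there the paper does not compute either: it invokes Proposition~\ref{proposition-2.7} and the compatibility of the natural partial order with multiplication (Proposition~1.4.7 of \cite{Lawson-1998}). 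Your plan proposes to treat that case ``in the same way'', i.e.\ by the same unexecuted computation, so the one case in which your branching worry is real is also left unproved.
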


\begin{proof}
By Proposition~\ref{proposition-2.11} if $\left\langle A[n^0)\right\rangle\cap\mathscr{C}_\mathbb{N}\neq\varnothing$ then $\left\langle A[n^0)\right\rangle=\mathscr{C}_\mathbb{N}$.

In the case when $\left\langle A[n^0)\right\rangle=\mathscr{C}_\mathbb{N}$ by Proposition~\ref{proposition-2.7}  the restriction of the partial order $\ll$ onto $\mathscr{C}_\mathbb{N}$ is the natural partial order on $\mathscr{C}_\mathbb{N}$, and next we apply Proposition~1.4.7 of \cite{Lawson-1998}.

Suppose that $\left\langle A[n^0)\right\rangle\subseteq S\setminus\mathscr{C}_\mathbb{N}$ and let $\gamma_0=\varepsilon^{n^0}_A[i_0)\beta^{i_0}\alpha^{j_0}$ be the canonical representation of $\gamma_0$.
Since $\gamma_0, \beta^{i_1}\alpha^{j_1}\cdot\gamma_0\cdot\beta^{i_2}\alpha^{j_2}\in\left\langle A[n^0)\right\rangle$ we have that $j_1\leqslant i_0$ and $i_2\leqslant j_0$.
Lemma~\ref{lemma-2.12} implies that
\begin{align*}
  \beta^{i_1}\alpha^{j_1}\cdot\gamma_0\cdot\beta^{i_2}\alpha^{j_2}
   &=\beta^{i_1}\alpha^{j_1}\cdot\varepsilon^{n^0}_A[i_0)\beta^{i_0}\alpha^{j_0}\cdot\beta^{i_2}\alpha^{j_2} \\
   &=\beta^{i_1}\cdot\varepsilon^{n^0}_A[i_0-j_1)\alpha^{j_1}\beta^{i_0}\alpha^{j_0}\beta^{i_2}\alpha^{j_2}= \\
   &=\varepsilon^{n^0}_A[i_0-j_1+i_1)\beta^{i_1}\alpha^{j_1}\beta^{i_0}\alpha^{j_0}\beta^{i_2}\alpha^{j_2}= \\
   &=\varepsilon^{n^0}_A[i_0-j_1+i_1)\beta^{i_0-j_1+i_1}\alpha^{j_0-i_2+j_2}.
\end{align*}
Fix an arbitrary $k\in\omega$ and put $\eta=b^{k}\gamma_0\alpha^{k}$. Then by Lemma~\ref{lemma-2.12} for any $k\geqslant k_0$ we have that
\begin{align*}
  \beta^{k}\gamma_0\alpha^{k}
    &=\beta^{k}\varepsilon^{n^0}_A[i_0)\beta^{i_0}\alpha^{j_0}\alpha^{k}=\\
    &=\varepsilon^{n^0}_A[i_0+k)\beta^{k}\beta^{i_0}\alpha^{j_0}\alpha^{k}=\\
    &=\varepsilon^{n^0}_A[i_0+k)\beta^{i_0+k}\alpha^{j_0+k}
\end{align*}
and since $j_1\leqslant i_0$ and $i_2\leqslant j_0$ we get that
\begin{align*}
  \beta^{i_1}\alpha^{j_1}\cdot \beta^{k}\gamma_0\alpha^{k}\cdot\beta^{i_2}\alpha^{j_2}
   &= \beta^{i_1}\alpha^{j_1}\cdot\varepsilon^{n^0}_A[i_0+k)\beta^{i_0+k}\alpha^{j_0+k}\cdot\beta^{i_2}\alpha^{j_2}=\\
   &=\beta^{i_1}\cdot\varepsilon^{n^0}_A[i_0-j_1+k)\alpha^{j_1}\beta^{i_0+k}\alpha^{j_0+k}\beta^{i_2}\alpha^{j_2}=\\
   &=\varepsilon^{n^0}_A[i_0-j_1+i_1+k)\beta^{i_1}\alpha^{j_1}\beta^{i_0+k}\alpha^{j_0+k}\beta^{i_2}\alpha^{j_2}=\\
   &=\varepsilon^{n^0}_A[i_0-j_1+i_1+k)\beta^{i_0-j_1+i_1+k}\alpha^{j_0-i_2+j_2+k}.
\end{align*}
Lemma~\ref{lemma-2.14}\eqref{lemma-2.14-3} implies that $\beta^{i_1}\alpha^{j_1}\cdot \eta\cdot\beta^{i_2}\alpha^{j_2}\ll \beta^{i_1}\alpha^{j_1}\cdot \gamma_0\cdot\beta^{i_2}\alpha^{j_2}$, and hence
\begin{equation*}
\beta^{i_1}\alpha^{j_1}\cdot\eta\cdot\beta^{i_2}\alpha^{j_2}\in {\downarrow_{\ll}}(\beta^{i_1}\alpha^{j_1}\cdot\gamma_0\cdot\beta^{i_2}\alpha^{j_2}) \cap\left\langle A[n^0)\right\rangle
\end{equation*}
for all $\eta\in{\downarrow_{\ll}}\gamma_0$.
\end{proof}

\section{On locally compact submonoids of $\mathbf{I}\mathbb{N}_{\infty}$ with adjoined zero}\label{sec-3}

In this section we assume that $S$ is a submonoid of the semigroup $\mathbf{I}\mathbb{N}_{\infty}$  which contains $\mathscr{C}_\mathbb{N}$. By $S^{\boldsymbol{0}}$ we denote $S$ with the adjoined zero $\boldsymbol{0}$.

\begin{definition}[\!\cite{Chuchman-Gutik-2010}]\label{definitio-3.1}
We shall say that a semigroup $S$ has:
\begin{itemize}
    \item an $\textsf{F}$-\emph{property} if for every $a,b,c,d\in S^1$ the sets $\{x\in S\mid a\cdot x=b\}$ and $\{x\in S\mid x\cdot
     c=d\}$ are finite;

    \item an $\textsf{FS}$-\emph{property} if $S$ is simple and has $\textsf{F}$-property.
\end{itemize}
\end{definition}

\begin{proposition}\label{proposition-3.2}
If $S^{\boldsymbol{0}}$ is a Hausdorff Baire semitopological semigroup, then $S$ is a discrete subspace of $S^{\boldsymbol{0}}$.
\end{proposition}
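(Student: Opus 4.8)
The plan is to combine a Baire-category argument with the algebraic richness of $S$ (simplicity together with an $\mathsf{F}$-type finiteness condition). First I would observe that $S^{\boldsymbol{0}}$ is countable, since $\mathbf{I}\mathbb{N}_{\infty}$ is countable and $S^{\boldsymbol{0}}\subseteq\mathbf{I}\mathbb{N}_{\infty}\cup\{\boldsymbol{0}\}$. Because $S^{\boldsymbol{0}}$ is Hausdorff, the singleton $\{\boldsymbol{0}\}$ is closed, so $S=S^{\boldsymbol{0}}\setminus\{\boldsymbol{0}\}$ is open in $S^{\boldsymbol{0}}$; an open subspace of a Baire space is Baire, hence $S$ is a countable Hausdorff Baire space. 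If $S$ had no isolated point, then every singleton of $S$ would be closed with empty interior, i.e.\ nowhere dense, and writing $S$ as the countable union of its singletons would, by the Baire property, force the empty set $S\setminus S$ to be dense in $S$ --- impossible since $S\supseteq\mathscr{C}_\mathbb{N}\neq\varnothing$. Therefore $S$ possesses an isolated point $\gamma$, and since $S$ is open in $S^{\boldsymbol{0}}$, the point $\gamma$ is isolated in $S^{\boldsymbol{0}}$ as well.

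Next I would propagate isolation to every point of $S$ using separate continuity and simplicity. Fix an arbitrary $\delta\in S$. By Lemma~\ref{lemma-2.1} the monoid $S$ is simple, so $S\delta S=S$ and in particular $\gamma=a\delta b$ for some $a,b\in S$. In a semitopological semigroup the map $\varphi\colon x\mapsto axb$ is continuous, being the composition of the separately continuous right translation $x\mapsto xb$ with the left translation $y\mapsto ay$. Since $\{\gamma\}$ is open, the preimage $\varphi^{-1}(\{\gamma\})=\{x\in S\colon axb=\gamma\}$ is an open neighbourhood of $\delta$.

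The remaining point is to show this neighbourhood is finite, and here I would invoke the $\mathsf{F}$-property of $S$. First one checks that $\mathbf{I}\mathbb{N}_{\infty}$, and hence its subsemigroup $S$, has the $\mathsf{F}$-property: since by Lemma~1 of \cite{Gutik-Savchuk-2018} every element is a shift of a cofinite subset of $\mathbb{N}$, the equation $ax=b$ forces the shift constant of $x$ to equal $c_b-c_a$ and pins down $\operatorname{dom}x$ on the cofinite set $\operatorname{ran}a$, leaving only finitely many admissible completions, so $\{x\colon ax=b\}$ is finite, and dually for $\{x\colon xc=d\}$. From the one-sided statements the two-sided finiteness follows: putting $T=\{y\colon yb=\gamma\}$, which is finite, gives $\{x\colon axb=\gamma\}=\bigcup_{y\in T}\{x\colon ax=y\}$, a finite union of finite sets. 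Thus $\delta$ has a finite open neighbourhood in the Hausdorff (in particular $T_1$) space $S$; deleting from this neighbourhood its finitely many other, necessarily closed, points shows that $\{\delta\}$ is open, so $\delta$ is isolated. As $\delta\in S$ was arbitrary, every point of $S$ is isolated in $S$, and since $S$ is open in $S^{\boldsymbol{0}}$, the subspace $S$ is discrete in $S^{\boldsymbol{0}}$.

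I expect the main obstacle to be the finiteness step: establishing (or citing) the $\mathsf{F}$-property for $S$ and correctly reducing the two-sided equation $axb=\gamma$ to the one-sided statements of Definition~\ref{definitio-3.1}. The Baire part and the continuity of $x\mapsto axb$ are routine, whereas the counting argument for $\{x\colon ax=b\}$ relies essentially on the shift description of the elements of $\mathbf{I}\mathbb{N}_{\infty}$.
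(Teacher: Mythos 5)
Your proof is correct, and it is built from the same three ingredients as the paper's: openness of $S$ in the Hausdorff space $S^{\boldsymbol{0}}$ (hence $S$ is Baire), simplicity of $S$, and the $\textsf{F}$-property. The difference is that the paper's proof is essentially a chain of citations: it quotes Proposition~2.2 of \cite{Gutik-Repovs-2011} for the $\textsf{F}$-property of $\mathscr{I}_{\infty}^{\!\nearrow}(\mathbb{N})$ (hence of $\mathbf{I}\mathbb{N}_{\infty}$ and of $S$), Theorem~5 of \cite{Gutik-Savchuk-2019} for simplicity, and then finishes by invoking Theorem~5 of \cite{Chuchman-Gutik-2010}, which asserts that a Hausdorff Baire shift-continuous topology on a semigroup with the $\textsf{FS}$-property is discrete. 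You instead reconstruct the content of that last theorem from scratch: countability of $S$ together with the Baire property produces one isolated point $\gamma$; simplicity of the monoid $S$ gives $\gamma=a\delta b$ for any $\delta\in S$, and continuity of the two-sided translation $x\mapsto axb$ (a composition of separately continuous translations) pulls the open set $\{\gamma\}$ back to an open neighbourhood of $\delta$; the $\textsf{F}$-property --- which you also verify directly from the shift description of elements of $\mathbf{I}\mathbb{N}_{\infty}$, with a correct reduction of the two-sided equation $axb=\gamma$ to the one-sided equations of Definition~\ref{definitio-3.1} --- makes that neighbourhood finite, and the $T_1$ axiom then isolates $\delta$. What your route buys is a self-contained argument that also makes explicit the role of countability of $S$, a hypothesis invisible in the paper's proof because it is absorbed into the cited theorem; what the paper's route buys is brevity, deferring both the category argument and the $\textsf{F}$-property to the literature. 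Both arguments are valid.
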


\begin{proof}
Since $S^{\boldsymbol{0}}$ is a Hausdorff space, $S$ is an open subspace of $S^{\boldsymbol{0}}$ and hence the space $S$ is Baire.

By Proposition 2.2 of \cite{Gutik-Repovs-2011} the monoid $\mathscr{I}_{\infty}^{\!\nearrow}(\mathbb{N})$ has $\textsf{F}$-property. This implies that $\mathbf{I}\mathbb{N}_{\infty}$ has  $\textsf{F}$-property, and hence $S$ has  $\textsf{F}$-property, too. By Theorem~5 of \cite{Gutik-Savchuk-2019} the monoid $S$ is simple. Then by Theorem~5 of \cite{Chuchman-Gutik-2010} every shift-continuous topology on $S$ is discrete.
\end{proof}

\begin{corollary}\label{corollary-3.3}
If $S^{\boldsymbol{0}}$ is a Hausdorff locally compact semitopological semigroup, then $S$ is a discrete subspace of $S^{\boldsymbol{0}}$.
\end{corollary}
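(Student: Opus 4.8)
The plan is to deduce this immediately from Proposition~\ref{proposition-3.2} by observing that local compactness forces the Baire property. First I would recall the classical Baire Category Theorem in the form that \emph{every locally compact Hausdorff topological space is Baire} (see, e.g., \cite{Engelking-1989}). Since $S^{\boldsymbol{0}}$ is assumed to be both Hausdorff and locally compact, this theorem applies directly and yields that $S^{\boldsymbol{0}}$ is a Baire space.

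With this observation in hand, $S^{\boldsymbol{0}}$ satisfies all the hypotheses of Proposition~\ref{proposition-3.2}: it is a Hausdorff Baire semitopological semigroup. Therefore Proposition~\ref{proposition-3.2} applies verbatim and gives that $S$ is a discrete subspace of $S^{\boldsymbol{0}}$, which is exactly the assertion. In particular, note that the passage from the Baire property of $S^{\boldsymbol{0}}$ to that of the open subspace $S$ (valid because $\{\boldsymbol{0}\}$ is closed in the Hausdorff space $S^{\boldsymbol{0}}$, so $S$ is open) is already carried out inside the proof of Proposition~\ref{proposition-3.2}, and hence need not be repeated here.

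There is essentially no obstacle: the entire combinatorial and algebraic content—namely the $\textsf{F}$-property of $\mathbf{I}\mathbb{N}_{\infty}$, the simplicity of $S$, and the resulting discreteness of any shift-continuous topology—is packaged in Proposition~\ref{proposition-3.2} and the results it invokes from \cite{Gutik-Repovs-2011, Gutik-Savchuk-2019, Chuchman-Gutik-2010}. The only additional ingredient required for this corollary is the standard implication ``locally compact Hausdorff $\Rightarrow$ Baire,'' so the proof reduces to a one-line verification that the weaker-looking hypothesis of Proposition~\ref{proposition-3.2} is met.
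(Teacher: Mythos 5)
Your proposal is correct and coincides with the paper's intended argument: the paper states this result as an immediate consequence of Proposition~\ref{proposition-3.2}, the bridging fact being exactly the one you supply, namely that every Hausdorff locally compact space is Baire. Nothing further is needed.
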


Corollary~\ref{corollary-3.3} implies that every open neighbourhood  $U(\boldsymbol{0})$ of  $\boldsymbol{0}$ in Hausdorff locally compact semitopological semigroup $S^{\boldsymbol{0}}$ is a closed subset, i.e., the closure of an open neighbourhood  $U(\boldsymbol{0})$ of $\boldsymbol{0}$ coincides with $U(\boldsymbol{0})$.
These arguments and Corollary~\ref{corollary-3.3} imply the following lemma.

\begin{lemma}\label{lemma-3.4}
Let $\tau$ be a non-discrete Hausdorff locally compact shift-continuous topology on the semigroup $S^{\boldsymbol{0}}$. Then for any compact-and-open neighbourhoods $U(\boldsymbol{0})$ and $V(\boldsymbol{0})$ of $\boldsymbol{0}$ in $(S^{\boldsymbol{0}},\tau)$ both sets $U(\boldsymbol{0})\setminus V(\boldsymbol{0})$ and $V(\boldsymbol{0})\setminus U(\boldsymbol{0})$ are finite.
\end{lemma}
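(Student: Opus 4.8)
The plan is to reduce the statement to elementary topology, drawing essentially all of the semigroup content from Corollary~\ref{corollary-3.3}, which guarantees that $S$ is a discrete subspace of $S^{\boldsymbol{0}}$. Since the only candidate for a non-isolated point of $S^{\boldsymbol{0}}$ is $\boldsymbol{0}$, and $\tau$ is assumed non-discrete, every point of $S$ is isolated in $S^{\boldsymbol{0}}$ (here I would use that $S$ is open in $S^{\boldsymbol{0}}$, so that a one-point neighbourhood of $s\in S$ taken in the subspace $S$ is automatically open in the whole space). Consequently any subset of $S$ is a discrete subspace of $S^{\boldsymbol{0}}$.

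First I would observe that because $\boldsymbol{0}\in V(\boldsymbol{0})$, the set $U(\boldsymbol{0})\setminus V(\boldsymbol{0})$ does not contain $\boldsymbol{0}$ and hence $U(\boldsymbol{0})\setminus V(\boldsymbol{0})\subseteq S$; symmetrically $V(\boldsymbol{0})\setminus U(\boldsymbol{0})\subseteq S$. Next I would show that $U(\boldsymbol{0})\setminus V(\boldsymbol{0})$ is compact: writing $U(\boldsymbol{0})\setminus V(\boldsymbol{0})=U(\boldsymbol{0})\cap\bigl(S^{\boldsymbol{0}}\setminus V(\boldsymbol{0})\bigr)$ and using that $V(\boldsymbol{0})$ is open, its complement is closed, so $U(\boldsymbol{0})\setminus V(\boldsymbol{0})$ is a closed subset of the compact set $U(\boldsymbol{0})$ and therefore compact. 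Combining the two observations, $U(\boldsymbol{0})\setminus V(\boldsymbol{0})$ is a compact subset of the discrete subspace $S$, hence a compact discrete space, and thus finite.

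Interchanging the roles of $U(\boldsymbol{0})$ and $V(\boldsymbol{0})$ — now invoking the compactness of $V(\boldsymbol{0})$ and the openness of $U(\boldsymbol{0})$ — yields by the identical argument that $V(\boldsymbol{0})\setminus U(\boldsymbol{0})$ is finite as well, which is exactly where the hypothesis that \emph{both} neighbourhoods are compact-and-open is used. I do not anticipate any genuine obstacle: the semigroup-theoretic input is confined to Corollary~\ref{corollary-3.3}, and the only points demanding a line of justification are that a closed subset of a compact set is compact and that a compact discrete space is finite, both of which are standard.
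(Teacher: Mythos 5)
Your proposal is correct and follows essentially the same route as the paper, which derives Lemma~\ref{lemma-3.4} directly from Corollary~\ref{corollary-3.3}: since every point of $S$ is isolated in $(S^{\boldsymbol{0}},\tau)$ and $U(\boldsymbol{0})\setminus V(\boldsymbol{0})$, $V(\boldsymbol{0})\setminus U(\boldsymbol{0})$ are closed subsets of compact sets lying in $S$, they are compact discrete spaces and hence finite. Your write-up merely makes explicit the steps the paper leaves implicit (openness of $S$ in the Hausdorff space $S^{\boldsymbol{0}}$, and compactness of a closed subset of a compact set), so there is no substantive difference.
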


Later in all statements by $U(\boldsymbol{0})$ we denote any compact-and-open neighbourhood of zero  in $(S^{\boldsymbol{0}},\tau)$.

\begin{lemma}\label{lemma-3.5}
Let $\tau$ be a non-discrete Hausdorff locally compact shift-continuous topology on the semigroup $S^{\boldsymbol{0}}$. Then there exists $\gamma^*\in S$ such that ${\downarrow_{\ll}}\gamma_0\cap U(\boldsymbol{0})$ is infinite.
\end{lemma}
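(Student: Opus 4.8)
The plan is to exhibit an element $\gamma^{*}$ whose whole $\ll$-chain ${\downarrow_{\ll}}\gamma^{*}$ eventually sinks into the fixed compact-and-open neighbourhood $U(\boldsymbol{0})$ of $\boldsymbol{0}$, by pushing points of $U(\boldsymbol{0})$ downwards with a suitable continuous self-map and showing that this process cannot always escape $U(\boldsymbol{0})$. First observe that, since $\tau$ is non-discrete while $S$ is discrete by Corollary~\ref{corollary-3.3}, the point $\boldsymbol{0}$ is not isolated, so $U(\boldsymbol{0})\cap S$ is infinite. Consider the map $\mathfrak{f}\colon S^{\boldsymbol{0}}\to S^{\boldsymbol{0}}$, $\mathfrak{f}(x)=\beta x\alpha$. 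As a composition of a left and a right translation it is continuous in the shift-continuous topology $\tau$, it fixes $\boldsymbol{0}$, it maps $S$ into $S$, and for every $\gamma\in S$ we have $\mathfrak{f}(\gamma)=\beta^{1}\gamma\alpha^{1}\ll\gamma$, so $\mathfrak{f}$ carries ${\downarrow_{\ll}}\gamma$ into itself (Lemma~\ref{lemma-2.9}\eqref{lemma-2.9-1}). Using continuity of $\mathfrak{f}$ at $\boldsymbol{0}$ together with local compactness, I would choose a compact-and-open neighbourhood $V(\boldsymbol{0})\subseteq U(\boldsymbol{0})$ with $\beta\,V(\boldsymbol{0})\,\alpha=\mathfrak{f}(V(\boldsymbol{0}))\subseteq U(\boldsymbol{0})$; by Lemma~\ref{lemma-3.4} the set $U(\boldsymbol{0})\setminus V(\boldsymbol{0})$ is finite.

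Set $W=U(\boldsymbol{0})\cap S$ and $B_{0}=\{\delta\in W\colon \beta\delta\alpha\notin U(\boldsymbol{0})\}$. Since $\delta\in V(\boldsymbol{0})$ forces $\beta\delta\alpha\in U(\boldsymbol{0})$, we have $B_{0}\subseteq U(\boldsymbol{0})\setminus V(\boldsymbol{0})$, hence $B_{0}$ is finite. The goal is then to find $\gamma^{*}\in W$ whose forward orbit $\gamma^{*},\mathfrak{f}(\gamma^{*}),\mathfrak{f}^{2}(\gamma^{*}),\dots$ never leaves $W$; since $\mathfrak{f}^{n}(\gamma^{*})=\beta^{n}\gamma^{*}\alpha^{n}$ runs over the distinct elements of the $\omega$-chain ${\downarrow_{\ll}}\gamma^{*}$ (Lemma~\ref{lemma-2.8}), this would place infinitely many points of ${\downarrow_{\ll}}\gamma^{*}$ inside $U(\boldsymbol{0})$, as required.

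I would argue by contradiction: assume that for every $\gamma\in W$ the orbit meets the complement of $W$, and let $\ell(\gamma)$ be the largest $n$ with $\gamma,\dots,\mathfrak{f}^{n}(\gamma)\in W$ (finite by assumption). Then $\mathfrak{f}^{\ell(\gamma)}(\gamma)\in W$ while $\beta\big(\mathfrak{f}^{\ell(\gamma)}(\gamma)\big)\alpha=\mathfrak{f}^{\ell(\gamma)+1}(\gamma)\in S\setminus U(\boldsymbol{0})$, so $\Phi(\gamma):=\mathfrak{f}^{\ell(\gamma)}(\gamma)\in B_{0}$. Because $\mathfrak{f}$ is $\ll$-decreasing we have $\Phi(\gamma)\ll\gamma$, that is $\gamma\in{\uparrow_{\ll}}\Phi(\gamma)$, whence $W\subseteq\bigcup_{b\in B_{0}}{\uparrow_{\ll}}b$, where ${\uparrow_{\ll}}b=\{\delta\in S\colon b\ll\delta\}$.

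The main point of the proof is then the finiteness of each up-set ${\uparrow_{\ll}}b$, and this is the step I expect to require the most care. For it I would invoke Corollary~\ref{corollary-2.15}: any $\delta$ with $b\ll\delta$ lies, together with $b$, either in a common set $\left\langle A[n^0)\right\rangle$ or in $\mathscr{C}_\mathbb{N}$. In the first case Lemma~\ref{lemma-2.14}\eqref{lemma-2.14-3}, applied to the canonical representations $b=\varepsilon^{n^0}_A[i_b)\beta^{i_b}\alpha^{j_b}$ and $\delta=\varepsilon^{n^0}_A[i_\delta)\beta^{i_\delta}\alpha^{j_\delta}$, forces $i_b-i_\delta=j_b-j_\delta=k\geqslant 0$ with $i_\delta,j_\delta\geqslant 0$, so $0\leqslant k\leqslant\min\{i_b,j_b\}$ and $\delta$ is uniquely determined by $k$; in the bicyclic case Proposition~\ref{proposition-2.7} reduces the count to the natural partial order on $\mathscr{C}_\mathbb{N}$, which is finite above each element for the same reason. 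Hence $\left|{\uparrow_{\ll}}b\right|\leqslant\min\{i_b,j_b\}+1<\infty$. Consequently $W\subseteq\bigcup_{b\in B_{0}}{\uparrow_{\ll}}b$ is a finite union of finite sets, hence finite, contradicting the infinitude of $W=U(\boldsymbol{0})\cap S$. Therefore some $\gamma^{*}\in W$ has all iterates $\mathfrak{f}^{n}(\gamma^{*})=\beta^{n}\gamma^{*}\alpha^{n}$ in $U(\boldsymbol{0})$, and so ${\downarrow_{\ll}}\gamma^{*}\cap U(\boldsymbol{0})$ is infinite. The remaining care points are the clean extraction of $V(\boldsymbol{0})$ (continuity of $\mathfrak{f}$ together with the existence of a compact-and-open base at $\boldsymbol{0}$) and the uniform treatment of the two cases of Corollary~\ref{corollary-2.15}.
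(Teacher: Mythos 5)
Your proof is correct, and its engine is the same as the paper's: argue by contradiction, push elements down along their $\ll$-chains with the map $\gamma\mapsto\beta\gamma\alpha$ (which is continuous by shift-continuity), choose a compact-and-open $V(\boldsymbol{0})\subseteq U(\boldsymbol{0})$ with $\beta\cdot V(\boldsymbol{0})\cdot\alpha\subseteq U(\boldsymbol{0})$, and exploit the finiteness of $U(\boldsymbol{0})\setminus V(\boldsymbol{0})$ given by Lemma~\ref{lemma-3.4}. Where you genuinely differ is in how the final count is organized. The paper assumes every set ${\downarrow_{\ll}}\gamma\cap U(\boldsymbol{0})$ is finite, takes the minimum of each such set (minimality forces these minima out of $V(\boldsymbol{0})$, by Lemma~\ref{lemma-2.9}\eqref{lemma-2.9-1}), and concludes that $U(\boldsymbol{0})\setminus V(\boldsymbol{0})$ is infinite, contradicting Lemma~\ref{lemma-3.4}; the step asserting that infinitely many of these minima are pairwise distinct is left implicit there. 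You invert the count: your escape points $B_0$ are trapped in the finite set $U(\boldsymbol{0})\setminus V(\boldsymbol{0})$, and the inclusion $W\subseteq\bigcup_{b\in B_0}{\uparrow_{\ll}}b$ then forces $W=U(\boldsymbol{0})\cap S$ to be finite because each up-set ${\uparrow_{\ll}}b$ is finite, a fact you justify explicitly via Corollary~\ref{corollary-2.15}, Lemma~\ref{lemma-2.14}\eqref{lemma-2.14-3} and Proposition~\ref{proposition-2.7}. This supplies in detail exactly the structural input that the paper's terse phrase ``the above arguments \dots{} imply'' is hiding (distinctness of the minima needs the same kind of argument), so your write-up is the more complete of the two. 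Moreover, your conclusion is stronger than the lemma requires: you produce $\gamma^*$ with the whole chain ${\downarrow_{\ll}}\gamma^*$ (equal to the $\mathfrak{f}$-orbit, by Lemma~\ref{lemma-2.8}) contained in $U(\boldsymbol{0})$, which is precisely the statement of Proposition~\ref{proposition-3.5}; thus your single argument subsumes both that lemma and the proposition the paper derives from it afterwards.
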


\begin{proof}
Suppose to the contrary that the set ${\downarrow_{\ll}}\gamma\cap U(\boldsymbol{0})$ is finite for any $\gamma\in S$ and any neighbourhood $U(\boldsymbol{0})$ of $\boldsymbol{0}$. This implies that the set ${\downarrow_{\ll}}\gamma\cap U(\boldsymbol{0})$ is non-empty for some infinitely many $\gamma\in S$. Hence for such elements $\gamma$ the set ${\downarrow_{\ll}}\gamma\cap U(\boldsymbol{0})$ contains the minimum and the maximum elements with the respect to the order $\ll$ on $S$. By separate continuity of the semigroup operation in $S^{\boldsymbol{0}}$ there exists a compact-and-open neighbourhood $V(\boldsymbol{0})\subseteq U(\boldsymbol{0})$ of  $\boldsymbol{0}$ in $(S^{\boldsymbol{0}},\tau)$ such that $\beta\cdot V(\boldsymbol{0})\cdot\alpha\subseteq U(\boldsymbol{0})$. Then the above arguments and Lemma~\ref{lemma-2.9}\eqref{lemma-2.9-1} imply that the set $U(\boldsymbol{0})\setminus V(\boldsymbol{0})$ is infinite, which contradicts Lemma~\ref{lemma-3.4}. Hence there exists $\gamma_0\in S$ such that the set ${\downarrow_{\ll}}\gamma_0\cap U(\boldsymbol{0})$ is infinite.
\end{proof}

\begin{proposition}\label{proposition-3.5}
Let $\tau$ be a non-discrete Hausdorff locally compact shift-continuous topology on the semigroup $S^{\boldsymbol{0}}$. Then there exists $\gamma^*\in S$ such that ${\downarrow_{\ll}}\gamma^*\subset U(\boldsymbol{0})$.
\end{proposition}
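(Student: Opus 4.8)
The plan is to start from the element provided by Lemma~\ref{lemma-3.5} and show that, for the compact-and-open neighbourhood $U(\boldsymbol{0})$, its whole $\omega$-chain lands in $U(\boldsymbol{0})$ from some point on. So fix $\gamma_0\in S$ with ${\downarrow_{\ll}}\gamma_0\cap U(\boldsymbol{0})$ infinite, and write $c_j=\beta^j\gamma_0\alpha^j$ for $j\in\omega$; by Lemma~\ref{lemma-2.8} these are pairwise distinct and ${\downarrow_{\ll}}\gamma_0=\{c_j\colon j\in\omega\}$. If I can produce $N\in\omega$ with $c_j\in U(\boldsymbol{0})$ for all $j\geqslant N$, then $\gamma^*:=c_N=\beta^N\gamma_0\alpha^N\in S$ satisfies ${\downarrow_{\ll}}\gamma^*=\{c_j\colon j\geqslant N\}\subseteq U(\boldsymbol{0})$, which is exactly the assertion.

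The engine of the argument is the conjugation map $\varphi\colon S^{\boldsymbol{0}}\to S^{\boldsymbol{0}}$, $\varphi(x)=\beta x\alpha$. By separate continuity the left translation $x\mapsto\beta x$ and the right translation $x\mapsto x\alpha$ are continuous, hence so is their composite $\varphi$; moreover $\varphi(\boldsymbol{0})=\boldsymbol{0}$ and $\varphi(c_j)=\beta^{j+1}\gamma_0\alpha^{j+1}=c_{j+1}$, so $\varphi$ shifts the chain one step down. Now set $V(\boldsymbol{0})=U(\boldsymbol{0})\cap\varphi^{-1}(U(\boldsymbol{0}))$. Since $U(\boldsymbol{0})$ is compact-and-open and $\varphi$ is continuous with $\varphi(\boldsymbol{0})=\boldsymbol{0}$, the set $V(\boldsymbol{0})$ is an open neighbourhood of $\boldsymbol{0}$ contained in $U(\boldsymbol{0})$; because $S$ is discrete (Corollary~\ref{corollary-3.3}), the difference $U(\boldsymbol{0})\setminus V(\boldsymbol{0})$ is a closed, hence compact, discrete subset of $S$ and is therefore finite (alternatively invoke Lemma~\ref{lemma-3.4}). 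Consequently $V(\boldsymbol{0})$ is again compact-and-open, $\{c_j\colon j\in\omega\}\cap V(\boldsymbol{0})$ is infinite, and by construction $c_j\in V(\boldsymbol{0})$ implies $c_{j+1}=\varphi(c_j)\in U(\boldsymbol{0})$.

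The decisive step is to rule out the chain oscillating in and out of $U(\boldsymbol{0})$ infinitely often, and this is the part I expect to be the main obstacle. Partition $\omega$ into $A=\{j\colon c_j\in V(\boldsymbol{0})\}$, $B=\{j\colon c_j\in U(\boldsymbol{0})\setminus V(\boldsymbol{0})\}$ and $K=\{j\colon c_j\notin U(\boldsymbol{0})\}$; here $A$ is infinite and $B$ is finite, since the $c_j$ are distinct and $U(\boldsymbol{0})\setminus V(\boldsymbol{0})$ is finite. The implication above reads $j\in A\Rightarrow j+1\notin K$, whose contrapositive gives that for every $m\in K$ with $m\geqslant 1$ one has $m-1\in B\cup K$; thus $K$ is downward closed above $\max B$. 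Were $K$ infinite, choosing $a\in A$ with $a>\max B$ and then $k\in K$ with $k>a$ would force $a\in\{\max B+1,\dots,k\}\subseteq K$, contradicting $a\in A$. Hence $K$ is finite, so $c_j\in U(\boldsymbol{0})$ for all sufficiently large $j$, and taking $N$ past $\max K$ yields the required $\gamma^*=c_N$.
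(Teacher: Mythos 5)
Your proof is correct and follows essentially the same route as the paper: both arguments rest on the continuity (by separate continuity of the operation) of the conjugation $x\mapsto\beta x\alpha$, which fixes $\boldsymbol{0}$ and shifts the $\omega$-chain ${\downarrow_{\ll}}\gamma_0$ one step down, combined with the finiteness of the difference of neighbourhoods of $\boldsymbol{0}$, to conclude that the chain can exit $U(\boldsymbol{0})$ only finitely often and then take $\gamma^*$ far enough down the chain. The differences are only presentational: the paper argues by contradiction, citing Lemma~\ref{lemma-3.4} and Lemma~\ref{lemma-2.9}\eqref{lemma-2.9-1}, whereas you argue directly via $V(\boldsymbol{0})=U(\boldsymbol{0})\cap\varphi^{-1}(U(\boldsymbol{0}))$ and make explicit the combinatorial bookkeeping (the partition into $A$, $B$, $K$) that the paper leaves implicit.
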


\begin{proof}
By Lemma~\ref{lemma-3.5} there exists  $\gamma_0\in S$ such that the set ${\downarrow_{\ll}}\gamma_0\cap U(\boldsymbol{0})$ is infinite. We claim that the set ${\downarrow_{\ll}}\gamma_0\setminus U(\boldsymbol{0})$ is finite. Suppose to the contrary that there exists a compact-and-open neighbourhood $V(\boldsymbol{0})$ of $\boldsymbol{0}$ in $(S^{\boldsymbol{0}},\tau)$ such that the set ${\downarrow_{\ll}}\gamma_0\setminus  V(\boldsymbol{0})$ is infinite. By the separate continuity of the semigroup operation in $S^{\boldsymbol{0}}$ there exists a compact-and-open neighbourhood $W(\boldsymbol{0})\subseteq V(\boldsymbol{0})$ of  $\boldsymbol{0}$ in $(S^{\boldsymbol{0}},\tau)$ such that $\beta\cdot W(\boldsymbol{0})\cdot\alpha\subseteq V(\boldsymbol{0})$. Then infiniteness of ${\downarrow_{\ll}}\gamma_0\setminus V(\boldsymbol{0})$ and Lemma~\ref{lemma-2.9}\eqref{lemma-2.9-1} imply that the set $V(\boldsymbol{0})\setminus W(\boldsymbol{0})$ is infinite, which contradicts Lemma~\ref{lemma-3.4}. Hence the set ${\downarrow_{\ll}}\gamma_0\setminus V(\boldsymbol{0})$ is finite, and by Lemma~\ref{lemma-3.4} the set ${\downarrow_{\ll}}\gamma_0\setminus U(\boldsymbol{0})$ is finite, as well. By Lemma~\ref{lemma-2.9} there exists $\gamma^*\in {\downarrow_{\ll}}\gamma_0$ such that ${\downarrow_{\ll}}\gamma^*\subset  U(\boldsymbol{0})$.
\end{proof}

\begin{proposition}\label{proposition-3.6}
Let $\tau$ be a non-discrete Hausdorff locally compact shift-continuous topology on the semigroup $S^{\boldsymbol{0}}$. Then there exists  a subset of the form $\left\langle A[n^0)\right\rangle$ in $S$ such that the set  $\left\langle A[n^0)\right\rangle\setminus U(\boldsymbol{0})$ is finite.
\end{proposition}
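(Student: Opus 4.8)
The plan is to fix the subset $\left\langle A[n^0)\right\rangle$ supplied by Proposition~\ref{proposition-3.5} and to show that its complement in $U(\boldsymbol 0)$ is finite by a two-dimensional propagation argument. First I would invoke Proposition~\ref{proposition-3.5} to get $\gamma^*\in S$ with ${\downarrow_{\ll}}\gamma^*\subset U(\boldsymbol 0)$. If $\gamma^*\in\mathscr{C}_\mathbb{N}$ I put $\left\langle A[n^0)\right\rangle=\mathscr{C}_\mathbb{N}$; otherwise Corollary~\ref{corollary-2.3} attaches to $\gamma^*$ a unique pair $A,n^0$ with $\gamma^*\in\left\langle A[n^0)\right\rangle$, Proposition~\ref{proposition-2.11} gives $\left\langle A[n^0)\right\rangle\subseteq S$, and Proposition~\ref{proposition-2.13} gives ${\downarrow_{\ll}}\gamma^*\subseteq\left\langle A[n^0)\right\rangle$. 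Thus $\left\langle A[n^0)\right\rangle$ is the required subset, and the infinite chain ${\downarrow_{\ll}}\gamma^*$ is an infinite subset of $\left\langle A[n^0)\right\rangle\cap U(\boldsymbol 0)$. Identifying $\gamma_{i,j}=\varepsilon^{n^0}_A[i)\beta^i\alpha^j$ with the lattice point $(i,j)$ (and $\beta^i\alpha^j$ with $(i,j)$ in the case $\left\langle A[n^0)\right\rangle=\mathscr{C}_\mathbb{N}$), the set $\left\langle A[n^0)\right\rangle$ becomes the quarter-plane grid $\omega\times\omega$.

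The key step is to convert separate continuity into a local propagation rule on this grid. Using Lemma~\ref{lemma-2.12} (equivalently the relation $\alpha\beta=\mathbb{I}$), the four elementary translations by the generators realise the four unit shifts: right multiplication by $\alpha$ sends $(i,j)\mapsto(i,j+1)$, right multiplication by $\beta$ sends $(i,j)\mapsto(i,j-1)$ for $j\geqslant1$, left multiplication by $\alpha$ sends $(i,j)\mapsto(i-1,j)$ for $i\geqslant1$, and left multiplication by $\beta$ sends $(i,j)\mapsto(i+1,j)$; in every case the image again lies in $\left\langle A[n^0)\right\rangle$. Each of these translations is continuous and fixes $\boldsymbol 0$, so for each there is a compact-and-open neighbourhood $V_r(\boldsymbol 0)\subseteq U(\boldsymbol 0)$ whose image under the corresponding translation lies in $U(\boldsymbol 0)$, and by Lemma~\ref{lemma-3.4} each difference $U(\boldsymbol 0)\setminus V_r(\boldsymbol 0)$ is finite. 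Setting $F=\bigcup_{r=1}^{4}\big(U(\boldsymbol 0)\setminus V_r(\boldsymbol 0)\big)$, a finite set, I obtain the \emph{propagation rule}: if a grid point $(i,j)$ lies in $U(\boldsymbol 0)$ and $(i,j)\notin F$, then all of its in-grid unit neighbours lie in $U(\boldsymbol 0)$.

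Finally I would run the propagation. Since ${\downarrow_{\ll}}\gamma^*\subset U(\boldsymbol 0)$ is infinite and $F$ is finite, I choose a seed $(i_0,j_0)\in{\downarrow_{\ll}}\gamma^*\setminus F$. Because the quarter-plane $\omega\times\omega$ with finitely many points deleted is still connected under unit moves, every point of $(\omega\times\omega)\setminus F$ is joined to $(i_0,j_0)$ by a path avoiding $F$; walking along such a path and applying the propagation rule at each interior vertex (each of which is thereby shown to lie in $U(\boldsymbol 0)$ and, being off $F$, is eligible to propagate further) places the endpoint in $U(\boldsymbol 0)$. Hence $(\omega\times\omega)\setminus F\subseteq U(\boldsymbol 0)$, that is, $\left\langle A[n^0)\right\rangle\setminus U(\boldsymbol 0)\subseteq F$ is finite.

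The step I expect to be the crux is precisely this upgrade from one-dimensional to two-dimensional information. By Lemma~\ref{lemma-2.8} every $\ll$-downset is merely an $\omega$-chain, and the maximal (``boundary'') elements of $\left\langle A[n^0)\right\rangle$ are never reached as deep elements of any downset; consequently a single continuous translation can only push the in-$U(\boldsymbol 0)$ chain ${\downarrow_{\ll}}\gamma^*$ onto deep parts of other chains, so no finite amount of such transport can cover the boundary or fill the plane. The device that overcomes this is the simultaneous use of all four generator translations, which furnishes unit moves in every direction and lets planar connectivity of $\omega\times\omega\setminus F$ perform the filling; the only routine verifications are the four shift formulas above and the invariance of $\left\langle A[n^0)\right\rangle$ under them.
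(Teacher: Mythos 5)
Your strategy is essentially sound and genuinely different from the paper's, but one step, as you state it, is false: the claim that ``the quarter-plane $\omega\times\omega$ with finitely many points deleted is still connected under unit moves.'' It is not. Deleting $F=\{(1,0),(0,1)\}$ isolates the corner $(0,0)$, and deleting $\{(a-1,0),(a,1),(a+1,0)\}$ isolates the boundary point $(a,0)$; so a point of $(\omega\times\omega)\setminus F$ need not be joined to your seed by an $F$-avoiding path, and the inclusion $(\omega\times\omega)\setminus F\subseteq U(\boldsymbol{0})$ does not follow. Fortunately the repair is immediate, because the proposition asserts only finiteness. Let $M$ bound the coordinates of the finitely many grid points lying in $F$. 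Every grid point with a coordinate exceeding $M$ lies in a single component $C_\infty$ of the grid minus $F$: if $i>M$ its whole column is $F$-free, if $j>M$ its whole row is $F$-free, and all such rows and columns meet one another. Since ${\downarrow_{\ll}}\gamma^*$ is an infinite diagonal inside $U(\boldsymbol{0})$, you may choose the seed deep enough that both its coordinates exceed $M$, hence inside $C_\infty$ and off $F$. Your propagation then gives $C_\infty\subseteq U(\boldsymbol{0})$, and therefore $\left\langle A[n^0)\right\rangle\setminus U(\boldsymbol{0})\subseteq F\cup\big([0,M]\times[0,M]\big)$ is finite. With this single correction (connectivity replaced by ``unique infinite component with finite complement,'' plus the adjusted choice of seed) the proof is complete; the grid identification, the four translation formulas, and the propagation rule derived from Lemma~\ref{lemma-3.4} are all correct.

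For comparison, the paper argues quite differently. For an arbitrary $\delta=\varepsilon^{n^0}_A[i)\beta^{i}\alpha^{j}$ it applies the two-sided translation $x\mapsto\beta^{i}\alpha^{i_0}\cdot x\cdot\beta^{j_0}\alpha^{j}$, which carries $\gamma_0$ exactly onto $\delta$ and a tail of the chain ${\downarrow_{\ll}}\gamma_0$ into ${\downarrow_{\ll}}\delta$; thus every chain in $\left\langle A[n^0)\right\rangle$ eventually enters $U(\boldsymbol{0})$. Finiteness is then forced by contradiction: infinitely many pairwise $\ll$-incomparable elements $\gamma_p$ outside $U(\boldsymbol{0})$, each the sole member of its chain outside $U(\boldsymbol{0})$, yield via $x\mapsto\beta x\alpha$ an infinite subset of $U(\boldsymbol{0})\setminus W(\boldsymbol{0})$ for a neighbourhood $W(\boldsymbol{0})$ with $\alpha\cdot W(\boldsymbol{0})\cdot\beta\subseteq U(\boldsymbol{0})$, contradicting Lemma~\ref{lemma-3.4}. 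Moreover, the paper treats the case $\gamma_0\in\mathscr{C}_\mathbb{N}$ separately by citing Theorem~1 of \cite{Gutik-2015}, whereas your percolation scheme handles both cases uniformly with only the four unit translations and no external reference. That uniformity and locality is what your approach buys; its price is the combinatorial connectivity input, which must be stated in the corrected form above.
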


\begin{proof}
By Proposition~\ref{proposition-3.5}  there exists $\gamma_0\in S$ such that ${\downarrow_{\ll}}\gamma_0\subset U(\boldsymbol{0})$. By Propositions~\ref{proposition-2.4} and~\ref{proposition-2.11} there exists  a subset of the form $\left\langle A[n^0)\right\rangle$ in $S$ such that $\gamma_0\in \left\langle A[n^0)\right\rangle$, and moreover Proposition~\ref{proposition-2.13} implies that ${\downarrow_{\ll}}\gamma_0\subset \left\langle A[n^0)\right\rangle$.

Suppose that $\gamma_0\in \mathscr{C}_\mathbb{N}$. Then we have that $\left\langle A[n^0)\right\rangle=\mathscr{C}_\mathbb{N}$. By Corollary~\ref{corollary-3.3} every point of $S$ is isolated in $S^{\boldsymbol{0}}$, and hence $\mathscr{C}_\mathbb{N}^{\boldsymbol{0}}=\mathscr{C}_\mathbb{N}\sqcup\{\boldsymbol{0}\}$ is a closed subspace of $S^{\boldsymbol{0}}$. Theorem 3.3.8 of \cite{Engelking-1989} implies that the space $\mathscr{C}_\mathbb{N}^{\boldsymbol{0}}$ with the induced topology from $S^{\boldsymbol{0}}$ is locally compact. Proposition~\ref{proposition-3.5} and Theorem~1 of \cite{Gutik-2015} imply that the space $S^{\boldsymbol{0}}$ is compact. This implies that the set  $\left\langle A[n^0)\right\rangle\setminus U(\boldsymbol{0})$ is finite, because $\left\langle A[n^0)\right\rangle=\mathscr{C}_\mathbb{N}$ in the case when an element $\gamma_0$ of the set $\left\langle A[n^0)\right\rangle$ belongs to $\mathscr{C}_\mathbb{N}$.

Suppose that $\gamma_0\in S\setminus\mathscr{C}_\mathbb{N}$ and let $\gamma_0=\varepsilon^{n^0}_A[i_0)\beta^{i_0}\alpha^{j_0}$ be the canonical representation of $\gamma_0$. Fix an arbitrary $\gamma\in \left\langle A[n^0)\right\rangle$ with the canonical representation $\delta=\varepsilon^{n^0}_A[i)\beta^{i}\alpha^{j}$. By Lemma~\ref{lemma-2.12}\eqref{lemma-2.12-4} we have that
\begin{align*}
  \beta^i\alpha^{i_0}\cdot \gamma_0\cdot \beta^{j_0}\alpha^j
   &=\beta^i\alpha^{i_0}\cdot \varepsilon^{n^0}_A[i_0)\beta^{i_0}\alpha^{j_0}\cdot \beta^{j_0}\alpha^j= \\
   &=\beta^i\varepsilon^{n^0}_A[i_0-i_0)\alpha^{i_0}\beta^{i_0}\alpha^{j_0}\beta^{j_0}\alpha^j= \\
   &=\beta^i\varepsilon^{n^0}_A[0)(\alpha^{i_0}\beta^{i_0})(\alpha^{j_0}\beta^{j_0})\alpha^j= \\
   &=\beta^i\varepsilon^{n^0}_A[0)\alpha^j= \\
   &=\varepsilon^{n^0}_A[i)\beta^i\alpha^j= \\
   &=\delta.
\end{align*}
By the separate continuity of the semigroup operation in $(S^{\boldsymbol{0}},\tau)$ there exists a compact-and-open neighbourhood $V_{\gamma}(\boldsymbol{0})$ of $\boldsymbol{0}$ such that $\beta^i\alpha^{i_0}\cdot V_{\gamma}(\boldsymbol{0})\cdot \beta^{j_0}\alpha^j\subseteq U(\boldsymbol{0})$.  Lemma~\ref{lemma-3.4} implies that the set $U(\boldsymbol{0})\setminus V(\boldsymbol{0})$ is finite. Since ${\downarrow_{\ll}}\gamma_0\subset U(\boldsymbol{0})$ there exists $\gamma_1\ll\gamma_0$ such that ${\downarrow_{\ll}}\gamma_1\subset V(\boldsymbol{0})$. By Lemma~\ref{lemma-2.14}\eqref{lemma-2.14-3} there exists $k\in\omega$ such that $\gamma_1=\varepsilon^{n^0}_A[i_0+k)\beta^{i_0+k}\alpha^{j_0+k}$ is the canonical representation of $\gamma_1$. Then we get that
\begin{align*}
  \beta^i\alpha^{i_0}\cdot \gamma_1\cdot \beta^{j_0}\alpha^j
   &=\beta^i\alpha^{i_0}\cdot \varepsilon^{n^0}_A[i_0+k)\beta^{i_0+k}\alpha^{j_0+k}\cdot \beta^{j_0}\alpha^j= \\
   &=\beta^i\varepsilon^{n^0}_A[i_0-i_0+k)\alpha^{i_0}\beta^{i_0+k}\alpha^{j_0+k}\beta^{j_0}\alpha^j= \\
   &=\beta^i\varepsilon^{n^0}_A[k)(\alpha^{i_0}\beta^{i_0})\beta^k\alpha^k(\alpha^{j_0}\beta^{j_0})\alpha^j= \\
   &=\beta^i\varepsilon^{n^0}_A[k)\beta^k\alpha^k\alpha^j= \\
   &=\varepsilon^{n^0}_A[i+k)\beta^{i+k}\alpha^{j+k}\ll\\
   &\ll\delta,
\end{align*}
and hence $\beta^i\alpha^{i_0}\cdot {\downarrow_{\ll}}\gamma_1\cdot \beta^{j_0}\alpha^j= {\downarrow_{\ll}}(\beta^i\alpha^{i_0}\cdot \gamma_1\cdot \beta^{j_0}\alpha^j)$. This implies that for any neighbourhood $U(\boldsymbol{0})$  of the zero $\boldsymbol{0}$ in $(S^{\boldsymbol{0}},\tau)$ and any $\gamma\in \left\langle A[n^0)\right\rangle$ there exists $\delta \in\left\langle A[n^0)\right\rangle$  such that $\delta\ll\gamma$ and ${\downarrow_{\ll}}\delta\subseteq U(\boldsymbol{0})$.

Suppose to the contrary that the set  $\left\langle A[n^0)\right\rangle\setminus U(\boldsymbol{0})$ is infinite. Then the above arguments imply that there exists an infinite sequence $\left\{\gamma_p\right\}_{p\in\omega}\subseteq \left\langle A[n^0)\right\rangle\setminus U(\boldsymbol{0})$ such that all elements of the sequence $\left\{\gamma_p\right\}_{p\in\omega}$ are incomparable with respect to the partial order $\ll$ on $S^{\boldsymbol{0}}$ and ${\downarrow_{\ll}}\left\{\gamma_p\right\}_{p\in\omega}\setminus U(\boldsymbol{0})=\left\{\gamma_p\right\}_{p\in\omega}$. By the separate continuity of the semigroup operation in $(S^{\boldsymbol{0}},\tau)$ there exists a compact-and-open neighbourhood $W(\boldsymbol{0})$ of $\boldsymbol{0}$ such that $\alpha\cdot W(\boldsymbol{0})\cdot \beta\subseteq U(\boldsymbol{0})$. The above arguments imply that  $\left\{\gamma_p\right\}_{p\in\omega}\cap U(\boldsymbol{0})=\varnothing$ and $\left\{\beta\gamma_p\alpha\right\}_{p\in\omega} \subset U(\boldsymbol{0})$. But $\left\{\beta\gamma_p\alpha\right\}_{p\in\omega} \cap W(\boldsymbol{0})=\varnothing$ and hence the set $U(\boldsymbol{0})\setminus W(\boldsymbol{0})$ is infinite, which contradicts Lemma~\ref{lemma-3.4}. The obtained contradiction completes the proof of the proposition.
\end{proof}

\begin{proposition}\label{proposition-3.7}
Let $\tau$ be a non-discrete Hausdorff locally compact shift-continuous topology on the semigroup $S^{\boldsymbol{0}}$. If there exists  a subset of the form $\left\langle A[n^0)\right\rangle$ in $S$ such that the set  $\left\langle A[n^0)\right\rangle\cap U(\boldsymbol{0})$ is infinite then the set $\left\langle A[n^0)\right\rangle\setminus U(\boldsymbol{0})$ is finite.
\end{proposition}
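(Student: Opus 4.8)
The plan is to identify $\left\langle A[n^0)\right\rangle$ with the quarter-lattice $\omega\times\omega$ by recording the exponents of the canonical representation, i.e. by sending $\varepsilon^{n^0}_A[i)\beta^i\alpha^j$ (respectively $\beta^i\alpha^j$, when $\left\langle A[n^0)\right\rangle=\mathscr{C}_\mathbb{N}$) to the pair $(i,j)$, and to put
\begin{equation*}
E=\left\{(i,j)\in\omega\times\omega\colon \varepsilon^{n^0}_A[i)\beta^i\alpha^j\in U(\boldsymbol{0})\right\}.
\end{equation*}
Under this bijection the hypothesis that $\left\langle A[n^0)\right\rangle\cap U(\boldsymbol{0})$ is infinite means precisely that $E$ is infinite, and the assertion to be proved reduces to the purely combinatorial claim that $E$ is cofinite in $\omega\times\omega$. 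Throughout I use that, by Corollary~\ref{corollary-3.3}, every point of $S$ is isolated in $S^{\boldsymbol{0}}$, so that compact-and-open neighbourhoods of $\boldsymbol{0}$ are available and Lemma~\ref{lemma-3.4} applies to any two of them.

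The first step is to turn separate continuity into a propagation property for $E$. Using Lemma~\ref{lemma-2.12} I would record the four one-sided translation formulas
\begin{align*}
\beta\cdot\varepsilon^{n^0}_A[i)\beta^i\alpha^j &=\varepsilon^{n^0}_A[i+1)\beta^{i+1}\alpha^j, & \varepsilon^{n^0}_A[i)\beta^i\alpha^j\cdot\alpha &=\varepsilon^{n^0}_A[i)\beta^i\alpha^{j+1},\\
\alpha\cdot\varepsilon^{n^0}_A[i)\beta^i\alpha^j &=\varepsilon^{n^0}_A[i-1)\beta^{i-1}\alpha^j, & \varepsilon^{n^0}_A[i)\beta^i\alpha^j\cdot\beta &=\varepsilon^{n^0}_A[i)\beta^i\alpha^{j-1},
\end{align*}
the last two being valid for $i\geqslant 1$ and $j\geqslant 1$ respectively; they realise the four lattice shifts $(i,j)\mapsto(i+1,j),(i,j+1),(i-1,j),(i,j-1)$. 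Each of these maps is a one-sided translation, hence continuous, and fixes $\boldsymbol{0}$. So, exactly as in Lemma~\ref{lemma-3.5} and Proposition~\ref{proposition-3.6}, for left translation by $\beta$ separate continuity together with local compactness gives a compact-and-open $V(\boldsymbol{0})$ with $\beta\cdot V(\boldsymbol{0})\subseteq U(\boldsymbol{0})$; Lemma~\ref{lemma-3.4} then makes $U(\boldsymbol{0})\setminus V(\boldsymbol{0})$ finite, so $\beta x\in U(\boldsymbol{0})$ for all but finitely many $x\in U(\boldsymbol{0})\cap S$. Applying this germ-type argument to all four translations produces a single finite set $F\subseteq E$ with the property that, whenever $(i,j)\in E\setminus F$, every lattice-neighbour of $(i,j)$ lying in $\omega\times\omega$ again belongs to $E$.

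The conclusion is then a connectivity argument on the grid graph on $\omega\times\omega$ (orthogonal neighbours joined by edges). Removing the finite set $F$ leaves a unique infinite connected component $G$, and its complement $\omega\times\omega\setminus G$ is finite, being contained in $\{(i,j)\colon i+j\leqslant m\}$ for $m=\max_{(k,l)\in F}(k+l)$. The propagation property says that no edge of $G$ joins $E\cap G$ to its complement in $G$, and the same holds for $G\setminus E$; thus $E\cap G$ is clopen in the connected graph $G$. Since $E$ is infinite while $\omega\times\omega\setminus G$ is finite, $E\cap G$ is non-empty, whence $E\cap G=G$ and $G\subseteq E$. Therefore $\omega\times\omega\setminus E\subseteq\omega\times\omega\setminus G$ is finite, i.e. $\left\langle A[n^0)\right\rangle\setminus U(\boldsymbol{0})$ is finite, as required.

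The step I expect to be the main obstacle is organising the four germ arguments into a single finite exceptional set while keeping the boundary cases harmless: when $i=0$ (or $j=0$) the map $\alpha\cdot(-)$ (or $(-)\cdot\beta$) need not stay inside $\left\langle A[n^0)\right\rangle$, but in those cases the lattice-neighbour $(i-1,j)$ (or $(i,j-1)$) simply does not exist, so no propagation is demanded and the exceptions remain finite. A secondary technical point, already dealt with in the same fashion in Lemma~\ref{lemma-3.5} and Proposition~\ref{proposition-3.6}, is replacing the open neighbourhood furnished by continuity with a compact-and-open one so that Lemma~\ref{lemma-3.4} becomes applicable.
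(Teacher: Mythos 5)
Your proof is correct, but it takes a genuinely different route from the paper's. The paper argues order-theoretically: it first shows, by the same Lemma~\ref{lemma-3.4} germ trick you use, that some $\gamma_0\in\left\langle A[n^0)\right\rangle$ has its entire cone ${\downarrow_{\ll}}\gamma_0$ contained in $U(\boldsymbol{0})$, and then invokes the computations of Proposition~\ref{proposition-3.6}: there such a cone is transported below an arbitrary element of $\left\langle A[n^0)\right\rangle$ by the two-sided translations $\beta^i\alpha^{i_0}\cdot(-)\cdot\beta^{j_0}\alpha^j$, and the final contradiction is extracted from an infinite $\ll$-antichain $\left\{\gamma_p\right\}$ outside $U(\boldsymbol{0})$ whose shifts $\beta\gamma_p\alpha$ all land in $U(\boldsymbol{0})\setminus W(\boldsymbol{0})$, violating Lemma~\ref{lemma-3.4}. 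You dispense with the order $\ll$, downward cones and antichains altogether: after the coordinatization $\varepsilon^{n^0}_A[i)\beta^i\alpha^j\mapsto(i,j)$ (legitimate, since canonical representations are unique, as noted in the proof of Lemma~\ref{lemma-2.14}), the four unit-translation formulas from Lemma~\ref{lemma-2.12} together with the germ argument yield your local propagation property, and connectivity of the grid $\omega\times\omega$ minus a finite set finishes the job; your handling of the boundary cases $i=0$, $j=0$ (where $\alpha\cdot(-)$ or $(-)\cdot\beta$ leaves $\left\langle A[n^0)\right\rangle$, but no propagation is needed) is also right. Your version is more elementary and self-contained --- in particular it supplies in full the details the paper compresses into ``in a similar way as in the proof of Proposition~\ref{proposition-3.6}'' --- whereas the paper's proof is shorter because it reuses the Section~\ref{sec-2} order machinery (Lemmas~\ref{lemma-2.14} and~\ref{lemma-2.16}) and the work already done in Proposition~\ref{proposition-3.6}. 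Both arguments ultimately rest on the same two pillars: the commutation relations of Lemma~\ref{lemma-2.12} and the finiteness of differences of compact-and-open neighbourhoods of zero (Lemma~\ref{lemma-3.4}).
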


\begin{proof}
We claim that there exists $\gamma_0\in \left\langle A[n^0)\right\rangle$ such that ${\downarrow_{\ll}}\gamma_0\subset U(\boldsymbol{0})$. Suppose the contrary: the set ${\downarrow_{\ll}}\gamma_0\setminus U(\boldsymbol{0})$ is infinite for any $\gamma_0\in S$. By the separate continuity of the semigroup operation in $S^{\boldsymbol{0}}$ there exists a neighbourhood $W(\boldsymbol{0})\subseteq V(\boldsymbol{0})$ of  $\boldsymbol{0}$ in  $(S^{\boldsymbol{0}},\tau)$ such that $\beta\cdot W(\boldsymbol{0})\cdot\alpha\subseteq V(\boldsymbol{0})$. Then  the set $U(\boldsymbol{0})\setminus V(\boldsymbol{0})$ is infinite, which contradicts Lemma~\ref{lemma-3.4}. Hence ${\downarrow_{\ll}}\gamma_0\subset U(\boldsymbol{0})$ for some $\gamma_0\in \left\langle A[n^0)\right\rangle$. Next, in a similar way as in the proof of Proposition~\ref{proposition-3.6} it can be show that the set $\left\langle A[n^0)\right\rangle\setminus U(\boldsymbol{0})$ is finite.
\end{proof}

\begin{proposition}\label{proposition-3.8}
Let $\tau$ be a non-discrete Hausdorff locally compact shift-continuous topology on the semigroup $S^{\boldsymbol{0}}$. Then for any subset of the form $\left\langle A[n^0)\right\rangle$ in $S$ the set $\left\langle A[n^0)\right\rangle\setminus U(\boldsymbol{0})$ is finite.
\end{proposition}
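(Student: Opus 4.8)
The plan is to reduce the statement to Proposition~\ref{proposition-3.7}. Since that proposition already guarantees that any column with infinite trace in $U(\boldsymbol{0})$ has finite complement in $U(\boldsymbol{0})$, it suffices to prove that $\left\langle B[m^0)\right\rangle\cap U(\boldsymbol{0})$ is \emph{infinite} for every subset of the form $\left\langle B[m^0)\right\rangle\subseteq S$. By Proposition~\ref{proposition-3.6} I may fix one ``good'' column $\left\langle A[n^0)\right\rangle\subseteq S$ for which $\left\langle A[n^0)\right\rangle\setminus U(\boldsymbol{0})$ is finite. The whole difficulty is that the machinery obtained so far (Lemma~\ref{lemma-2.16} and Propositions~\ref{proposition-3.6}--\ref{proposition-3.7}) is \emph{internal} to a single column, while $A[n^0)$ and $B[m^0)$ may have completely different ``shapes''; so the crux is to exhibit one explicit element of $S$ whose left translation carries the good column into the prescribed target column.

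Next I would build this bridge. Consider the infinite family $\gamma_k:=\varepsilon^{n^0}_A[0)\alpha^k\in\left\langle A[n^0)\right\rangle$, $k\in\omega$ (the canonical representations with $i=0$, $j=k$), together with the element $a:=\varepsilon^{m^0}_B[0)\,\alpha^{n^0}\in S$, which belongs to $S$ since $\varepsilon^{m^0}_B[0)\in\left\langle B[m^0)\right\rangle\subseteq S$ and $\alpha^{n^0}\in\mathscr{C}_\mathbb{N}\subseteq S$. The key point is that $\alpha^{n^0}$ shifts every point past all the gaps of $A$: since $\operatorname{ran}\alpha^{n^0}=[n^0+1,\infty)\subseteq[n^0)\subseteq A[n^0)$ and $\varepsilon^{n^0}_A[0)$ is the identity on $A[n^0)$, one has $\alpha^{n^0}\varepsilon^{n^0}_A[0)=\alpha^{n^0}$, whence
\begin{equation*}
a\gamma_k=\varepsilon^{m^0}_B[0)\,\alpha^{n^0}\varepsilon^{n^0}_A[0)\,\alpha^k=\varepsilon^{m^0}_B[0)\,\beta^0\alpha^{n^0+k}.
\end{equation*}
The right-hand side is, by the very definition of $\left\langle B[m^0)\right\rangle$, an element of that column (it has the form $\varepsilon^{m^0}_B[i)\beta^i\alpha^j$ with $i=0$, $j=n^0+k$), and distinct values of $k$ give distinct shifts, hence pairwise distinct elements. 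Thus left translation by $a$ maps the family $\{\gamma_k\}$ injectively into the target column. The degenerate case $A=\varnothing$ (that is, $\left\langle A[n^0)\right\rangle=\mathscr{C}_\mathbb{N}$, $n^0=0$) is covered automatically, since then $\gamma_k=\alpha^k$ and $a=\varepsilon^{m^0}_B[0)$.

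Finally I would transfer cardinality through the topology exactly as in the previous propositions. By separate continuity of the multiplication in $(S^{\boldsymbol{0}},\tau)$ and $a\cdot\boldsymbol{0}=\boldsymbol{0}$, there is a compact-and-open neighbourhood $V(\boldsymbol{0})$ of $\boldsymbol{0}$ with $a\cdot V(\boldsymbol{0})\subseteq U(\boldsymbol{0})$; by Lemma~\ref{lemma-3.4} the inclusion $\left\langle A[n^0)\right\rangle\setminus V(\boldsymbol{0})\subseteq(\left\langle A[n^0)\right\rangle\setminus U(\boldsymbol{0}))\cup(U(\boldsymbol{0})\setminus V(\boldsymbol{0}))$ shows this set is finite, so all but finitely many $\gamma_k$ lie in $V(\boldsymbol{0})$. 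Consequently the distinct elements $a\gamma_k=\varepsilon^{m^0}_B[0)\beta^0\alpha^{n^0+k}$ lie, up to finitely many terms, in $U(\boldsymbol{0})$, producing infinitely many distinct elements of $\left\langle B[m^0)\right\rangle\cap U(\boldsymbol{0})$. Proposition~\ref{proposition-3.7} then yields that $\left\langle B[m^0)\right\rangle\setminus U(\boldsymbol{0})$ is finite, as required. I expect the main obstacle to be precisely the construction and verification of the multiplier $a$: one must see that $\alpha^{n^0}$ ``flattens'' $A[n^0)$ onto the interval $[n^0)$ (landing inside $\mathscr{C}_\mathbb{N}$) while the idempotent $\varepsilon^{m^0}_B[0)$ then ``re-engraves'' the prescribed gap pattern $B[m^0)$, and to confirm that the images genuinely lie in $\left\langle B[m^0)\right\rangle$ and are pairwise distinct; once this algebraic bridge is in place, the remainder is the routine separate-continuity transfer governed by Lemma~\ref{lemma-3.4}.
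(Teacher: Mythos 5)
Your proof is correct, and it reorganizes the paper's argument rather than repeating it. Both proofs share the same skeleton: Proposition~\ref{proposition-3.6} supplies one column $\left\langle A[n^0)\right\rangle$ with $\left\langle A[n^0)\right\rangle\setminus U(\boldsymbol{0})$ finite; a fixed left translation pushes a tail of that column into the prescribed column; Lemma~\ref{lemma-3.4} transfers finiteness between compact-and-open neighbourhoods; and Proposition~\ref{proposition-3.7} concludes. The difference is how the transfer is decomposed. The paper routes it through $\mathscr{C}_\mathbb{N}$ in two stages: left multiplication by $\alpha^{n^0}$ sends the elements $\varepsilon^{n^0}_A[0)\alpha^{l}$ to $\alpha^{n^0+l}$, settling the proposition for $\mathscr{C}_\mathbb{N}$ first; then left multiplication by the idempotent of an arbitrary target column carries a tail of $\mathscr{C}_\mathbb{N}$ into that column. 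You compose these two multipliers into the single bridge element $a=\varepsilon^{m^0}_B[0)\alpha^{n^0}$ and transfer directly from the good column to the target column in one step. That buys uniformity: $\mathscr{C}_\mathbb{N}$ is not a separate case but the degenerate instance of your formula, and Proposition~\ref{proposition-3.7} is invoked only once per column. It also sidesteps an orientation slip in the paper's second stage: the witnessing elements there are $\varepsilon^{n^0}_A[0)\cdot\beta^{l}$, but under the paper's composition convention (the left factor acts first, which is what makes $\alpha\beta=\mathbb{I}$) one has $\varepsilon^{n^0}_A[0)\cdot\beta^{l}=\beta^{l}$ for every $l\geqslant n^0-1$, so these elements fall back into $\mathscr{C}_\mathbb{N}$ rather than into $\left\langle A[n^0)\right\rangle$ and do not witness the hypothesis of Proposition~\ref{proposition-3.7}; the products must be formed as $\varepsilon^{n^0}_A[0)\cdot\alpha^{l}$ (or as $\beta^{l}\cdot\varepsilon^{n^0}_A[0)$), which is exactly the orientation your computation $a\gamma_k=\varepsilon^{m^0}_B[0)\beta^{0}\alpha^{n^0+k}$ uses and verifies. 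The only step you assert without proof --- the existence of a compact-and-open $V(\boldsymbol{0})$ with $a\cdot V(\boldsymbol{0})\subseteq U(\boldsymbol{0})$ --- rests on compact-and-open neighbourhoods forming a base at $\boldsymbol{0}$ (local compactness together with Corollary~\ref{corollary-3.3}), which is the same standing convention the paper relies on throughout, so nothing essential is missing.
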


\begin{proof}
Fix an arbitrary neighbourhood $U(\boldsymbol{0})$ of the zero $\boldsymbol{0}$ in $(S^{\boldsymbol{0}},\tau)$. First we shall show that the set $\left\langle A[n^0)\right\rangle\setminus U(\boldsymbol{0})$ is finite for $\left\langle A[n^0)\right\rangle=\mathscr{C}_\mathbb{N}$. By Proposition~\ref{proposition-3.6} there exists  a subset of the form $\left\langle A[n^0)\right\rangle$ in $S$ such that the set  $\left\langle A[n^0)\right\rangle\setminus U(\boldsymbol{0})$ is finite. Then there exists a positive integer $k_U$ such that the element $\gamma_l=\varepsilon^{n^0}_A[0)\beta^{0}\alpha^{l}\in \left\langle A[n^0)\right\rangle$ belongs to $U(\boldsymbol{0})$ for any $l\geqslant k_U$. Since the set $U(\boldsymbol{0})\setminus W(\boldsymbol{0})$ finite for any compact-and-open neighbourhood $W(\boldsymbol{0})$ of $\boldsymbol{0}$, there exists a positive integer $k_W\geqslant k_U$ such that the element $\gamma_l=\varepsilon^{n^0}_A[0)\beta^{0}\alpha^{l}\in \left\langle A[n^0)\right\rangle$ belongs to $W(\boldsymbol{0})$ for any $l\geqslant k_W$.

By the separate continuity of the semigroup operation in $S^{\boldsymbol{0}}$ there exists a compact-and-open neighbourhood $W(\boldsymbol{0})\subseteq U(\boldsymbol{0})$ of the zero $\boldsymbol{0}$ in $(S^{\boldsymbol{0}},\tau)$ such that $\alpha^{n^0}\cdot W(\boldsymbol{0})\subseteq U(\boldsymbol{0})$. Since $\mathrm{ran}(\alpha^{n^0})\subseteq \mathrm{dom}(\varepsilon^{n^0}_A[0))$ and $\varepsilon^{n^0}_A[0)$ is an idempotent of $S$, we get that $\alpha^{n^0}\cdot\varepsilon^{n^0}_A[0)=\alpha^{n^0}$, and hence
\begin{equation*}
  \alpha^{n^0}\cdot \gamma_l=\alpha^{n^0}\cdot\varepsilon^{n^0}_A[0)\beta^{0}\alpha^{l}= \alpha^{n^0}\beta^{0}\alpha^{l}= \alpha^{n^0+l}\in U(\boldsymbol{0}),
\end{equation*}
which implies that the set $\mathscr{C}_\mathbb{N}\cap U(\boldsymbol{0})$ is infinite. Then by Proposition~\ref{proposition-3.7} the set $\mathscr{C}_\mathbb{N}\setminus U(\boldsymbol{0})$ is finite.

Fix any subset of the form $\left\langle A[n^0)\right\rangle$ in $S$ distinct from $\mathscr{C}_\mathbb{N}$. By the separate continuity of the semigroup operation in $S^{\boldsymbol{0}}$ there exists a neighbourhood $V(\boldsymbol{0})\subseteq U(\boldsymbol{0})$ of the zero $\boldsymbol{0}$ in $(S^{\boldsymbol{0}},\tau)$ such that $\varepsilon^{n^0}_A[0)\cdot V(\boldsymbol{0})\subseteq U(\boldsymbol{0})$. By the previous part of the proof the set $\mathscr{C}_\mathbb{N}\setminus V(\boldsymbol{0})$ is finite, and hence there exists a positive integer $k_V$ such that the element $\gamma_l=\beta^{l}\in \mathscr{C}_\mathbb{N}$ belongs to $V(\boldsymbol{0})$ for any $l\geqslant k_V$. This implies that the neighbourhood $U(\boldsymbol{0})$ contains the infinite set $\left\{\varepsilon^{n^0}_A[0)\cdot \beta^{l}\colon l\geqslant k_V\right\}$. By Proposition~\ref{proposition-3.7} the set $\left\langle A[n^0)\right\rangle\setminus U(\boldsymbol{0})$ is finite.
\end{proof}

Lemma~\ref{lemma-3.9} shows that on a semigroup $S^{\boldsymbol{0}}$, where $S$ be a some subsemigroup of $\mathbf{I}\mathbb{N}_{\infty}$ which contains $\mathscr{C}_\mathbb{N}$, there exists a Hausdorff topology $\tau_{\operatorname{\textsf{Ac}}}$ such that $(S^{\boldsymbol{0}},\tau_{\operatorname{\textsf{Ac}}})$ is a compact semitopological semigroup.

\begin{lemma}\label{lemma-3.9}
Let $T$ be a semigroup with $\textsf{F}$-property and $T^0$ be the semigroup $T$ with adjoined zero. Let $\tau_{\operatorname{\textsf{Ac}}}$ be the topology on $T^0$  such that
\begin{itemize}
  \item[$(i)$] every element of $T$ is an isolated point in the space $(T^{0},\tau_{\operatorname{\textsf{Ac}}})$;
  \item[$(ii)$] the family $\mathscr{B}({0})=\left\{U\subseteq T^{{0}}\colon U\ni {0} \hbox{~and~} T^{{0}}\setminus U \hbox{~is finite}\right\}$ determines a base of the topology $\tau_{\operatorname{\textsf{Ac}}}$ at zero ${0}\in T^{{0}}$.
\end{itemize}
Then $(T^{0},\tau_{\operatorname{\textsf{Ac}}})$ is a Hausdorff compact semitopological  semigroup.
\end{lemma}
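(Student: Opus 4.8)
The plan is to recognise $(T^0,\tau_{\operatorname{\textsf{Ac}}})$ as the Alexandroff one-point compactification of the discrete space $T$, and then to verify separate continuity of the semigroup operation by invoking the $\textsf{F}$-property precisely at the adjoined zero $0$. Recall that $0$ is an absorbing element, so $0\cdot x=x\cdot 0=0$ for all $x\in T^0$; this will make every translation fix $0$.

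First I would check that $\tau_{\operatorname{\textsf{Ac}}}$ is a well-defined topology and is Hausdorff. Since each element of $T$ is isolated and, by $(ii)$, the neighbourhoods of $0$ are exactly the subsets of $T^0$ containing $0$ with finite complement, two distinct points of $T$ are separated by their singletons, while a point $x\in T$ and $0$ are separated by the open set $\{x\}$ and the cofinite neighbourhood $T^0\setminus\{x\}$ of $0$. Hence $(T^0,\tau_{\operatorname{\textsf{Ac}}})$ is Hausdorff. For compactness I would take an arbitrary open cover $\mathscr{U}$ of $T^0$: some member $U_0\in\mathscr{U}$ contains $0$, so by $(ii)$ the complement $T^0\setminus U_0$ is a finite subset of $T$, and covering each of its finitely many (isolated) points by a single member of $\mathscr{U}$ produces a finite subcover. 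Thus $(T^0,\tau_{\operatorname{\textsf{Ac}}})$ is compact.

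The substantive part is separate continuity, that is, continuity of every left translation $\lambda_a\colon x\mapsto a\cdot x$ and every right translation $\rho_a\colon x\mapsto x\cdot a$ for $a\in T^0$. Continuity at any isolated point is automatic, since the singleton of that point is open and is mapped into any prescribed neighbourhood of its image. The translation by $0$ is the constant map onto $0$ and hence continuous. The only remaining case is continuity of $\lambda_a$ with $a\in T$ at the point $0$. Given a basic neighbourhood $V$ of $\lambda_a(0)=a\cdot 0=0$, the complement $T^0\setminus V=\{b_1,\dots,b_n\}$ is finite, and the set of ``bad'' arguments $\bigcup_{i=1}^{n}\{x\in T\colon a\cdot x=b_i\}$ is finite by the $\textsf{F}$-property. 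Its complement $U$ is therefore a cofinite neighbourhood of $0$ satisfying $\lambda_a(U)\subseteq V$, which gives continuity of $\lambda_a$ at $0$. The argument for right translations is symmetric, using the finiteness of the sets $\{x\in T\colon x\cdot a=b_i\}$, the second half of the $\textsf{F}$-property.

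The step I expect to be the main obstacle is exactly this last one: turning the cofinite preimage condition into a genuine cofinite neighbourhood of $0$. It is the only place where the hypothesis is used in an essential way, and it is precisely the finiteness supplied by the $\textsf{F}$-property that makes the preimages of finite sets finite, hence their complements open at $0$. Once this is established, $(T^0,\tau_{\operatorname{\textsf{Ac}}})$ is a Hausdorff compact semitopological semigroup, completing the proof.
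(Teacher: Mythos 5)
Your proof is correct and follows essentially the same route as the paper: continuity need only be checked for translations by elements of $T$ at the point $0$, and there the $\textsf{F}$-property makes the preimage of the finite complement $T^0\setminus U(0)$ finite, so deleting it yields a cofinite neighbourhood carried into $U(0)$ (the paper merges the left and right cases into one finite set $K_\gamma$, which is only a cosmetic difference). Your explicit verification of Hausdorffness and compactness is fine; the paper treats these as immediate from the one-point-compactification structure and omits them.
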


\begin{proof}
Since all points of the semigroup $T$ are isolated in $(T^0,\tau_{\operatorname{\textsf{Ac}}})$ it is sufficient to show that the semigroup operation in $(T^0,\tau_{\operatorname{\textsf{Ac}}})$ is separately continuous in the following two cases:
\begin{equation*}
  \gamma\cdot{0}={0} \qquad \hbox{and} \qquad {0}\cdot\gamma={0}, \qquad \hbox{for~} \gamma\in T.
\end{equation*}
Fix an arbitrary open set $U({0})\in\mathscr{B}({0})$. Then $K=T^{{0}}\setminus U({0})$ is finite. Let
\begin{equation*}
  K_\gamma=\left\{\delta\in T\colon \gamma\cdot\delta\in K\right\}\cup\left\{\delta\in T\colon\delta\cdot\gamma\in K\right\}.
\end{equation*}
Since the semigroup $T$ has $\textsf{F}$-property, the set $K_\gamma$ is finite. Then
\begin{equation*}
  \gamma\cdot V({0})\subseteq U({0}) \qquad \hbox{and} \qquad V({0})\cdot \gamma\subseteq U({0})
\end{equation*}
for $V({0})=U({0})\setminus K_\gamma\in\mathscr{B}({0})$.
\end{proof}

\begin{remark}\label{remark-3.10}
By Corollary~\ref{corollary-3.3} the discrete topology is a unique Hausdorff locally compact shift-continuous topology on a subsemigroup $S$  of $\mathbf{I}\mathbb{N}_{\infty}$  which contains $\mathscr{C}_\mathbb{N}$. So $\tau_{\operatorname{\textsf{Ac}}}$ is the unique compact shift-continuous topology on $S^{\boldsymbol{0}}$.
\end{remark}

\begin{theorem}\label{theorem-3.11}
Let $S$ be a some susemigroup of $\mathbf{I}\mathbb{N}_{\infty}$ which contains $\mathscr{C}_\mathbb{N}$. Then every Hausdorff locally compact shift-continuous topology $\tau$ on the semigroup $S^{\boldsymbol{0}}$ is either compact or discrete.
\end{theorem}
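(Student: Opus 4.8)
The plan is to prove the dichotomy by assuming that $\tau$ is non-discrete and showing that $(S^{\boldsymbol{0}},\tau)$ is then compact. By Corollary~\ref{corollary-3.3} every point of $S$ is isolated and $\boldsymbol{0}$ is the unique non-isolated point, while the discussion following it guarantees that $\boldsymbol{0}$ has a base of compact-and-open neighbourhoods; I fix one such $U(\boldsymbol{0})$. Since $U(\boldsymbol{0})$ is compact and open and its complement consists of isolated points, $(S^{\boldsymbol{0}},\tau)$ is compact if and only if $B:=S\setminus U(\boldsymbol{0})$ is finite (then $S^{\boldsymbol{0}}=U(\boldsymbol{0})\cup B$ is a finite union of a compact set and finitely many points), in which case Remark~\ref{remark-3.10} forces $\tau=\tau_{\operatorname{\textsf{Ac}}}$. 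Thus the whole theorem reduces to the single claim that $B$ is finite.

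The key device is the pair of self-maps $\mathsf{r}\colon x\mapsto\beta x\alpha$ and $\mathsf{l}\colon x\mapsto\alpha x\beta$ of $S^{\boldsymbol{0}}$. Both preserve $S$ and fix $\boldsymbol{0}$, and each is continuous, being a composition of two one-sided translations that are continuous by shift-continuity. Since $\alpha\beta=\mathbb{I}$ is the monoid identity, $\mathsf{l}(\mathsf{r}(x))=(\alpha\beta)x(\alpha\beta)=x$, so $\mathsf{r}$ is injective with left inverse $\mathsf{l}$. Continuity of $\mathsf{l}$ at $\boldsymbol{0}$ makes $\mathsf{l}^{-1}(U(\boldsymbol{0}))$ a neighbourhood of $\boldsymbol{0}$, so by Lemma~\ref{lemma-3.4} (every neighbourhood of $\boldsymbol{0}$ is cofinite in $U(\boldsymbol{0})$) the set $E:=\{x\in U(\boldsymbol{0})\colon\alpha x\beta\notin U(\boldsymbol{0})\}$ is finite. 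Now if $\gamma\in B$ satisfies $\mathsf{r}(\gamma)\in U(\boldsymbol{0})$, then $\mathsf{l}(\mathsf{r}(\gamma))=\gamma\notin U(\boldsymbol{0})$ forces $\mathsf{r}(\gamma)\in E$; injectivity of $\mathsf{r}$ then shows that $\{\gamma\in B\colon\beta\gamma\alpha\in U(\boldsymbol{0})\}$ injects into $E$ and is therefore finite.

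Finally I would exploit the $\ll$-structure. By Lemma~\ref{lemma-2.8} the down-set of each element is an $\omega$-chain enumerated by $\beta^n\gamma\alpha^n$, so $\mathsf{r}$ sends each element to the next one below it in this chain; and the comparability criterion of Lemma~\ref{lemma-2.14} together with Corollary~\ref{corollary-2.15} shows that $\ll$-comparability is an equivalence relation whose classes are infinite $\ll$-chains lying inside a single $\left\langle A[n^0)\right\rangle$ (or inside $\mathscr{C}_\mathbb{N}$), so $S$ is partitioned into such chains. By Proposition~\ref{proposition-3.8} each $\left\langle A[n^0)\right\rangle$ meets $B$ in a finite set, hence every chain meets $B$ finitely and, being infinite, has a lowest element $\delta$ of $B$ whose successor $\mathsf{r}(\delta)=\beta\delta\alpha$ lies in $U(\boldsymbol{0})$. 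Each such $\delta$ belongs to the finite set of the previous paragraph, and distinct chains produce distinct $\delta$'s, so only finitely many chains meet $B$; as each meets $B$ finitely, $B$ is finite, as required. The main obstacle the argument must surmount is exactly that Proposition~\ref{proposition-3.8} controls each piece only individually while $S$ may contain infinitely many pieces of unbounded ``noise''; it is the retraction identity $\mathsf{l}\circ\mathsf{r}=\mathrm{id}$ combined with the decomposition of $(S,\ll)$ into disjoint chains that upgrades these piecewise finiteness statements to the global finiteness of $B$.
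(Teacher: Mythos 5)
Your proposal is correct, and it runs on the same engine as the paper's proof --- Corollary~\ref{corollary-3.3} for discreteness of $S$, Lemma~\ref{lemma-3.4} for the almost-equality of compact-open neighbourhoods of $\boldsymbol{0}$, Proposition~\ref{proposition-3.8} for piecewise finiteness, and the conjugation $x\mapsto\beta x\alpha$ together with the identity $\alpha(\beta x\alpha)\beta=x$ --- but the final assembly is genuinely different. The paper splits into two cases according to whether $S$ is a union of finitely or infinitely many sets $\left\langle A[n^0)\right\rangle$, and in the infinite case argues by contradiction: it extracts (``without loss of generality'') an infinite sequence $\{\gamma_i\}$ in pairwise disjoint pieces with $\gamma_i\notin U(\boldsymbol{0})$ but $\beta\gamma_i\alpha\in U(\boldsymbol{0})$, and then shows $\{\beta\gamma_i\alpha\}\subseteq U(\boldsymbol{0})\setminus W(\boldsymbol{0})$ is infinite, contradicting Lemma~\ref{lemma-3.4}. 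You instead refine the decomposition from pieces to $\ll$-comparability classes (which Lemma~\ref{lemma-2.14}\eqref{lemma-2.14-3} and Corollary~\ref{corollary-2.15} indeed show form a partition of $S$ into $\omega$-chains), and run a direct counting argument: the $\ll$-least element of $B\cap C$ in each chain $C$ meeting $B$ lands, after applying $\mathsf{r}$, in the finite set $E$, and injectivity of $\mathsf{r}$ (via the left inverse $\mathsf{l}$) bounds the number of such chains. This buys two things: the two cases of the paper are handled uniformly without contradiction, and the paper's ``without loss of generality'' extraction of the sequence $\{\gamma_i\}$ --- which tacitly needs exactly your ``least element outside $U(\boldsymbol{0})$ of a chain'' observation --- is made fully explicit. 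Two cosmetic points: your ``lowest element $\delta$ of $B$'' should read ``lowest element of $C\cap B$'' (and its existence needs only that $C\cap B$ is finite and non-empty, not that $C$ is infinite), and your parenthetical gloss of Lemma~\ref{lemma-3.4} as ``every neighbourhood of $\boldsymbol{0}$ is cofinite in $U(\boldsymbol{0})$'' is legitimate only after the observation, which you do make earlier, that $\boldsymbol{0}$ has a base of compact-and-open neighbourhoods.
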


\begin{proof}
If the zero  ${\boldsymbol{0}}$ of the semigroup $S^{\boldsymbol{0}}$ is an isolated point in $(S^{\boldsymbol{0}},\tau)$ then  the space $S$ is locally compact. By Corollary~\ref{corollary-3.3}, $S$ is discrete and hence $(S^{\boldsymbol{0}},\tau)$ is discrete, too.

Suppose that $\boldsymbol{0}$ is a nonisolated point in $(S^{\boldsymbol{0}},\tau)$. Let $U(\boldsymbol{0})$ be any neighbourhood of $\boldsymbol{0}$ in $(S^{\boldsymbol{0}},\tau)$. By Proposition~\ref{proposition-3.8} for any subset of the form $\left\langle A[n^0)\right\rangle$ in $S$ the set $\left\langle A[n^0)\right\rangle\setminus U(\boldsymbol{0})$ is finite. If the semigroup $S$ is the union of a finite family $\left\{ \mathscr{C}_\mathbb{N}, \left\langle A_1[n^0_1)\right\rangle, \ldots,\left\langle A_k[n^0_k)\right\rangle\right\}$ of subsets of $S$, such  that $\min A_i=1$ and $n^0_i\geqslant 2+\max A_i$ for all positive integer $i\leqslant k$, $k\in\mathbb{N}$, then Proposition~\ref{proposition-3.8} implies that $S^{\boldsymbol{0}}\setminus U(\boldsymbol{0})$ is finite, and hence the space $(S^{\boldsymbol{0}},\tau)$ is compact.

Next, we suppose that the semigroup $S$ is the union of an infinite family of distinct sets
\begin{equation*}
\left\{ \mathscr{C}_\mathbb{N}, \left\langle A_{1}[n^0_{1})\right\rangle, \ldots,\left\langle A_{k}[n^0_{i_k})\right\rangle, \ldots\right\}
\end{equation*}
of $S$, such  that $\min A_{i}=1$ and $n^0_{i}\geqslant 2+\max A_{i}$ for ${i}\in \mathbb{N}$.
Suppose to the contrary that there exists a neighbourhood $U(\boldsymbol{0})$ of $\boldsymbol{0}$ in $(S^{\boldsymbol{0}},\tau)$ such that the set $S^{\boldsymbol{0}}\setminus U(\boldsymbol{0})$ is infinite. By Proposition~\ref{proposition-3.8} without loss of generality we may assume that there exists an infinite sequence $\left\{\gamma_i\right\}_{i\in\mathbb{N}}$ of distinct elements of $S^{\boldsymbol{0}}\setminus U(\boldsymbol{0})$ which satisfies the following properties:
\begin{enumerate}
  \item $\gamma_i\in \left\langle A_{i}[n^0_{i})\right\rangle$ for any $i\in\mathbb{N}$;
  \item $\beta\cdot\gamma_i\cdot\alpha\in U(\boldsymbol{0})$ for any $i\in\mathbb{N}$;
  \item if $i\neq j$  then $\left\langle A_{i}[n^0_{i})\right\rangle\cap \left\langle A_{j}[n^0_{j})\right\rangle=\varnothing$.
\end{enumerate}
By the separate continuity of the semigroup operation in $(S^{\boldsymbol{0}},\tau)$ there exists an open neighbourhood $W(\boldsymbol{0})\subset U(\boldsymbol{0})$ of $\boldsymbol{0}$ such that $\alpha\cdot W(\boldsymbol{0})\cdot \beta\subseteq U(\boldsymbol{0})$. Then our assumption implies that $\beta\cdot\gamma_i\cdot\alpha\notin W(\boldsymbol{0})$ for any $i\in\mathbb{N}$. Hence the set $U(\boldsymbol{0})\setminus W(\boldsymbol{0})$ is infinite which contradicts Lemma~\ref{lemma-3.4}. The obtained contradiction implies that the space $(S^{\boldsymbol{0}},\tau)$ is compact.
\end{proof}

Since the bicyclic monoid does not embeds into any Hausdorff compact topological semigroup \cite{Anderson-Hunter-Koch-1965}, Theorem~\ref{theorem-3.11} implies the following corollary.

\begin{corollary}\label{corollary-3.12}
Let $S$ be a some subsemigroup of   $\mathbf{I}\mathbb{N}_{\infty}$ which contains $\mathscr{C}_\mathbb{N}$. Then every Hausdorff locally compact semigroup topology on $S^{\boldsymbol{0}}$ is discrete.
\end{corollary}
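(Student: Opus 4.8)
The plan is to obtain Corollary~\ref{corollary-3.12} as an immediate consequence of Theorem~\ref{theorem-3.11} by discarding the compact alternative. First I would note that a semigroup topology is, by definition, jointly continuous, hence in particular separately continuous; so any Hausdorff locally compact semigroup topology $\tau$ on $S^{\boldsymbol{0}}$ is in particular a Hausdorff locally compact shift-continuous topology, and Theorem~\ref{theorem-3.11} applies without modification. It yields the dichotomy that $\tau$ is either compact or discrete, and it remains only to show that the compact case cannot occur.

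To rule out compactness I would argue by contradiction. Assume $\tau$ is compact, so that $(S^{\boldsymbol{0}},\tau)$ is a Hausdorff compact topological semigroup. By Remark~\ref{remark-1.1} the submonoid $\mathscr{C}_\mathbb{N}\subseteq S\subseteq S^{\boldsymbol{0}}$ is an isomorphic copy of the bicyclic monoid. Thus $(S^{\boldsymbol{0}},\tau)$ is a Hausdorff compact topological semigroup containing the bicyclic semigroup, which directly contradicts the theorem of Anderson, Hunter and Koch \cite{Anderson-Hunter-Koch-1965} asserting that the bicyclic semigroup admits no such embedding. If one prefers a literal match to the cited statement, one may instead pass to the closure $\overline{\mathscr{C}_\mathbb{N}}$ in $(S^{\boldsymbol{0}},\tau)$, which is a closed, hence compact, subsemigroup still containing the bicyclic copy, and apply the cited result to it. Therefore $\tau$ is not compact, and the dichotomy of Theorem~\ref{theorem-3.11} forces $\tau$ to be discrete.

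I do not expect a genuine obstacle in this argument: the only points requiring care are the automatic passage from joint to separate continuity, which is what makes Theorem~\ref{theorem-3.11} applicable, and the observation that the Anderson--Hunter--Koch obstruction concerns the purely algebraic presence of a bicyclic copy inside a compact topological semigroup, so no control over the induced subspace topology on $\mathscr{C}_\mathbb{N}$ is needed. (It is also worth remarking, though not logically necessary, that since $\mathscr{C}_\mathbb{N}$ is infinite the space $S^{\boldsymbol{0}}$ is infinite, so the compact and discrete alternatives are genuinely incompatible.) The corollary is thus a short deduction once Theorem~\ref{theorem-3.11} and Remark~\ref{remark-1.1} are available.
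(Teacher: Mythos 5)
Your proposal is correct and follows exactly the paper's own argument: the paper deduces Corollary~\ref{corollary-3.12} from Theorem~\ref{theorem-3.11} together with the Anderson--Hunter--Koch result \cite{Anderson-Hunter-Koch-1965} that the bicyclic monoid embeds in no Hausdorff compact topological semigroup, which rules out the compact alternative since $\mathscr{C}_\mathbb{N}\subseteq S^{\boldsymbol{0}}$. Your additional remarks (joint continuity implying separate continuity, and the optional passage to the closure) are correct refinements of the same one-line deduction.
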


\begin{remark}\label{remark-3.13}
In the paper \cite{Gutik-2015} the example that a counterpart of the statement of Corollary~\ref{corollary-3.12} (and hence the statement of Theorem~\ref{theorem-3.11}) does not hold when $\mathscr{C}_\mathbb{N}$ is a \v{C}ech-complete metrizable topological inverse semigroup is constructed.
\end{remark}

Later we need the following trivial lemma, which follows from separate continuity of the semigroup operation in semitopological semigroups.

\begin{lemma}\label{lemma-3.14}
Let $S$ be a Hausdorff semitopological semigroup and $I$ be a compact ideal in $S$. Then the Rees-quotient semigroup $S/I$ with the quotient topology is a Hausdorff semitopological semigroup.
\end{lemma}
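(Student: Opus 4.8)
The plan is to verify separately the two defining conditions of a Hausdorff semitopological semigroup for $S/I$: that the quotient topology is Hausdorff, and that the Rees multiplication is separately continuous. Throughout let $\pi\colon S\to S/I$ be the natural projection; it collapses $I$ to the zero $0$ of $S/I$, is a bijection on $S\setminus I$, and by the definition of the quotient topology it is a quotient map (and, by construction of the Rees quotient, a semigroup homomorphism). The single observation driving everything is that, since $S$ is Hausdorff and $I$ is compact, $I$ is closed in $S$, so $S\setminus I$ is open. Consequently $\pi^{-1}(\pi(V))=V$ for every $V\subseteq S\setminus I$, which shows that $\pi$ restricts to a homeomorphism of $S\setminus I$ onto the open subspace $(S/I)\setminus\{0\}$.

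For the Hausdorff property I would treat two kinds of pairs of distinct points. Two points of $(S/I)\setminus\{0\}$ are separated because this subspace is homeomorphic to the subspace $S\setminus I$ of the Hausdorff space $S$. To separate $0$ from a point $\pi(x)$ with $x\in S\setminus I$, I would use that a point and a disjoint compact set in a Hausdorff space can be enclosed in disjoint open sets: choose disjoint open $U\ni x$ and $W\supseteq I$ in $S$ with $U\subseteq S\setminus I$. Then $\pi^{-1}(\pi(U))=U$ and $\pi^{-1}(\pi(W))=W$ (the latter because $I\subseteq W$), so $\pi(U)$ and $\pi(W)$ are open; they contain $\pi(x)$ and $0$ respectively, and they are disjoint since $U\cap W=\varnothing$. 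This is the only place where compactness of $I$ is essential, and it is the main obstacle of the lemma; everything else is formal.

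For separate continuity, fix $a\in S/I$ and consider the left translation $\lambda_a$ on $S/I$; the argument for right translations is symmetric. If $a=0$ then $\lambda_a$ is the constant map onto $0$, and there is nothing to prove. If $a=\pi(c)$ with $c\in S\setminus I$, I would use that the left translation $x\mapsto cx$ on $S$ is continuous (as $S$ is semitopological) and that, because $I$ is an ideal, $cI\subseteq I$; hence the map $x\mapsto\pi(cx)$ of $S$ into $S/I$ is continuous and constant on the fibre $I$ of $\pi$. Since $\pi$ is a homomorphism we have $\lambda_a(\pi(x))=\pi(cx)$ for all $x\in S$, so $\lambda_a\circ\pi$ coincides with the continuous map $x\mapsto\pi(cx)$, and the universal property of the quotient topology yields continuity of $\lambda_a$. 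Thus both one-sided translations on $S/I$ are continuous, and together with the Hausdorff property established above this shows that $S/I$ is a Hausdorff semitopological semigroup.
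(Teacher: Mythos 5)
Your proof is correct. Note that the paper itself offers no argument at all here: it labels the lemma ``trivial'' and says it ``follows from separate continuity of the semigroup operation,'' so your write-up is precisely the filling-in of details that the authors omit, and it is the standard argument: compactness of $I$ gives closedness (hence $(S/I)\setminus\{0\}$ is an open subspace homeomorphic to $S\setminus I$), point--compact-set separation in a Hausdorff space separates $0$ from the other points, and separate continuity of the Rees multiplication drops down from $S$ via the universal property of the quotient map, using $cI\subseteq I$ so that translations respect the collapsed fibre. One small editorial quibble: your remark that the separation of $0$ from $\pi(x)$ is ``the only place where compactness of $I$ is essential'' slightly overstates the case, since compactness is also what gives you closedness of $I$ at the outset (mere closedness would not suffice for the separation step without regularity, but it is a second, independent use); this does not affect the validity of the proof.
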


\begin{lemma}\label{lemma-3.15}
Let $X$ be a Hausdorff locally compact space and $I$ be a compact subset of $X$. Then there exists an open neighbourhood $U(I)$ of $I$ with the compact closure $\overline{U(I)}$.
\end{lemma}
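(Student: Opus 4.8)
The plan is to exploit local compactness pointwise on $I$ and then compress to a finite situation using the compactness of $I$. First I would recall that for every point $x\in I$ the local compactness of $X$ provides an open neighbourhood $V_x$ of $x$ whose closure $\overline{V_x}$ is compact. The family $\left\{V_x\colon x\in I\right\}$ is then an open cover of the set $I$.

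Next I would invoke compactness of $I$ to extract a finite subcover: there exist $x_1,\ldots,x_n\in I$ such that $I\subseteq V_{x_1}\cup\cdots\cup V_{x_n}$. I would define the required neighbourhood by
\begin{equation*}
  U(I)=V_{x_1}\cup\cdots\cup V_{x_n},
\end{equation*}
which is open as a finite union of open sets and contains $I$ by construction.

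The final step is to verify that $\overline{U(I)}$ is compact. Here I would use the elementary fact that the closure of a finite union equals the union of the closures, so
\begin{equation*}
  \overline{U(I)}\subseteq \overline{V_{x_1}}\cup\cdots\cup\overline{V_{x_n}}.
\end{equation*}
The right-hand side is a finite union of compact sets and hence compact. Since $\overline{U(I)}$ is a closed subset of this compact set, and a closed subset of a compact space is compact, I conclude that $\overline{U(I)}$ is compact, completing the argument. I do not anticipate any genuine obstacle, as the statement is a routine consequence of local compactness together with the compactness of $I$; the only point requiring care is justifying that $\overline{U(I)}$ is closed inside a compact set and therefore compact, which relies on the standard topological facts that finite unions of compact sets are compact and that closed subsets of compact spaces are compact (the Hausdorff hypothesis guaranteeing, in particular, that the $\overline{V_x}$ are well-behaved).
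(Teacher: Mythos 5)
Your proposal is correct and follows essentially the same argument as the paper: cover $I$ by open sets with compact closures using local compactness, take a finite subcover by compactness of $I$, and let $U(I)$ be the finite union, whose closure is contained in the finite (hence compact) union of the individual closures. The only cosmetic difference is that the paper writes $\overline{U(I)}$ directly as the union of the $\overline{V_{x_i}}$ (valid since closure commutes with finite unions), while you pass through an inclusion and the fact that closed subsets of compact sets are compact; both routes are equally sound.
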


\begin{proof}
Fix for any $x\in I$ an open neighbourhood $U(x)$ of $x$ in $X$ such that the closure $\overline{U(x)}$ is compact. Then $\left\{U(x)\colon x\in I\right\}$ is an open cover of $I$. Since $I$ is compact, $I\subseteq U(I)=U(x_i)\cup U(x_2)\cup\cdots\cup U(x_k)$ for some finitely many $x_1,x_2,\ldots,x_k\in I$. Then the set $\overline{U(I)}=\overline{U(x_i)}\cup \overline{U(x_2)}\cup\cdots\cup \overline{U(x_k)}$ is compact.
\end{proof}

\begin{theorem}\label{theorem-3.15}
Let $S$ be a some subsemigroup of $\mathbf{I}\mathbb{N}_{\infty}$ which contains $\mathscr{C}_\mathbb{N}$.
Let $(S_I,\tau)$ be a Hausdorff locally compact semitopological semigroup, where $S_I=S\sqcup I$ and $I$ is a compact ideal of $S_I$. Then either $(S_I,\tau)$ is a compact semitopological semigroup or the ideal $I$ open.
\end{theorem}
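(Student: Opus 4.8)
The plan is to collapse the ideal $I$ to a single point and reduce to Theorem~\ref{theorem-3.11}. Let $\mathfrak{q}\colon S_I\to S_I/I$ be the natural homomorphism onto the Rees-quotient semigroup. Since $I$ is an ideal and $S$ a subsemigroup, $S_I/I$ is algebraically isomorphic to $S^{\boldsymbol 0}$, the class $[I]$ playing the role of the adjoined zero $\boldsymbol 0$ and $\mathfrak{q}$ restricting to a bijection of $S=S_I\setminus I$ onto $(S_I/I)\setminus\{[I]\}$. As $I$ is compact in the Hausdorff space $S_I$ it is closed, so $S_I\setminus I$ is open; moreover every subset of $S_I\setminus I$ is $\mathfrak{q}$-saturated, whence $\mathfrak{q}$ restricted to $S_I\setminus I$ is a homeomorphism onto its image. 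By Lemma~\ref{lemma-3.14} the quotient topology turns $S_I/I$ into a Hausdorff semitopological semigroup.

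First I would check that this quotient topology is locally compact, so that Theorem~\ref{theorem-3.11} applies. For a point distinct from $[I]$ this is immediate from the homeomorphism above and local compactness of the open subspace $S_I\setminus I$. For the point $[I]$, Lemma~\ref{lemma-3.15} provides an open neighbourhood $U(I)\supseteq I$ with compact closure $\overline{U(I)}$; since $I$ is the only non-singleton fibre of $\mathfrak{q}$, the set $U(I)$ is saturated, so $\mathfrak{q}(U(I))$ is an open neighbourhood of $[I]$ whose closure is contained in the compact set $\mathfrak{q}(\overline{U(I)})$ and is therefore compact. Thus the quotient topology is a Hausdorff locally compact shift-continuous topology on $S^{\boldsymbol 0}$, and Theorem~\ref{theorem-3.11} shows it is either discrete or compact.

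If the quotient is discrete, then $\{[I]\}$ is open in $S_I/I$ and hence $I=\mathfrak{q}^{-1}(\{[I]\})$ is open in $S_I$, which is the second alternative. If the quotient is compact, I would deduce compactness of $S_I$ directly: given an open cover of $S_I$, finitely many of its members cover the compact ideal $I$, and their union $W$ is saturated, so $\mathfrak{q}(W)$ is open and its complement $C=(S_I/I)\setminus\mathfrak{q}(W)$ is a compact subset of $S_I/I$ missing $[I]$. Then $\mathfrak{q}^{-1}(C)\subseteq S_I\setminus I$ is compact, hence covered by finitely many members of the cover; since $S_I=W\cup\mathfrak{q}^{-1}(C)$, these finitely many sets together with those covering $I$ form a finite subcover, so $S_I$ is compact.

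The one genuinely topological step is the verification that collapsing the compact ideal leaves the space locally compact and that, in the compact case, compactness lifts back along $\mathfrak{q}$; both rest on the observation that an open set containing $I$ is automatically $\mathfrak{q}$-saturated, together with Lemma~\ref{lemma-3.15}. Everything else is a formal consequence of the identification $S_I/I\cong S^{\boldsymbol 0}$ and Theorem~\ref{theorem-3.11}, so I expect the local-compactness bookkeeping (in particular controlling closures in the quotient) to be the main point requiring care.
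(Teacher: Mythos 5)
Your proposal is correct and takes essentially the same route as the paper: collapse the compact ideal via the Rees quotient (Lemmas~\ref{lemma-3.14} and~\ref{lemma-3.15}), check that the quotient topology on $S^{\boldsymbol{0}}$ is Hausdorff, locally compact and shift-continuous, and invoke the dichotomy of Theorem~\ref{theorem-3.11}. The only divergence is in how the conclusion is pulled back to $S_I$: the paper uses Corollary~\ref{corollary-3.3} (all points of $S$ are isolated) to make $U(I)$ clopen and the complement of $\pi(U(I))$ finite, whereas you lift compactness by a saturation/open-cover argument, which works equally well.
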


\begin{proof}
Suppose that $I$ is not open. By Lemma~\ref{lemma-3.14} the Rees-quotient semigroup $S_I/I$ with the quotient topology $\tau_{\operatorname{\textsf{q}}}$ is a semitopological semigroup. Let $\pi\colon S_I\to S_I/I$ be the natural homomorphism which is a quotient map. It is obvious that the Rees-quotient semigroup $S_I/I$ is isomorphic to the semigroup $S^{\boldsymbol{0}}$ and without loss of generality we can assume that $\pi(S_I)=S^{\boldsymbol{0}}$ and the image $\pi(I)$ is zero of $S^{\boldsymbol{0}}$.

By Lemma~\ref{lemma-3.15} there exists an open neighbourhood $U(I)$ of $I$ with the compact closure $\overline{U(I)}$. Since by Corollary~\ref{corollary-3.3} every point of $S$ is isolated in $(S_I,\tau)$ we have that $\overline{U(I)}=U(I)$ and its image $\pi(U(I))$ is compact-and-open neighbourhood of zero in $S^{\boldsymbol{0}}$. Since for any open neighbourhood $V(I)$ of $I$ in $(S_I,\tau)$ the set $\overline{U(I)\cap V(I)}$ is compact, Theorem~\ref{theorem-3.11} implies that $S^{\boldsymbol{0}}\setminus\pi(U(I))$ is finite for any compact-and-open neighbourhood $U(I)$ of $I$ in $(S_I,\tau)$. Then compactness of $I$ implies that $(S_I,\tau)$ is compact as well.
\end{proof}

\begin{corollary}\label{corollary-3.16}
Let $S$ be a some submonoid of the semigroup $\mathbf{I}\mathbb{N}_{\infty}$ which contains $\mathscr{C}_\mathbb{N}$.
If $(S_I,\tau)$ is a locally compact topology topological semigroup, where $S_I=S\sqcup I$ and $I$ is a compact ideal of $S_I$, then the ideal $I$ is open.
\end{corollary}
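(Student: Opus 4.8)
The plan is to obtain the corollary directly from Theorem~\ref{theorem-3.15} by excluding the compact alternative. Every topological semigroup is in particular a semitopological semigroup, so the hypotheses of Theorem~\ref{theorem-3.15} are met by $(S_I,\tau)$: it is a Hausdorff locally compact semitopological semigroup with $S_I=S\sqcup I$ and $I$ a compact ideal. Hence Theorem~\ref{theorem-3.15} gives the dichotomy that either $(S_I,\tau)$ is a compact semitopological semigroup or the ideal $I$ is open.

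First I would suppose, towards a contradiction, that $I$ is not open. Then the dichotomy forces $(S_I,\tau)$ to be compact, and since here $\tau$ is a genuine semigroup topology (i.e.\ the multiplication is jointly continuous), $(S_I,\tau)$ is a Hausdorff compact topological semigroup. Next I would note that because $S$ contains $\mathscr{C}_\mathbb{N}$ as a submonoid, the semigroup $S_I$ contains $\mathscr{C}_\mathbb{N}$, which by Remark~\ref{remark-1.1} is isomorphic to the bicyclic monoid. Thus the bicyclic monoid embeds as a subsemigroup into the Hausdorff compact topological semigroup $(S_I,\tau)$. Finally I would invoke the Anderson--Hunter--Koch result \cite{Anderson-Hunter-Koch-1965}, already used to derive Corollary~\ref{corollary-3.12}, that the bicyclic semigroup does not embed into any Hausdorff compact topological semigroup. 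This contradicts the preceding step, so the assumption that $I$ is not open is untenable, and therefore $I$ is open.

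The main (indeed, essentially the only) obstacle is to make sure the compact alternative is genuinely ruled out, and this is precisely where the strengthening from separate to joint continuity is used: without joint continuity, Theorem~\ref{theorem-3.11} shows that a compact shift-continuous topology genuinely exists on $S^{\boldsymbol{0}}$, so the compact case cannot be excluded. In the topological-semigroup setting the embedding obstruction for the bicyclic monoid closes exactly this gap, which is why the statement sharpens the semitopological dichotomy of Theorem~\ref{theorem-3.15} to the unconditional openness of $I$.
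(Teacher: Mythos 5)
Your proof is correct and follows exactly the route the paper intends: apply the dichotomy of Theorem~\ref{theorem-3.15} (valid since a topological semigroup is semitopological) and rule out the compact alternative via the Anderson--Hunter--Koch theorem, because $S_I$ contains a copy of the bicyclic monoid $\mathscr{C}_\mathbb{N}$, which cannot sit inside any Hausdorff compact topological semigroup. This is the same mechanism by which the paper derives Corollary~\ref{corollary-3.12} from Theorem~\ref{theorem-3.11}, so there is nothing to add.
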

\medskip
\section*{Acknowledgements}
The authors acknowledge  Alex Ravsky and the referees
for useful important comments and suggestions.


\begin{thebibliography}{XX}
\bibitem{Anderson-Hunter-Koch-1965}
L.~W.~Anderson, R.~P.~Hunter, and R.~J.~Koch,
\emph{Some results on stability in semigroups}.
Trans. Amer. Math. Soc. {\bf 117} (1965), 521--529.



\bibitem{Banakh-Bardyla-Guran-Gutik-Ravsky-2020}
T. Banakh, S. Bardyla, I. Guran, O. Gutik, and A. Ravsky,
\emph{Positive answers for Koch's problem in special cases},
Topol. Algebra Appl. \textbf{8} (2020),  76-87.

\bibitem{Banakh-Dimitrova-Gutik-2009}
T.~Banakh, S.~Dimitrova, and O.~Gutik,
\emph{The Rees-Suschkiewitsch Theorem for simple
topological semigroups}, Mat. Stud. \textbf{31}  (2009), no. 2, 211--218.

\bibitem{Banakh-Dimitrova-Gutik-2010}
T.~Banakh, S.~Dimitrova, and O.~Gutik,
\emph{Embedding the bicyclic semigroup into countably compact topological semigroups},
Topology Appl. \textbf{157} (2010), no.~18, 2803--2814.

\bibitem{Bardyla-2016}
S. Bardyla,
\emph{Classifying locally compact semitopological polycyclic monoids},
Mat. Visn. Nauk. Tov. Im. Shevchenka \textbf{13} (2016), 21--28.

\bibitem{Bardyla-2018}
S. Bardyla,
\emph{On locally compact semitopological graph inverse semigroups},
Mat. Stud. \textbf{49} (2018), no. 1, 19--28.

\bibitem{Bardyla=2021??}
S. Bardyla,
\emph{On topological McAlister semigroups},
J. Pure Appl. Algebra \textbf{227} (2023), no.~4, 107274.

\bibitem{Bardyla-Ravsky-2019}
S. Bardyla and A. Ravsky,
\emph{Closed subsets of compact-like topological spaces},
Appl. Gen. Topol. \textbf{21} (2020), no. 2, 201--214.


\bibitem{Bertman-West-1976}
M.~O.~Bertman and T.~T.~West,
{\it Conditionally compact bicyclic semitopological semigroups},
Proc. Roy. Irish Acad. {\bf A76} (1976), no. 21--23, 219--226.


\bibitem{Bezushchak-2004}
O. Bezushchak,
\emph{On growth of the inverse semigroup of partially defined co-finite automorphisms of integers},
Algebra Discrete Math. (2004), no.~2, 45--55.


\bibitem{Bezushchak-2008}
O. Bezushchak,
\emph{Green's relations of the inverse semigroup of partially defined cofinite isometries of discrete line},
Visn., Ser. Fiz.-Mat. Nauky, Kyiv. Univ. Im. Tarasa Shevchenka (2008), no.~1, 12--16.


\bibitem{Carruth-Hildebrant-Koch-1983}
J.~H.~Carruth, J.~A.~Hildebrant, and  R.~J.~Koch,
\emph{The Theory of Topological Semigroups}, Vol. I, Marcel
Dekker, Inc., New York and Basel, 1983.

\bibitem{Carruth-Hildebrant-Koch-1986}
J.~H.~Carruth, J.~A.~Hildebrant, and  R.~J.~Koch,
\emph{The Theory of Topological Semigroups}, Vol. II, Marcel Dekker,
Inc., New York and Basel, 1986.


\bibitem{Chuchman-Gutik-2010}
I. Ya. Chuchman and O. V. Gutik,
\emph{Topological monoids of almost monotone injective co-finite partial selfmaps of the set of positive integers}.
Carpathian Math. Publ. \textbf{2} (2010), no.~1, 119--132.


\bibitem{Clifford-Preston-1961}
A. H.~Clifford and  G. B.~Preston,
\emph{The Algebraic Theory of Semigroups},
Vol. I., Amer. Math. Soc. Surveys 7, Pro\-vidence, R.I., 1961.

\bibitem{Clifford-Preston-1967}
A. H.~Clifford and  G. B.~Preston,
\emph{The Algebraic Theory of Semigroups},
Vol. II., Amer. Math. Soc. Surveys 7, Providence, R.I., 1967.


\bibitem{Eberhart-Selden-1969}
C. Eberhart and J. Selden,
\emph{ On the closure of the bicyclic semigroup},
Trans. Amer. Math. Soc. {\bf 144} (1969), 115--126.

\bibitem{Engelking-1989}
R.~Engelking,
\emph{General Topology}, 2nd ed.,
Heldermann, Berlin, 1989.


\bibitem{Guran-Kisil-2012}
I. Guran and M. Kisil',
\emph{Pontryagin's alternative for locally compact cancellative monoids},
Visnyk Lviv Univ. Ser. Mech. Math. \textbf{77} (2012), 84--88 (in Ukrainian).

\bibitem{Gutik-2015}
O. Gutik,
\emph{On the dichotomy of a locally compact semitopological bicyclic monoid with adjoined zero},
Visnyk L'viv Univ., Ser. Mech.-Math. \textbf{80} (2015), 33--41.

\bibitem{Gutik-P.Khylynskyi-2021}
O. Gutik and P. Khylynskyi,
\emph{On the monoid of cofinite partial isometries of $\mathbb{N}$ with a bounded finite noise},
Proceedings of the Contemporary Mathematics in Kielce 2020, ed. Szymon Walczak. Jan Kochanowski University in Kielce, Poland. February 24-27, 2021. Sciendo, De Gruyter Poland Sp. z o.o. Warsaw, Poland, 2021, P. 127--144.

\bibitem{Gutik-Maksymyk-2019}
O. V. Gutik and K. M. Maksymyk,
\emph{On a semitopological extended bicyclic semigroup with adjoined zero},
Mat. Metody Fiz.-Mekh. Polya \textbf{62} (2019), no. 4, 28--38.

\bibitem{Gutik-Repovs-2007}
O.~Gutik and D.~Repov\v{s},
\emph{On countably compact $0$-simple topological inverse semigroups},
Semigroup Forum \textbf{75} (2007), no.~2, 464--469.


\bibitem{Gutik-Repovs-2011}
O.~Gutik and D.~Repov\v{s},
\emph{Topological monoids of monotone, injective partial selfmaps of $\mathbb{N}$ having cofinite domain and image},
Stud. Sci. Math. Hungar. \textbf{48} (2011), no.~3, 342--353.

\bibitem{Gutik-Repovs-2012}
O.~Gutik and D.~Repov\v{s},
\emph{On monoids of injective partial selfmaps of integers with cofinite domains and images},
Georgian Math. J. \textbf{19} (2012),  no.~3, 511--532.

\bibitem{Gutik-Repovs-2015}
O.~Gutik and D.~Repov\v{s},
\emph{On monoids of injective partial cofinite selfmaps},
Math. Slovaca \textbf{65} (2015), no.~5,  981--992.


\bibitem{Gutik-Savchuk-2017}
O.~Gutik and A.~Savchuk,
\emph{On the semigroup $\mathbf{ID}_{\infty}$,}
Visn. Lviv. Univ., Ser. Mekh.-Mat. \textbf{83} (2017), 5--19 (in Ukrainian).

\bibitem{Gutik-Savchuk-2018}
O.~Gutik and A.~Savchuk,
\emph{The semigroup of partial co-finite isometries of positive integers,}
 Bukovyn. Mat. Zh. \textbf{6} (2018), no. 1--2,  42--51 (in Ukrainian).

\bibitem{Gutik-Savchuk-2019}
O. Gutik and A. Savchuk,
\emph{On inverse submonoids of the monoid of almost monotone injective co-finite partial selfmaps of positive integers},
Carpathian Math. Publ. \textbf{11} (2019), no. 2, 296--310. 

\bibitem{Gutik-Savchuk-2020}
O. Gutik and A. Savchuk,
\emph{On the monoid of cofinite partial isometries of $\mathbb{N}$ with the usual metric},
Visn. Lviv. Univ., Ser. Mekh.-Mat. \textbf{89} (2020), 17--30.

\bibitem{Haworth-McCoy-1977}
R. C. Haworth and R. A. McCoy,
\emph{Baire spaces},
Dissertationes Math.  \textbf{141}, Warszawa (1977), 73pp.

\bibitem{Hewitt-1956}
E. Hewitt,
\emph{Compact monothetic semigroups},
Duke Math. J. \textbf{23} (1956), no. 3, 447--457.


\bibitem{Hildebrant-Koch-1986}
J.~A.~Hildebrant and R.~J.~Koch,
{\it Swelling actions of $\Gamma$-compact semigroups}, Semigroup
Forum {\bf 33} (1986), 65--85.

\bibitem{Hofmann-1960}
K. H.~Hofmann,
\emph{Topologische Halbgruppen mit dichter submonoger Untenhalbgruppe},
Math. Zeit. \textbf{74} (1960), 232--276.

\bibitem{Hofmann-Mostert-1966}
K. H. Hofmann and P. S. Mostert,
\emph{Elements of compact semigroups},
Co\-lum\-bus: Chas. E. Merrill Co., 1966.


\bibitem{Koch-1957}
R.~J.~Koch,
\emph{On monothetic semigroups},
Proc. Amer. Math. Soc. \textbf{8} (1957), no. 2, 397--401.





\bibitem{Koch-Wallace-1957}
R.~J.~Koch and A.~D.~Wallace,
\emph{Stability in semigroups},
Duke Math. J. \textbf{24} (1957), no. 2, 193--195.


\bibitem{Lawson-1998}
M.~Lawson,
\emph{Inverse Semigroups. The Theory of Partial Symmetries},
Singapore: World Scientific, 1998.



\bibitem{Mokrytskyi-2019}
T. Mokrytskyi,
\emph{On the dichotomy of a locally compact semitopological monoid of order isomorphisms between principal filters of $\mathbb{N}^n$ with adjoined zero},
Visn. Lviv Univ., Ser. Mekh.-Mat. \textbf{87} (2019), 37--45.



\bibitem{Numakura-1952}
K. Numakura,
\emph{On bicompact semigroups},
Math. J. Okayama Univ. \textbf{1} (1952), 99--108.



\bibitem{Petrich-1984}
M.~Petrich,
\emph{Inverse Semigroups},
John Wiley $\&$ Sons, New York, 1984.


\bibitem{Reilly-1966}
N. R. Reilly,
\emph{Bisimple $\omega$-semigroups},
Proc. Glasg. Math. Assoc. \textbf{7} (1966), no. 3, 160--167.

\bibitem{Ruppert-1984}
W.~Ruppert,
\emph{Compact Semitopological Semigroups: An Intrinsic Theory},
Lect. Notes Math., \textbf{1079}, Springer, Berlin, 1984.



\bibitem{Wagner-1952}
V.~V. Wagner,
\textit{Generalized groups},
Dokl. Akad. Nauk SSSR \textbf{84} (1952), 1119--1122 (in Russian).


\bibitem{Weil-1938}
A. Weil.
\textit{L'integration dans les groupes lopologiques et ses applications},
Actualites Scientifiques No. 869, Hermann, Paris, 1938.

\bibitem{Zelenyuk-1988}
E. G. Zelenyuk,
\textit{On Pontryagin's alternative for topological semigroups},
Mat. Zametki \textbf{44} (1988), no. 3, 402--403 (in Russian).

\bibitem{Zelenyuk-2019}
Ye. Zelenyuk,
\emph{A locally compact noncompact monothetic semigroup with identity},
Fund. Math. \textbf{245} (2019), no. 1, 101--107.

\bibitem{Zelenyuk-2020}
Ye. Zelenyuk,
\emph{Larger locally compact monothetic semigroups},
Semigroup Forum \textbf{100} (2020), no. 2, 605--616.

\bibitem{Zelenyuk-Zelenyuk-2020}
Ye. Zelenyuk and Yu. Zelenyuk,
\emph{When a locally compact monothetic semigroup is compact},
J. Group Theory \textbf{23} (2020), no. 6, 983--989.

\end{thebibliography}
\end{document}